\newtheorem{thm}{Theorem}[section]
\newtheorem{corollary}[thm]{Corollary}
\newtheorem{lemma}[thm]{Lemma}
\newtheorem{proposition}[thm]{Proposition}
\newtheorem{prop}[thm]{Proposition}
\newtheorem{thm-dfn}[thm]{Theorem-Definition}
\theoremstyle{definition}
\newtheorem{remark}[thm]{Remark}
\newtheorem{example}[thm]{Example}
\newtheorem{fact}[thm]{Fact}
\numberwithin{equation}{section}
\newcommand{\fg}{{\mathfrak g}}
\newcommand{\fa}{{\mathfrak a}}
\newcommand{\fc}{{\mathfrak c}}
\newcommand{\fF}{{\mathfrak{F}}}
\newcommand{\Lg}{{\mathfrak g}}
\newcommand{\rW}{{\mathrm W}}
\newcommand{\bC}{{\mathbb C}}
\newcommand{\bZ}{{\mathbb Z}}
\newcommand{\bbP}{{\mathbb P}}
\newcommand{\bP}{{\mathbb P}}
\newcommand{\mE}{\mathcal{E}}
\newcommand{\mF}{\mathcal{F}}
\newcommand{\mO}{\mathcal{O}}
\newcommand{\mL}{\mathcal{L}}
\newcommand{\mH}{\mathcal{H}}
\newcommand{\mN}{\mathcal{N}}
\newcommand{\mB}{\mathcal{B}}
\newcommand{\calF}{{\mathcal F}}
\newcommand{\calL}{{\mathcal L}}
\newcommand{\cB}{{\mathcal B}}
\newcommand{\cO}{{\mathcal O}}
\newcommand{\cF}{{\mathcal F}}
\newcommand{\cN}{{\mathcal N}}
\newcommand{\cP}{{\mathcal P}}
\newcommand{\cE}{{\mathcal E}}
\newcommand{\on}{\operatorname}
\newcommand{\tQ}{\widetilde Q}
\newcommand{\ra}{\rightarrow}
\newcommand{\is}{\simeq}
\newcommand{\Loc}{\on{LocSys}}
\newcommand{\quash}[1]{}  
\newcommand{\nc}{\newcommand}
\nc{\al}{{\alpha}} \nc{\be}{{\beta}} \nc{\ga}{{\gamma}}
\nc{\ve}{{\varepsilon}} \nc{\Ga}{{\Gamma}} 
\nc{\La}{{\Lambda}}
\nc{\ad }{{\on{ad }}}
\nc{\aff}{{\on{aff}}} \nc{\Aff}{{\mathbf{Aff}}}
\nc{\der}{{\on{der}}}
\nc{\diag}{{\on{diag}}}
\nc{\Fl}{{\calF\ell}}
\nc{\Hg}{{\on{Higgs}}}
\newcommand{\id}{{\on{id}}}
\nc{\Id}{{\on{Id}}}
\nc{\Ind}{{\on{Ind}}}
\newcommand{\Lie}{{\on{Lie}}}
\nc{\Op}{{\on{Op}}}
\nc{\res}{{\on{res}}}
\nc{\tr}{{\on{tr}}}
\nc{\GSp}{{\on{GSp}}} \nc{\GU}{{\on{GU}}} \nc{\SL}{{\on{SL}}}
\nc{\SU}{{\on{SU}}} \nc{\SO}{{\on{SO}}}
\newcommand{\ars}{{\mathfrak a^{rs}}}
\newcommand{\grs}{{\mathfrak g_1^{rs}}}
\nc{\nh}{{\Loc_{J^p}(\tau')}}
\nc{\bnh}{{\Loc_{\breve J^p}(\tau')}}
\nc{\bU}{{\overline{U}}} 
\nc{\IC}{{\on{IC}}}
\newcommand{\Jac}{\mathrm{Jac}}
\newcommand{\br}{\begin{rouge}}
\newcommand{\er}{\end{rouge}}
\newcommand{\bb}{\begin{bluet}}
\newcommand{\eb}{\end{bluet}}
\newcommand{\p}{\perp}
\nc{\ot}{\otimes}
\nc{\oh}{{\operatorname{H}}}
\nc{\gr}{{\operatorname{gr}}}
\nc{\rk}{{\operatorname{rank}}}
\nc{\codim}{{\operatorname{codim}}}
\nc{\img}{{\operatorname{Im}}}
\nc{\Span}{{\operatorname{Span}}}
\nc{\Img}{\operatorname{Im}}
\newcommand{\beqn}{\begin{equation*}}
\newcommand{\eeqn}{\end{equation*}}
\newcommand{\beq}{\begin{equation}}
\newcommand{\eeq}{\end{equation}}
\newcommand{\bern}{\begin{eqnarray*}}
\newcommand{\eern}{\end{eqnarray*}}
\newcommand{\inv}{{\mathbin{/\mkern-4mu/}}}
\@date \else {\vskip1ex\footnotesize \centering\@date\par\vskip1ex}\fi
\else \@footnotetext{\@setdate}\fi}
\begin{document}
\title[Springer correspondence and 
cohomology of Fano varieties]{Springer correspondence, hyperelliptic curves, and 
cohomology of Fano varieties
}
        \author{Tsao-Hsien Chen}
        \address{Department of Mathematics, University of Chicago, Chicago, 60637, USA
         }
        \email{chenth@math.uchicago.edu}
        \thanks{Tsao-Hsien Chen was partially supported by NSF grant DMS-1702337 and DMS-2001257.}
        \author{Kari Vilonen}
        \address{School of Mathematics and Statistics, University of Melbourne, Australia and Department of Mathematics and Statistics, University of Helsinki, Finland}
         \email{kari.vilonen@unimelb.edu.au}
         \thanks{Kari Vilonen was supported in part by the ARC grants DP150103525 and DP180101445,  the Academy of Finland, the Humboldt Foundation,  the Simons Foundation, and the NSF grant DMS-1402928.}
         \author{Ting Xue}
         \address{ School of Mathematics and Statistics, University of Melbourne, Australia and Department of Mathematics and Statistics, University of Helsinki, Finland}
         \email{ting.xue@unimelb.edu.au}
\thanks{Ting Xue was supported in part by the ARC grants DP150103525 and DE160100975 and the Academy of Finland.}

\begin{abstract}
In~\cite{CVX3}, we have established a Springer theory  for the symmetric pair 
$(\on{SL}(N),\on{SO}(N))$. In this setting  we obtain representations of (the Tits extension) of the braid group rather than just Weyl group representations. These representations arise from cohomology of families of certain (Hessenberg) varieties. In this paper we determine the Springer correspondence explicitly for IC sheaves supported on order 2 nilpotent orbits.  In this process we encounter universal families of hyperelliptic curves. As an application we calculate  the cohomolgy of Fano varieties of $k$-planes in the smooth intersection of two quadrics in an even dimensional projective space.
\end{abstract}
\maketitle

\setcounter{tocdepth}{1} \tableofcontents
\section{Introduction}

In~\cite{CVX3} we have established a Springer correspondence for the symmetric pairs \linebreak$(\on{SL}(N), \on{SO}(N))$. More precisely, let $G=\on{SL}(N)$, $\Lg=\on{Lie}G=\mathfrak{sl}(N)$, and $\theta$ an involution of $G$ such that $K=G^\theta=\on{SO}(N)$.  The involution $\theta$ gives rise to a decomposition $\fg=\fg_0\oplus\fg_1$,  where $d\theta|_{\fg_i}=(-1)^i$, $i=0,1$. Let $\cN$ be the nilpotent cone of $\Lg$ and let $\cN_1=\cN\cap\fg_1$. In~\cite{CVX3} we determine the set of Fourier transforms of all irreducible $K$-equivariant perverse sheaves on $\cN_1$, i.e., the set of Fourier transforms $\fF(\on{IC}(\cO,\cE))$, where $\cO$ is a nilpotent $K$-orbit in $\cN_1$, $\cE$ is an irreducible $K$-equivariant local system on $\cO$, and $\fF:\on{Perv}_K(\Lg_1)\to\on{Perv}_K(\Lg_1)$ is the Fourier transform functor (where we have identified $\Lg_1$ with $\Lg_1^*$ via the Killing form). 
Several things remain to be done. 
For example, we do not know the Fourier transform of a specific $\on{IC}(\cO,\cE)$ in general, although we have some partial conjectures~\cite[Conjecture 7.1]{CVX3}. 
Also, very little is known about 
the geometric properties of the Fourier transforms $\fF(\on{IC}(\cO,\cE))$ (i.e. the character sheaves)
and the work in \emph{loc. cit.} suggests that the geometric properties of $\fF(\on{IC}(\cO,\cE))$ 
are very different from those in the group case.

In this paper we
explore the geometry of the Fourier transforms $\fF(\on{IC}(\cO,\cE))$ 
in the simplest nontrivial case, that is, in the case when $\cO$ is an order 2 nilpotent orbit \footnote{ 
A nonzero nilpotent orbit $\cO$ is called of order 2 if $x^2=0$ for all $x\in\cO$.}.
We show that those Fourier transforms involve cohomology of a beautiful family of non-rational varieties 
and we use the geometry of those varieties
to determine $\fF(\on{IC}(\cO,\cE))$ for 
all order 2 nilpotent orbits $\cO$. 
As an application, we compute the cohomology of Fano varieties of $k$-planes in the smooth intersection of two quadrics in an even dimensional projective space. When the ambient projective space is odd dimensional, the cohomology of such Fano varieties has been computed in~\cite{CVX2} making use of the Springer correspondence in the case of $N$ even.

Let us assume that $N=2n+1$. We show that for trivial local systems on order 2 nilpotent orbits the Fourier transforms are IC sheaves arising from representations of the Tits group $A[2]\rtimes S_N$, which do not factor through $S_N$; see Theorem~\ref{matching}. Here $A[2]$ is the set of order $2$ elements in a fixed maximal $\theta$-split torus $A$ of $G$. In the case of non-trivial local systems the Fourier transforms are IC sheaves arising from representations of the braid group $B_N$ on the cohomology of  a universal family of hyperelliptic curves; see Theorem~\ref{non-trivial even}.

As an application we obtain an explicit formula for the cohomology of the Fano varieties $\on{Fano}_{i}^{2n}$ of $i$-planes in the smooth complete intersection of two quadrics in an even dimensional projective space $\bP^{2n}$. This is an example of the following general strategy (see also \cite{CVX2}). 
Consider pairs of maps $(\pi,\check \pi)$ and diagrams as follows: 
\beq
\label{advanced springer diagram}
\begin{CD}
\mathcal{X}  @>>> \check{\mathcal{X}}
\\
@V{\pi}VV  @VV{\check\pi}V
\\
\cN_1  @>>>  \fg_1
\end{CD}
\eeq
where $\mathcal{X}$ and  $\check{\mathcal{X}}$ are certain families of Hessenberg varieties, see~\cite{CVX1}. In particular, $\mathcal{X}$ and  $\check{\mathcal{X}}$ are orthogonal vector bundles over a partial flag manifold $\cP$ of $K$ in the trivial bundle $\cP\times\fg_1$ so that, by functoriality of the Fourier transform, we have 
\beq
\label{basic decomposition}
\fF(\pi_*\bC_{\mathcal{X}}) \cong \check\pi_*\bC_{\check{\mathcal{X}}}\,\ \ \text{(up to shift)}.
\eeq  
In general the fibers of $\check{\pi}$ are more complicated than those of $\pi$. We make use of  the following strategy:
\beq
\begin{gathered}
\text{The computation of the cohomology of the general fiber of $\check{\mathcal{X}}$ can be}
\\
\text{reduced, via the Fourier transform, to the analysis of the boundary family  ${\mathcal{X}}$.}
\end{gathered}
\eeq
For this to work, we need to know the Fourier transforms of the IC sheaves  that occur in the decomposition of $\pi_*\bC_{\mathcal{X}}$. 
In our setting
we observe that the Fano varieties  
$\on{Fano}_{i}^{2n}$ appear as general fibers of a certain  
families of Hessenberg varieties $\check{\mathcal{X}}$ and   
by applying the principle above we obtain the following theorem. Let us write $${g_{k,m}(q)=\frac{\prod_{l=m-k+1}^m(1-q^l)}{\prod_{l=1}^k(1-q^l)}}$$ for the Poincare polynomial of the Grassmannian of $k$-dimensional subspaces in $\bC^m$. Then:
 \begin{thm}[Theorem~\ref{thm-fano}]\label{}
We have
\beqn
H^{2k+1}(\on{Fano}_{i-1}^{2n},\bC)=0,
\eeqn
\beqn
H^{2k}(\on{Fano}_{i-1}^{2n},\bC)\cong\bigoplus_{j=0}^i L_j^{\oplus M_i(k,j)},
\eeqn
where $M_i(k,j)$ is the coefficient of $q^{k-j(n-i)}$ in $g_{i-j,2n-i-j}(q)$ and the $L_j$ are vector spaces of dimension ${\binom{2n+1} {j}}$\,.
\end{thm}

\begin{remark}
It follows from the proof that there are natural actions of the 
braid group on the 
cohomology $H^{2k}(\on{Fano}_{i-1}^{2n},\bC)$ and the vector space $L_j$ and the isomorphism in the theorem is compatible with those actions.
\end{remark}
\begin{remark}
The Fano varieties $\on{Fano}_{i}^{2n}$ have a concrete interpretation as moduli space of parabolic vector bundles (with extra structure) on $\bbP^1$ (see \cite{C}). Thus the theorem above provides a method to compute the
cohomology of those moduli spaces.
\end{remark}

To prove our main theorems we  establish  and make use of an interesting isomorphism of the cohomology of the stalks of IC sheaves attached to nilpotent orbits of order 2 and the cohomology of stalks of IC sheaves attached to nilpotent orbits in the symplectic Lie algebra $\mathfrak{sp}(2n)$ in the classical case. This is done in \S\ref{sec-stalks}. The proofs of the results in this section are independent of the other sections and  the results are used in \S\ref{sec orbits of order 2} and in \S\ref{sec-Fano}.

{\bf Acknowledgements.} We thank Carl Mautner for useful discussions at the beginning of the project. We also thank the Max Planck Institute for Mathematics in Bonn and the Mathematical Sciences Research Institute in Berkeley for support, hospitality, and nice research environment. Furthermore KV and TX thank the Research Institute for Mathematical Sciences in Kyoto for support, hospitality, and nice research environment. We also thank Cheng-Chiang Tsai, Zhiwei Yun and Xinwen Zhu for their interest in this work. In addition,  TX would like to thank George Lusztig for suggesting her to study the topics in this paper and for helpful discussions about fake degrees. 
The authors would also like to thank the anonymous referees for helpful comments.
Finally, special thanks are due to Dennis Stanton for suppling a combinatorial argument used in section 6.

\section{Springer correspondence}\label{order 2}
In this section we state our main results on Fourier transforms 
of IC complexes supported on nilpotent orbits of order two.

\subsection{Notations} 
Let $G=\on{SL}(N,\bC)$ and $\theta:G\to G$ an involution such that 
$K=G^\theta=\on{SO}(N,\bC)$.  We also write $(G,K) = (\on{SL}(V),\on{SO}(V,Q))$, where $\dim\,V = N$.  We think of $Q$ concretely as a non-degenerate quadratic form on $V$ and we write $\langle\ ,\rangle_Q$ for the non-degenerate bilinear form on $V$ associated to $Q$. The involution $\theta$ induces a grading on $\Lg=\on{Lie}\,G$, i.e. $\Lg=\Lg_0\oplus\Lg_1$, where $d\theta|_{\Lg_i}=(-1)^i$. If we diagonalize $Q$ then the Cartan involution $\theta$ is given by $g\mapsto (g^t)^{-1}$ and $\Lg_1$ consists of symmetric matrices. 

We write $A$ for a maximal $\theta$-split torus of $G$, where $\theta$-split means that $\theta(t)=t^{-1}$ for $t\in A$. The pair $(G,K)$ is a split pair, i.e., $A$ is a maximal torus of $G$.  We write $\Lg^{rs}$ for the regular semi-simple elements in $\Lg$ and we let $\Lg_1^{rs}=\Lg_1\cap\Lg^{rs}$. Furthermore, 
let $\fa=\on{Lie}A$ and $\fa^{rs}=\fa\cap\Lg^{rs}$. Then $\fa$ is a maximal abelian subspace in $\Lg_1$. 

Let us write $\cN$ for the nilpotent cone of $\Lg$ and we let $\cN_1=\cN\cap\Lg_1$ stand for  the nilpotent cone of $\Lg_1$. 
Assume that $N=2n+1$. The $K$-orbits on $\cN_1$ are parametrized by partitions $\lambda$ of $2n+1$ and 
we write $\mO_\lambda$ for the $K$-orbit corresponding to $\lambda$ (see \cite{S}). For $i\in[0,n]$, let $2^i1^{2n+1-2i}$ denote the partition where the part $2$ has multiplicity $i$, and the part $1$ has multiplicity  $2n+1-2i$.
The order $2$ nilpotent orbits are $\cO_{2^i1^{2n+1-2i}}$, $i\in[1,n]$.  For $x\in\cO_{2^i1^{2n+1-2i}}$, $i\in[1,n]$, we have that $A_K(x):=Z_K(x)/Z_K(x)^0=\bZ/2\bZ$. Thus each orbit $\cO_{2^i1^{2n+1-2i}}$ carries a unique non-trivial irreducible $K$-equivariant local system, $i\in[1,n]$; we denote it by $\cE_i$.

By~\cite[Corollary 4.9]{CVX3}, we know that both $\fF(\on{IC}(\cO_{2^i1^{2n+1-2i}},\bC))$ and $\fF(\on{IC}(\cO_{2^i1^{2n+1-2i}},\cE_i))$ are of the form $\on{IC}(\Lg_1^{rs},-)$, where $-$ is an irreducible $K$-equivariant local system on $\Lg_1^{rs}$ corresponding to an irreducible representation of $\pi_1^K(\Lg_1^{rs})=\pi_1^K(\Lg_1^{rs},a)$, $a\in\fa^{rs}$, the equivariant fundamental group.

\subsection{The equivariant fundamental group}Let us describe the equivariant fundamental group $\pi_1^K(\Lg_1^{rs},a)$ concretely. In this subsection $N$ is arbitrary. 

Let $\rW_1=N_K(A)/Z_K(A)$ be the `baby Weyl group'. Recall that we have
\begin{lemma}[\cite{KR}]\label{lemma-involution}
 The natural inclusion $\fa\hookrightarrow\fg_1$ induces 
an isomorphism $k[\fa]^{\rW_1}=k[\fg_1]^K$.
We have the 
relative Chevalley map $f:\fg_1\ra\fg_1\inv K\is\fa\slash \rW_1:=\fc$. 
\end{lemma}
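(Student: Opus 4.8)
This lemma is the Chevalley restriction theorem for the symmetric pair, and I would prove it by organizing the Kostant--Rallis structure theory, taking as input the standard structural facts (which I may invoke, since the lemma is attributed to those authors): every $x\in\fg_1$ has a Jordan decomposition $x=x_s+x_n$ with $x_s,x_n\in\fg_1$, $x_s$ semisimple, $x_n$ nilpotent; every semisimple element of $\fg_1$ is $K$-conjugate into $\fa$; two elements of $\fa$ are $K$-conjugate if and only if they lie in one $\rW_1$-orbit; $\rW_1$ is a finite reflection group on $\fa$; and $K$ is reductive. Restriction of functions along $\fa\hookrightarrow\fg_1$ visibly carries $k[\fg_1]^K$ into $k[\fa]^{\rW_1}$, giving a homomorphism $\rho\colon k[\fg_1]^K\to k[\fa]^{\rW_1}$; the whole content is that $\rho$ is an isomorphism, equivalently that the dual morphism $p\colon \fa/\rW_1\to\fg_1\sslash K:=\Spec k[\fg_1]^K$ is an isomorphism, and $\chi$ is then the composite $\fg_1\to\fg_1\sslash K\is\fa/\rW_1=\fc$.

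First I would prove $\rho$ injective. The key point is that any $f\in k[\fg_1]^K$ satisfies $f(x)=f(x_s)$: exactly as in the classical Lie algebra case, an $\mathfrak{sl}_2$-triple through $x_n$ inside the $\theta$-stable reductive subalgebra $\fz_\fg(x_s)$ yields a cocharacter of $Z_K(x_s)$ contracting $x_n$ to $0$ while fixing $x_s$, so $x_s\in\overline{K\cdot x}$. Hence a $K$-invariant function is determined by its restriction to the semisimple locus of $\fg_1$, and since every semisimple element lies in $K\cdot\fa$, any $f$ with $f|_\fa=0$ vanishes identically.

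For surjectivity I would argue geometrically. Both $\fa/\rW_1$ and $\fg_1\sslash K$ are irreducible affine varieties of dimension $\dim\fa$ (the latter because the regular semisimple locus of $\fg_1$ is open and dense and maps onto the image of $\fa$), and $p$ is finite ($k[\fa]$ being module-finite over $k[\fa]^{\rW_1}$, and $k[\fa]^{\rW_1}$ integral over the image of $\rho$ by the injectivity just shown together with the fact that $k[\fg_1]^K$ separates closed $K$-orbits). Moreover $p$ is surjective on closed points, since every point of $\fg_1\sslash K$ is $\chi(x)=\chi(x_s)$ with $x_s\in K\cdot\fa$, and it is birational, since over a regular semisimple value the fibre of $\chi$ is a single $K$-orbit meeting $\fa$ in a single $\rW_1$-orbit. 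Because $K$ is reductive, $\fg_1\sslash K$ is normal, and (over $\bC$) a finite birational morphism onto a normal variety is an isomorphism; this gives $\rho$ bijective. Finally, as $\rW_1$ is a reflection group, $k[\fa]^{\rW_1}$ is a polynomial ring, so $\fc$ is an affine space and $\chi$ is the asserted relative Chevalley quotient.

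The main obstacle is this surjectivity step, and within it the structural inputs that every semisimple element of $\fg_1$ is $K$-conjugate into $\fa$ and that all such maximal subspaces are $K$-conjugate: proving these is the technical heart of the Kostant--Rallis theory (density of regular semisimple elements in $\fg_1$, maximality of $\fa$ among subspaces of commuting semisimple elements, and a slice argument using reductivity of $K$), and it is the only place where one uses more than formal properties of the $\bZ/2$-grading. Once these are available, the reduction of surjectivity to normality of $\fg_1\sslash K$ is clean in characteristic $0$; in characteristic $\neq 2$ one additionally needs separability of $p$, which follows from $\rW_1$ acting as a reflection group (the refinement due to Levy).
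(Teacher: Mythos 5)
The paper gives no proof of this lemma at all: it is recalled as a known result, attributed to Kostant--Rallis (over $\bC$) and to Levy in positive characteristic, so there is no internal argument to compare yours against. Your outline is essentially the standard Chevalley restriction argument from those references, and it is sound given the structural inputs you explicitly flag (Jordan decomposition inside $\fg_1$, $K$-conjugacy of semisimple elements of $\fg_1$ into $\fa$, $K$-conjugacy on $\fa$ governed by $\rW_1$, reductivity of $K$) -- these inputs are precisely the content of the cited papers, so assuming them is legitimate here. Two refinements are worth making. In the injectivity step, the contraction $x_s\in\overline{K\cdot x}$ must be implemented by a one-parameter subgroup of $K$ itself; this works because the semisimple member $h$ of a normal $\mathfrak{sl}_2$-triple through $x_n$ lies in $\fg_0\cap\fz_{\fg}(x_s)$, so the cocharacter it generates sits in $Z_K(x_s)$ -- this is the only place $\theta$-stability of the triple is used and should be said. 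More importantly, your justification of finiteness of $p\colon\fa/\rW_1\to\fg_1\sslash K$ (that $k[\fg_1]^K$ separates closed orbits) does not by itself yield integrality of $k[\fa]^{\rW_1}$ over the image of $\rho$; the clean fix is to observe that every element of $\fa$ is semisimple in $\fg$, so the composite $\fa\hookrightarrow\fg\to\fg\sslash G$ is finite (it factors through the finite quotient of a Cartan subalgebra containing $\fa$), and since this composite also factors through $\fg_1\sslash K$, the ring $k[\fa]$, hence its subring $k[\fa]^{\rW_1}$, is finite over $\rho(k[\fg_1]^K)$. With that repaired, finite plus birational onto the normal variety $\fg_1\sslash K$ gives the isomorphism as you say, and the separability caveat you raise in characteristic $p$ is exactly Levy's contribution.
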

Let $W$ be the Weyl group of $G$. In our setting, we have $\rW_1=W=S_N$. Let us write  
$c\in\fc^{rs}$ for the  image of $a$ under $\ars\ra \fc^{rs}=\ars/W$.  Now,  $B_W:=\pi_1(\fc^{rs},c)=B_N$ is the classical braid group. Then the relative Chevalley map $f:\fg_1\ra \fc$  gives rise to the following commutative diagram with exact rows:
\beq
\begin{CD}
0 @>>> Z_K(a)@>>>\pi_1^K(\fg_1^{rs},a)@>>> B_W@>>> 0
\\
@. @| @VVV @VVV @.
\\
0 @>>> Z_K(a)@>>>\widetilde W=N_K(A)@>>> W@>>> 0\,.
\end{CD}
\eeq
Note that $\widetilde W$ is  the Tits group.
The group $B_W$ acts on $Z_K(a)$ through the quotient $B_W\ra W$. We have that  $Z_K(a) \is A[2]$, the group of order 2 elements in the split torus $A$.
Choosing a section we can split the short exact sequence in the first row and obtain 
\[\pi_1^K(\fg_1^{rs},a)\is A[2]\rtimes B_W.\]
However, note that  the second row in the diagram is not split in general.
In the case when $N$ is odd the lower exact sequence also splits and we have 
\[\widetilde W=N_K(A)\is A[2]\rtimes S_N\,.\]

For the rest of the paper, we will assume $N$ is odd.

\subsection{The local systems $\calL_i$ and $\cF_i$ and the statements of theorems}\label{sec-the local systems}
The local systems on $\fg_1^{rs}$ which are obtained from the Fourier transforms of $\IC(\mO_{2^i1^{2n-2i+1}},\bC)$ turn out to be representations of the Tits group $\widetilde W$. They are the irreducible representations which occur in $\bC[A[2]]$ viewed as a representation of $\widetilde W$. First, let us write $L_\chi$ for the representation of $A[2]$ associated to the character $\chi\in A[2]^\vee$ and so we have 
\beqn
\bC[A[2]] \ = \ \bigoplus_{\chi\in A[2]^\vee} L_\chi
\eeqn
The space $A[2]^\vee$ breaks into orbits $\{\Lambda_i\}_{i=0,...,n}$ under the action of $W=S_N$, where we have numbered the orbits so that $|\Lambda_i| = {\binom{2n+1} {i}}$. Note that here it is crucial that $N=2n+1$ is odd. Concretely, the $\Lambda_i$ consists of characters that attain the value $-1\in\mathbb{G}_m$ precisely $i$ times if $i$ is even and $2n+1-i$ times if $i$ is odd\footnote{We have chosen this parametrization to get a clean statement for our theorem below.}. Thus, for each ${i=0,...,n}$ we obtain an irreducible representation of $\widetilde W$ as follows
\beq\label{L_i}
L_i \ = \  \bigoplus_{\chi\in\Lambda_i} L_\chi\,.
\eeq
We write $\calL_i$ for the irreducible $K$-equivariant local system on $\fg_1^{rs}$ corresponding to the representation $L_i$ via the map $\pi_1^K(\fg_1^{rs},a)\to\widetilde W$.

\begin{thm}\label{matching}
We have $\mathfrak F(\IC(\mO_{2^i1^{2n-2i+1}},\bC))=\IC(\fg_1,\mL_i)$, $i=0,\ldots,n$.
\end{thm}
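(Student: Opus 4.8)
The plan is to deduce the precise matching from Theorem~\ref{FT of order 2 trivial}, which already yields $\mathfrak F(\IC(\mO_{2^k1^{2n-2k+1}},\bC))\cong\IC(\fg_1,\mL_{s(k)})$ for some permutation $s$ of $\{0,\dots,n\}$; the whole content is to show $s=\id$. One value is free: $\mO_{1^{2n+1}}=\{0\}$, so $\IC(\mO_{1^{2n+1}},\bC)$ is the skyscraper at the origin and $\mathfrak F(\bC_0)=\bC_{\fg_1}[-]=\IC(\fg_1,\bC)$; since $\Lambda_0\subset A[2]^\vee$ is the orbit of the trivial character, $\mL_0$ is the trivial local system and $s(0)=0$. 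The rest will be forced by comparing, through the Fourier transform, a graded numerical invariant attached to each of the $n+1$ objects on either side.

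All the relevant sheaves are conic on the cone $\fg_1$ (the orbit closures are cones, and since $\mathbb{G}_m$ is connected it acts trivially on $\pi_1^K(\fg_1^{rs},a)$, so each $\mL_i$ is monodromic), so the contraction principle for the Fourier transform gives, with $i_0:\{0\}\hookrightarrow\fg_1$, a canonical isomorphism $i_0^*\mathfrak F(\mathcal G)\cong i_0^!\mathcal G[\dim\fg_1]$. Applied to the Verdier self-dual $\mathcal G=\IC(\mO_{2^k1^{2n-2k+1}},\bC)$, this identifies the stalk cohomology at the origin of $\IC(\fg_1,\mL_{s(k)})$, up to degree reversal and an overall shift, with that of $\IC(\mO_{2^k1^{2n-2k+1}},\bC)$; write $P_k(q)$ for the latter Poincar\'e polynomial. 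On the nilpotent side, the stalk isomorphism of \S\ref{sec-stalks} identifies $\mH^*_0(\IC(\mO_{2^k1^{2n-2k+1}},\bC))$ with the stalk cohomology at $0$ of an explicit $\IC$-sheaf for a nilpotent orbit of $\mathfrak{sp}$ in the classical setting, which is known from classical Springer theory; so the $P_k(q)$ are explicit, pairwise distinct, and $k\mapsto P_k(1)$ is strictly increasing --- a finite combinatorial check, and precisely the place where the structure pins the answer. On the other side one computes the same invariants for $\IC(\fg_1,\mL_i)$ directly: from Lemma~\ref{lem-decomposition pi{2n1}}, $\check\pi_{2^n1*}\bC[-]=\bigoplus_{i=0}^n\IC(\fg_1,\mL_i)$, whose restriction to $0\in\cN_1$ gives $\bigoplus_i\mH^*_0(\IC(\fg_1,\mL_i))\cong H^*(\check\pi_{2^n1}^{-1}(0))=H^*(\on{OGr}(n,2n+1))$ (Poincar\'e polynomial $\prod_{j=1}^n(1+q^{2j})$), while restricting over the intermediate orbits $\mO_{2^j1^{2n-2j+1}}$ --- where the fibers of $\check\pi_{2^n1}$ are explicit varieties of subspaces isotropic both for $Q$ and for $(v,w)\mapsto\langle xv,w\rangle_Q$ --- supplies the remaining data needed to isolate each $\mH^*_0(\IC(\fg_1,\mL_i))$; an alternative is to compute the $A[2]$-character of $\mathfrak F(\IC(\mO_{2^k1^{2n-2k+1}},\bC))|_a$ for $a\in\ars$ and read off the orbit $\Lambda_i$ that occurs, or, following Lusztig's suggestion, to use fake-degree polynomials of the $\widetilde W$-representations $L_i$. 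Since the invariants are pairwise distinct on each side, equating them through the Fourier transform, with the anchor $s(0)=0$, leaves no freedom and forces $s=\id$.

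The main obstacle is the pair of stalk computations --- the $\mathfrak{sp}$-side via \S\ref{sec-stalks}, and, more delicately, the disentangling of the individual summands $\IC(\fg_1,\mL_i)$ on the Fourier side, where it may be cleaner to argue with characteristic cycles or Euler characteristics (as the paper permits) rather than full graded stalks --- together with fixing the exact shift and degree reversal in $i_0^*\mathfrak F\cong i_0^![\dim\fg_1]$ in the $K$-equivariant setting so that the numerical comparison is unambiguous.
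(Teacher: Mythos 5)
Your overall strategy---pin the permutation $s$ of Theorem~\ref{FT of order 2 trivial} by computing a numerical invariant on both sides of the Fourier transform---is reasonable in spirit, and the anchor $s(0)=0$ and the contraction identity $i_0^*\mathfrak F(\mathcal G)\cong i_0^!\mathcal G[\dim\fg_1]$ for conic sheaves are fine. But there is a genuine gap at exactly the point you flag as "more delicate": you never actually compute, independently of the matching you are trying to prove, the invariants of the individual summands $\IC(\fg_1,\mL_i)$. Smallness of $\check\pi_{2^n1}$ only gives $\bigoplus_i\mH^*_0\IC(\fg_1,\mL_i)\cong H^*(\on{OGr}(n,2n+1),\bC)$ (up to shift); splitting this, or the stalks over the intermediate orbits $\cO_{2^j1^{2n-2j+1}}$, into the contributions of the individual $\mL_i$ requires controlling how the monodromy of $\mL_i$ on $\fg_1^{rs}$ specializes to the fibers over non-generic points (e.g.\ via the $A(x_j)$- and $\widetilde W$-actions), and the sentence "supplies the remaining data needed to isolate each $\mH^*_0(\IC(\fg_1,\mL_i))$" is an assertion, not an argument---it is the hard content. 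The proposed alternatives are in the same position: computing the $A[2]$-character of $\mathfrak F(\IC(\cO_{2^k1^{2n-2k+1}},\bC))|_a$ at $a\in\ars$ is essentially equivalent to the theorem itself, and no mechanism is given for it or for the fake-degree route. A smaller but real error: by \eqref{eqn-fj} one has $P_k(1)=\binom{n}{\lfloor k/2\rfloor}$, so $k\mapsto P_k(1)$ is \emph{not} strictly increasing (it is constant on pairs $\{2j,2j+1\}$); the graded stalk polynomials are pairwise distinct only because their lowest degrees $-\dim\cO_{2^k1^{2n-2k+1}}$ differ, so any argument must use the grading, not Euler characteristics alone on this side.

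For comparison, the paper does not attempt to isolate stalks of the $\IC(\fg_1,\mL_i)$ at all. For odd $i$ it forces the matching inductively using the auxiliary resolutions $\tau_m$ of $\bar\cO_{3^{m-1}2^11^{2n-3m+2}}$ and the fact (from \cite{CVX1}) that only $\IC(\fg_1,\mL_{2j-1})$, $j\le m$, can occur in $\check\tau_{m*}\bC[-]$. For even $i$ it compares generic ranks instead of stalks at $0$: the characteristic-cycle identity of Proposition~\ref{CC} (proved via Euler characteristics, the $\mathfrak{sp}(2n)$ stalk comparison of \S\ref{sec-stalks}, and classical Springer theory with fake degrees and Kostka numbers), combined with Theorem~\ref{non-trivial even} and Proposition~\ref{trivial odd}, gives generic rank $\dim\mF_i+\dim\mL_{i-1}=\binom{2n+1}{i}=\dim\mL_i$, which pins $s(i)=i$ since the $\dim\mL_j$ are distinct. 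Your proposal bypasses all of this input (the nontrivial-local-system matching, the CC identity, the \cite{CVX1} results) but replaces it with a computation that is not carried out, so as written it does not constitute a proof.
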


We will now define irreducible $K$-equivariant local systems $\cF_i$ on $\fg_1^{rs}$ which are obtained from the Fourier transforms of $\IC(\mO_{2^i1^{2n-2i+1}},\cE_i)$'s. These local systems arise as representations of $\pi_1(\fc^{rs},a) = B_N$. To this end we consider the following universal family $C$ of hyperelliptic curves of genus $n$:
\beqn
\begin{gathered}
\text{To each $a=(a_1, \dots,a_{2n+1})\in \fa^{rs}$ we associate the hyperelliptic}
\\
\text{curve $C_a$ which ramifies at $\{a_1, \dots,a_{2n+1}, \infty\}$}\,.
\end{gathered}
\eeqn
(Here we have chosen a Cartan subspace $\fa$ of $\Lg_1$ such that it consists of diagonal matrices.) 
This family gives us a monodromy representation $B_N \to Sp(H^1(C_a,\bC))$. Note that, by \cite{A} (see also \cite{KS}) this monodromy representation has a Zariski dense image, in particular, the monodromy is infinite. 
From this we get a monodromy representation on the Jacobian of $C_a$ which we break into primitive parts:
\beqn
B_N \to H^i(\Jac(C_a),\bC)_{prim}\is(\wedge^iH^1(C_a,\bC))_{prim}\ \ i\in[1,n]\,.
\eeqn
Associated to this representation we obtain a local system $\cF_i$ on $\fg_1^{rs}$. Note that the part $A[2]$ of $\pi_1^K(\Lg_1^{rs})$ acts trivially on $\cF_i$.

\begin{thm}\label{non-trivial even} We have
$\mathfrak F(\IC({\cO_{2^{i}1^{2n+1-2i}}},\cE_{i}))=\IC(\fg_1,\mF_{i})$, $i=1,\ldots,n$.
\end{thm}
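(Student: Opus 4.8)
Throughout, write $\cO_i:=\cO_{2^i1^{2n+1-2i}}$. By Theorem~\ref{s} there is a permutation $s$ of $\{1,\dots,n\}$ with $\mathfrak F(\IC(\cO_i,\mathcal E_i))\cong\IC(\fg_1,\mathcal F_{s(i)})$, and the content of Theorem~\ref{non-trivial even} is that $s$ is the identity. The plan, parallel to the proof of Theorem~\ref{matching}, is to compare the multiplicities with which the two families of IC sheaves occur in a Fourier-dual pair of decomposition-theorem decompositions. On the regular semisimple side the relevant object is $\pi_*\IC(F,\bC)$, the pushforward under the proper map $\pi\colon F\to\fg_1$ of the family of Fano varieties of $(n-1)$-planes from \S\ref{non-trivial on 2^n}. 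Over $\fg_1^{rs}$ the map $\pi$ is a smooth family whose fibre $F_a$ is a $\Jac(C_a)$-torsor, so by the Corollary of \S\ref{non-trivial on 2^n} the restriction $\pi_*\IC(F,\bC)|_{\fg_1^{rs}}$ is (a shift of) $\bigoplus_i H^i(\Jac(C_a),\bC)$ as a representation of $\pi_1^K(\fg_1^{rs},a)$, and the Lefschetz decomposition with respect to the principal polarization identifies this with $\bigoplus_{k=0}^n\mathcal F_k\otimes H^*(\bP^{n-k},\bC)$, where $\mathcal F_0=\bC$ and the $\mathcal F_k$ are pairwise non-isomorphic. By the decomposition theorem and relative hard Lefschetz the full-support summands of $\pi_*\IC(F,\bC)$ are therefore exactly the $\IC(\fg_1,\mathcal F_k)$, $k=0,\dots,n$, occurring with graded multiplicity $H^*(\bP^{n-k},\bC)$; in particular the total multiplicity is $n-k+1$, which is strictly decreasing in $k$.

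Next I would transport this through the Fourier transform. By the analysis in the proof of Proposition~\ref{supp}, $f_*\IC(F,\bC)=\IC(Z,\bC)\oplus\IC(Z,\mathcal L)$, these descend along $q$ to $P_1\oplus P_2$, and $\mathfrak F(\pi_*\IC(F,\bC))\cong\pi_{2^n1,*}(\mathfrak F(P_1\oplus P_2))$, a complex supported on $\bar\cO_{2^n1}$. The orbits contained in $\bar\cO_{2^n1}$ are precisely the $\cO_j$, $0\le j\le n$, each carrying only the trivial and (for $j\ge1$) the local system $\mathcal E_j$; and $\IC(\cO_j,\bC)$ for $j\ge1$ cannot occur in $\pi_{2^n1,*}(\mathfrak F(P_1\oplus P_2))$, since by Theorem~\ref{matching} its Fourier transform $\IC(\fg_1,\mathcal L_j)$ has full support and hence does not occur in $\pi_*\IC(F,\bC)$. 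Thus the only IC summands of $\pi_{2^n1,*}(\mathfrak F(P_1\oplus P_2))$ are the $\IC(\cO_j,\mathcal E_j)$, $j\ge1$, together with the skyscraper $\mathfrak F(\IC(\fg_1,\bC))$ at the origin; since $\mathfrak F$ preserves multiplicities, $\IC(\cO_i,\mathcal E_i)$ occurs in it with graded multiplicity $H^*(\bP^{n-s(i)},\bC)$, i.e. total multiplicity $n-s(i)+1$. It therefore suffices to compute this multiplicity directly and find it equal to $n-i+1$.

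For the direct computation I would identify $\mathfrak F(P_1\oplus P_2)$ and then decompose its pushforward along the semismall map $\pi_{2^n1}$. Since $Z$ is the locus where $\bar x\colon V_n\to V/V_n^\perp$ has rank $\le1$, after descent $P_1$ and $P_2$ are the IC sheaves---with constant, respectively the sign, coefficients---of a bundle over $\on{OGr}(n,2n+1)$ whose fibre is the variety of rank-$\le1$ symmetric $n\times n$ matrices; their Fourier transforms are the IC sheaves on the dual discriminant, the bundle of rank-$\le(n-1)$ symmetric matrices, which is a standard determinantal computation. One then pushes these forward along $\pi_{2^n1}$, whose fibres are described in \eqref{fiber of 2^n1}, and reads off the multiplicities using the stalk computations of \S\ref{sec-stalks}: the isomorphism there between the stalks of $\IC(\cO_j,\mathcal E_j)$ and the stalks of the corresponding IC sheaves for $\mathfrak{sp}$, where the classical description of the symplectic side is known, pins down all the multiplicities and in particular yields $n-i+1$ for $\IC(\cO_i,\mathcal E_i)$. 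Comparing with the previous paragraph gives $H^*(\bP^{n-s(i)},\bC)=H^*(\bP^{n-i},\bC)$, hence $s(i)=i$; as $\mathcal E_i$ is the unique nontrivial $K$-equivariant local system on $\cO_i$, this is exactly the assertion.

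The main obstacle is the last step: separating, inside $\pi_{2^n1,*}(\mathfrak F(P_1\oplus P_2))$, the contributions of the various $\IC(\cO_j,\mathcal E_j)$, and accounting for the lower-dimensional orbit closures that enter a priori through the non-full-support summands of $\pi_*\IC(F,\bC)$. This is exactly where the stalk comparison of \S\ref{sec-stalks} is essential, reducing the bookkeeping to a known computation in the classical symplectic case; the strict monotonicity of $n-k+1$ together with relative hard Lefschetz then removes all remaining freedom in the matching.
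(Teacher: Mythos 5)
Your first two paragraphs are essentially sound and give a genuinely useful reorganization of the problem: the total multiplicity of $\IC(\fg_1,\mF_k)$ in $\pi_*\IC(F,\bC)$ is $n-k+1$ by the monodromy-equivariant Lefschetz decomposition of $H^*(\Jac(C_a),\bC)$ into the irreducible $\mF_k$, and since by Theorems \ref{FT of order 2 trivial} and \ref{s} every simple $K$-equivariant perverse sheaf supported in $\bar\cO_{2^n1}$ has Fourier transform with full support, the multiplicity of $\IC(\cO_{2^i1^{2n+1-2i}},\cE_i)$ in $\fF(\pi_*\IC(F,\bC))=\pi_{2^n1,*}\fF(P_1\oplus P_2)$ is exactly $n-s(i)+1$. (The same support argument in fact shows that $\pi_*\IC(F,\bC)$ has \emph{no} summand of smaller support, so the ``lower-dimensional'' contributions you worry about at the end do not exist; also, you should quote Theorem \ref{FT of order 2 trivial} rather than Theorem \ref{matching} at that point, since the paper deduces Theorem \ref{matching} for even $i$ from the very statement you are proving.) The gap is in your third paragraph, and the one concrete assertion made there is false: it is not true that both $\fF(P_1)$ and $\fF(P_2)$ are IC sheaves of the rank-$\le(n-1)$ sub-bundle. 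Indeed $\pi_{2^n1}$ is an isomorphism over the open orbit $\cO_{2^n1}$, and since every $\IC(\fg_1,\mF_k)$, $k=1,\dots,n$, occurs in $\pi_*\IC(F,\bC)$, Theorem \ref{s} shows that every $\IC(\cO_{2^i1^{2n+1-2i}},\cE_i)$, in particular $\IC(\cO_{2^n1},\cE_n)$, occurs in $\pi_{2^n1,*}\fF(P_1\oplus P_2)$; hence $\fF(P_1\oplus P_2)$ is nonzero on the full-rank locus of $K\times^{P_K}[\Ln_P,\Ln_P]_1$, so at least one of $\fF(P_1),\fF(P_2)$ has full support there. Fibrewise this is the familiar phenomenon that the Fourier transform of the cone over the Veronese, taken with one of the two rank-one local systems, produces a square-root-of-the-discriminant local system on the nondegenerate locus (the sheaf counterpart of $\det(y)^{-1/2}$); deciding which of the two local systems is responsible, and what the rest of the transform is, is already a parity question of exactly the kind your sketch suppresses.

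Even after correcting this identification, the decisive step is still missing: to obtain the value $n-i+1$ you must actually decompose $\pi_{2^n1,*}\fF(P_1\oplus P_2)$, i.e.\ compute the hypercohomology of the fibres $\pi_{2^n1}^{-1}(x_j)\cong\on{OGr}(n-j,2n+1-2j)$, $x_j\in\cO_{2^j1^{2n+1-2j}}$, with coefficients in the (as yet undetermined) complexes $\fF(P_1),\fF(P_2)$, keep track of the component group $A(x_j)=Z_K(x_j)/Z_K(x_j)^0$ so as to separate $\cE_j$ from the trivial local system, and then invert the resulting triangular system using the stalk information of \S\ref{sec-stalks}. Strict monotonicity of $n-k+1$ and relative hard Lefschetz do not substitute for this; they only convert an already computed multiplicity into the equality $s(i)=i$. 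This bookkeeping is precisely what the paper does, by a different route, with the auxiliary family $\upsilon,\check\upsilon$ and the induction culminating in \eqref{directsumdec} and \eqref{directsummand2} --- and even there only the case of even $i$ is settled in this paper, the odd case being deferred to \cite{CVX1}. So your outline is a plausible alternative strategy, and the multiplicity rigidity extracted from the Jacobian family is a nice idea, but as written it is not a proof: its key computation is unperformed, and the shortcut proposed for it rests on an incorrect description of $\fF(P_1\oplus P_2)$.
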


\section{Bijection of sets}\label{sec-order2}
In this section we show that
\begin{prop}\label{FT of order 2 trivial}
There is a permutation $s$ of the set $\{0,...,n\}$ such that 
\[\mathfrak F(\IC({\cO}_{2^k1^{2n-2k+1}},\bC))\is \IC(\fg_1,\mL_{s(k)})\]
for $k\in\{0,...,n\}$.
\end{prop}

\begin{prop}\label{s}
There is a permutation $s\in S_n$ such that 
\[\mathfrak F(\IC(\mO_{2^{i}1^{2n-2i+1}},\mE_{i}))\is 
\IC(\fg_1,\mathcal F_{s(i)})
,\ i\in[1,n].\]
\end{prop}

Let us denote by $\on{OGr}(s,2n+1)$ the variety of $s$-dimensional isotropic subspaces in $\bC^{2n+1}$ with respect to a non-degenerate bilinear form. We write $V_i$ for a vector subspace of $V=\bC^{2n+1}$ of dimension $i$ and $V_i^\p=\{x\in V\,|\,\langle x,V_i\rangle_Q=0\}$.

\subsection{Proof of Proposition \ref{FT of order 2 trivial}}Let $P$ be the parabolic subgroup in $G$ that stabilizes the partial flag $0\subset V_n^0\subset V_n^{0\perp}\subset V$, where $V_n^0=\text{span}\{e_{1},\ldots,e_{n}\}$ (here $(e_i)$ is a basis  of $V$ such that $\langle e_i,e_j\rangle_Q=\delta_{i+j,2n+2}$). Let 
$P_K=K\cap P$ and
\beqn
E=\{x\in\Lg_1\,|\,xV_n^{0\perp}=0\}.
\eeqn
By Reeder \cite{R}, we have the following resolution of singularities of $\bar{\cO}_{2^n1}$
\beq\label{Redder's map}
{\pi_{2^n1}}:\xymatrix{
K\times^{P_K}E\cong\{(x,0\subset V_n\subset V_n^\p\subset V)\,|\,x\in\Lg_1,\,xV_n^\p=0\}\ar[r]&\bar{\cO}_{2^n1}.}
\eeq
\begin{lemma}
The map $\pi_{2^n1}$ is semi-small and we have
\beq\label{decomp of 2^n1}
\pi_{2^n1*}\bC[-]\cong\bigoplus_{k=0}^n\IC({\cO}_{2^k1^{2n-2k+1}},\bC).
\eeq
\end{lemma}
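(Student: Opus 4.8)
The plan is to combine an explicit description of the fibres of $\pi_{2^n1}$ with the decomposition theorem for semismall maps. First I would pin down the image and the fibres. Since every $x\in\fg_1$ is symmetric with respect to $\langle,\rangle_Q$, one has $\ker x=(\Img x)^{\perp}$, so the two conditions $xV\subset V_n$ and $xV_n^{\perp}=0$ in \eqref{fiber of 2^n1} are jointly equivalent to the single chain of inclusions $\Img x\subset V_n\subset\ker x$. In particular any $x$ in the image has $\Img x\subset\ker x$, i.e.\ $x^2=0$, so $\Img\pi_{2^n1}\subset\bar\cO_{2^n1}$; conversely, given $x$ with $x^2=0$ one can always complete $\Img x$ to a maximal isotropic subspace contained in $\ker x$, so $\Img\pi_{2^n1}=\bar\cO_{2^n1}$. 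Moreover, for $x\in\cO_{2^k1^{2n-2k+1}}$ the quotient $\ker x/\Img x$ carries a nondegenerate quadratic form of \emph{odd} dimension $2n+1-2k$, and $V_n\mapsto V_n/\Img x$ identifies
\[
\pi_{2^n1}^{-1}(x)\ \cong\ \on{OGr}(n-k,\,2n+1-2k),
\]
the variety of maximal isotropic subspaces of an odd-dimensional quadratic space, which is smooth, irreducible, of dimension $\tfrac12(n-k)(n-k+1)$.

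Next I would run the dimension count. From \eqref{dim-centralizer} one computes $\dim\cO_{2^k1^{2n-2k+1}}=k(2n+1-k)$, hence $\dim\bar\cO_{2^n1}=n(n+1)$ and $\on{codim}_{\bar\cO_{2^n1}}\cO_{2^k1^{2n-2k+1}}=(n-k)(n-k+1)$. Comparing with the fibre dimension,
\[
2\dim\pi_{2^n1}^{-1}(x)=(n-k)(n-k+1)=\on{codim}_{\bar\cO_{2^n1}}\cO_{2^k1^{2n-2k+1}}\qquad(0\le k\le n).
\]
The source $K\times^{P_K}[\Ln_P,\Ln_P]_1$ is a vector bundle over the projective homogeneous space $K/P_K$, hence smooth and irreducible; since the fibre over the open orbit $\cO_{2^n1}$ (the case $k=n$) is a single point, $\pi_{2^n1}$ is birational onto $\bar\cO_{2^n1}$, so the source has dimension $n(n+1)$. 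Together with properness, the displayed equality shows that $\pi_{2^n1}$ is semismall and that every orbit $\cO_{2^k1^{2n-2k+1}}$, $0\le k\le n$, is a relevant stratum.

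Finally I would apply the decomposition theorem for semismall maps: $\pi_{2^n1*}\bC[-]$ is perverse and
\[
\pi_{2^n1*}\bC[-]\ \cong\ \bigoplus_{k=0}^{n}\IC\bigl(\cO_{2^k1^{2n-2k+1}},\calL_k\bigr),
\]
where $\calL_k$ is the $K$-equivariant local system on $\cO_{2^k1^{2n-2k+1}}$ with stalk $H^{\mathrm{top}}(\pi_{2^n1}^{-1}(x),\bC)$, carrying its natural monodromy and $Z_K(x)$-action. Since each fibre $\on{OGr}(n-k,2n+1-2k)$ is irreducible, this stalk is one-dimensional, spanned by the fundamental class, on which $Z_K(x)$ acts trivially because holomorphic automorphisms of a complex projective variety preserve the complex orientation; hence $\calL_k\cong\bC$ and \eqref{decomp of 2^n1} follows.

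The one genuinely delicate step is the fibre computation in the first paragraph: one must carefully verify the equivalence of the two conditions in \eqref{fiber of 2^n1} with $\Img x\subset V_n\subset\ker x$, and that the form induced on $\ker x/\Img x$ is nondegenerate of dimension exactly $2n+1-2k$, so that the fibre is the \emph{irreducible} (odd-type) maximal isotropic Grassmannian — this oddness, which is where the hypothesis $N=2n+1$ enters, is precisely what forces $\calL_k$ to be trivial. Everything downstream, namely the codimension formula from \eqref{dim-centralizer} and the semismall decomposition, is routine.
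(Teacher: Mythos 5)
Your proposal is correct and follows essentially the same route as the paper: identify the fibre over $x\in\cO_{2^k1^{2n-2k+1}}$ with the (irreducible) variety of $(n-k)$-dimensional isotropic subspaces of $(\Img x)^\perp/\Img x\cong\bC^{2n+1-2k}$, match $2\dim\pi_{2^n1}^{-1}(x)=(n-k)(n-k+1)$ against the codimension computed from \eqref{dim-centralizer}, and invoke the decomposition theorem for semismall maps, with irreducibility of the fibres forcing the local systems to be trivial. The paper leaves that last point implicit, so your explicit justification is a harmless elaboration rather than a different argument.
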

\begin{proof}
Let $x\in\cO_{2^k1^{2n-2k+1}}$. Suppose that $V_n\in\pi_{2^n1}^{-1}(x)$. Then $\text{Im}\,x\subset V_n$ and $\dim \text{Im}\,x=k$. We have that
\beqn
\begin{gathered}
\pi_{2^n1}^{-1}(x)\cong\{n-k\text{ dimensional isotropic subspaces of }(\on{Im}\,x)^\perp/\on{Im}\,x\}\\\hspace{2in}\cong\on{OGr}(n-k,2n-2k+1).
\end{gathered}
\eeqn
Thus $\pi_{2^n1}^{-1}(x)$ is irreducible and it is easy to check that 
$
2\dim\pi_{2^n1}^{-1}(x)=(n-k)(n-k+1)=\on{codim}_{\bar{\cO}_{2^n1}}\cO_{2^k1^{2n-2k+1}}.
$ In the last equation we use the fact that $\on{dim}\cO_{2^k1^{2n-2k+1}}=k(2n+1-k)$. The lemma follows from the decomposition theorem.
\end{proof}
Consider the map
\beq\label{dual resolution of 2^n}
{\check{\pi}_{2^n1}}:\xymatrix{K\times^{P_K}E^\perp\cong\{(x,0\subset V_n\subset V_n^\p\subset V)\,|\,x\in\Lg_1,\,xV_n\subset V_n^\p\}\ar[r]&\Lg_1,}
\eeq
where $E^\perp$ denotes the orthogonal complement of $E$ in $\Lg_1$ with respect to the non-degenerate form on $\Lg_1$ (given by the restriction of the Killing form on $\Lg$). 
For $x\in\Lg_1$, we have
\beqn
\check{\pi}_{2^n1}^{-1}(x)=\{V_n\text{ maximal isotropic subspace in }V\,|\,x V_n\subset V_n^\perp\}.
\eeqn
For $x\in\fg_1^{rs}$ 
there are $2^{2n}$ such maximal isotropic subspaces in $V$ and 
the centralizer $Z_K(x)$ acts simply-transitively on those subspaces (see  \cite{Re,BG}).

Consider the  $K$-equivariant local system $\mL=(\check\pi_{2^n1})_*\bC|_{\fg_1^{rs}}$  of rank 
$2^{2n}$ on $\fg_1^{rs}$. Our first goal is to describe this local system.
Fix $a\in\ars$. The stalk $L:=\mL_a$ carries an action of the $K$-equivariant fundamental group 
$\pi_1^K(\fg_1^{rs},a)=Z_K(a)\rtimes B_N$.  As $Z_K(a)$ acts simply transitively on  $\check{\pi}_{2^n1}^{-1}(x)$, as was remarked above, we can identify $L$ with $\bC[Z_K(a)]= \bC[A[2]]$. Furthermore, the action of $\pi_1^K(\fg_1^{rs},a)$ factors through the Tits group $\widetilde W$ and it coincides with the canonical representation of $\widetilde W$ on $\bC[A[2]]$. Let us recall the irreducible $K$-equivariant local systems $\calL_i$ from~\S\ref{order 2} and then 
\beq\label{mL_i}
\mL=\bigoplus_{i=0}^n\mL_i.
\eeq

\begin{lemma}\label{lem-decomposition pi{2n1}}
We have 
\beq\label{decomposition cpi{2n1}}
(\check\pi_{2^n1})_*\bC[-]=\IC(\Lg_1,\mL)=\bigoplus_{i=0}^n \IC(\Lg_1,\mL_i).
\eeq
In particular, the map $\check\pi_{2^n1}$ is small.
\end{lemma} 
\begin{proof}
By the decomposition theorem, $\bigoplus_{i=0}^n\IC(\Lg_1,\mL_i)$ is a 
direct summand of $(\check\pi_{2^n1})_*\bC[-]$. On the other hand, since, by the functoriality of the Fourier transform,  
\beq\label{eqn-ftof pi}
\mathfrak F((\pi_{2^n1})_*\bC[-])\is(\check\pi_{2^n1})_*\bC[-],
\eeq the decomposition in 
(\ref{decomp of 2^n1}) implies that $(\check\pi_{2^n1})_*\bC[-]$ has exactly $n+1$ irreducible summands.
The lemma follows from \eqref{mL_i}.
\end{proof}

Proposition \ref{FT of order 2 trivial} follows from \eqref{decomp of 2^n1}, \eqref{decomposition cpi{2n1}} and \eqref{eqn-ftof pi}.

\subsection{Proof of Proposition \ref{s}}\label{non-trivial on 2^n}

Let $V=\bC^{2n+1}$ and $W=V\oplus\bC$. 
For any $x\in\fg_1$, consider the following two quadrics 
in $\mathbb P(W)$: \[\tQ(v,a)=\langle v,v\rangle_Q=0\ \text{and}\ \
\tQ_x(v, a)=\langle v,xv\rangle_Q+a^2=0.\]
Define
\beq\label{Def of F}
F=\{(x,W_n\subset W)\,|\, 
x\in\fg_1,\ \dim W_n=n,\ \ 
\mathbb P(W_n)\subset\tQ\cap\tQ_x\}.
\eeq
The map $W\ra W$ given by $(v, a)\mapsto (v,-a)$ induces 
an involution $\sigma$ on $F$ and the set $F^\sigma$ of fixed points  
is equal to \[F^\sigma=\{(x,V_n\subset V)\,|\,x\in\fg_1,\ V_n\text{ maximal isotropic},\,x V_n\subset V_n^\perp\}\cong K\times^{P_K}E^\perp\text{ (see \eqref{dual resolution of 2^n})}.\]

Note that for $x\in\fg_1^{rs}$, the pencil of quadrics spanned by 
$\tQ$ and $\tQ_x$ is non-degenerate and 
contains exactly $2n+2$ singular elements, namely, the 
quadric $\tQ$ at infinity and the $2n+1$ quadrics 
$\lambda_i\tQ-\tQ_x$, where $\lambda_1,...,\lambda_{2n+1}$
are the roots of $p(t)=\det (t\cdot\id-x)$. 

We denote by $\pi:F\ra\fg_1$ the natural projection. 
The fiber $F_x=\pi^{-1}(x)$ of $\pi$ over $x\in\fg_1^{rs}$
is the Fano variety of $(n-1)$-dimensional subspaces contained in the smooth 
complete intersection $\tQ\cap\tQ_x$. According to \cite{Re}, $F_x$ is a torsor over $\Jac(C_x)$, 
where $C_x$ is the smooth projective hyperelliptic curve with affine equation:
\[y^2=\prod_{i=1}^{2n+1}(t-\lambda_i).\]
Moreover, the action of the involution $\sigma$ on $F_x$ is compatible with 
the inversion on $\Jac(C_x)$. In particular, the set $F^\sigma_x$ of fixed points is a 
$\Jac(C_x)[2]$-torsor, where $\Jac(C_x)[2]$ consists of 2-torsion points of  $\Jac(C_x)$.

The discussion above has the following relative version. Namely,
let $\pi^C:C\ra\fg_1^{rs}$ be the family of curves
$C_x$ and let $\Jac(C)\ra\grs$ denote the corresponding relative Jacobian.
Let $F|_{\grs}\ra\grs$ be the family of Fano varieties of 
$(n-1)$-dimensional subspaces contained in the smooth
complete intersection $\tilde Q\cap\tilde Q_x$.
Then $\Jac(C)$ acts naturally on $F|_{\grs}$, and  
$F|_{\grs}$ is a $\Jac(C)$-torsor under this action. Similarly, 
$\Jac(C)[2]$ acts on $F^\sigma$, and 
$F^\sigma$ is a $\Jac(C)[2]$-torsor under this action.
We have the following observation.

\begin{lemma}
(1)
The action map $\Jac(C)\times_{\grs}F^{\sigma}|_{\grs}\ra F|_{\grs}$
factors through an
isomorphism 
\[(\Jac(C)\times_{\grs}F^{\sigma}|_{\grs})/\Jac(C)[2]\is F|_{\grs}.\] 
Here $\Jac(C)[2]$ acts on $\Jac(C)\times_{\grs}F^{\sigma}|_{\grs}$ via the diagonal action.
\

(2) For any $a\in\grs$, 
there is a canonical isomorphism 
\[H^i(\Jac(C_a),\bC)
\is H^i((\Jac(C_a)\times F_a^\sigma)/\Jac(C_a)[2],\bC)\]
compatible with the monodromy actions of $\pi_1^K(\grs,a)$ on both sides.
\end{lemma}
\begin{proof}
(1) is clear. For (2), we observe that, by K\"unneth formula, we have 
\beq\label{Kunneth}
H^i((\Jac(C_a)\times F_a^\sigma)/\Jac(C_a)[2],\bC)\is (H^i(\Jac(C_a),\bC)
\otimes H^0(F_a^\sigma,\bC))^{\Jac(C_a)[2]}
\eeq
where the right hand side is the  
$(\Jac(C_a)[2])$-fixed vectors in $H^i(\Jac(C_a),\bC)
\otimes H^0(F_a^\sigma,\bC)$. Since the action of 
$\Jac(C_a)[2]$ on 
$H^i(\Jac(C_a),\bC)$ is trivial\footnote{To see this, we observe that the action of 
$\Jac(C_a)[2]$ on $H^i(\Jac(C_a))$ is the restriction of the action of  
$\Jac(C_a)$ on $H^i(\Jac(C_a))$. Since $\Jac(C_a)$ is connected the latter action  is trivial.} we have 
\beq\label{trivial action}
(H^i(\Jac(C_a),\bC)
\otimes H^0(F_a^\sigma,\bC))^{\Jac(C_a)[2]}\is H^i(\Jac(C_a),\bC),
\eeq
Combining (\ref{Kunneth}) and (\ref{trivial action}),
we obtain a canonical isomorphism 
\beq\label{can iso}
H^i((\Jac(C_a)\times F_a^\sigma)/\Jac(C_a)[2],\bC)\is H^i(\Jac(C_a),\bC).
\eeq
Since the isomorphisms in (\ref{Kunneth}), (\ref{trivial action}) are compatible with the 
monodromy actions, so is the composition in (\ref{can iso}). Thus (2) follows. 
\end{proof}
It follows immediately that
\begin{corollary}\label{coro-mono}
There is a canonical isomorphism $H^i(F_a,\bC)\is H^i(\Jac(C_a),\bC)$
compatible with the monodromy actions of $\pi_1^K(\grs,a)$.
\end{corollary}

We use the corollary above to study the monodormy of the 
family of Fano varieties $F|_{\grs}\ra\grs$.
To begin with, we observe that over the Kostant section $\kappa:\fc^{rs}\hookrightarrow\fg_1^{rs}$, the family 
$\pi^C:C\ra\grs$
is the universal family of hyperelliptic curves
of genus $n$. As mentioned before, the monodromy representation of $\pi_1^K(\grs,a)$ on $H^1(C_a,\bC)$ 
is irreducible and the image of $\pi_1^K(\grs,a)\to Sp(H^1(C_a,\bC))$ is Zariski dense.
This fact together with the corollary above implies that the monodromy representation of $\pi_1^K(\fg_1^{rs},a)$ on 
\[
H^i(F_a,\bC)_{prim}\is H^i(\Jac(C_a),\bC)_{prim}\is
(\wedge^iH^1(C_a,\bC))_{prim}
\]
is irreducible for $i=1,...,n$, moreover, the corresponding monodromy group is \emph{infinite}.
Thus the corresponding irreducible $K$-equivariant local systems on $\fg_1^{rs}$ are
\beq\label{eqn-def Fi}
(\on{R}^i\pi_*\bC|_{\fg_1^{rs}})_{prim}=\cF_i
\eeq
where $\cF_i$ is the local system defined in \S\ref{sec-the local systems}.
We have \[\dim\mF_i=\binom {2n}{i}-\binom{2n}{ i-2}\] and 
$\mathcal F_i\ncong\mathcal F_j$ for $i\neq j$. 

Recall the local systems $\{\mL_i\}_{i=0,...,n}$ defined in \S\ref{sec-the local systems}.
Since each $\mL_i$
has
finite monodromy, we have 
$\mathcal F_i\ncong\mL_j$ for all $i,j$.
As there is only one nontrivial irreducible $K$-equivariant  local system $\mE_i$ on 
each
$\mO_{2^{i}1^{2n-2i+1}}$ ($i\geq 1$), 
Proposition \ref{s} follows from Proposition \ref{FT of order 2 trivial} and the following proposition
\begin{prop}\label{supp} For $i=1,...,n$,
$\mathfrak F(\IC(\fg_1,\mathcal F_i))$ is supported on 
$\bar{\mathcal O}_{2^n1}$.
\end{prop}

\begin{proof}
To begin with, we show that the variety $F$ defined in~\eqref{Def of F} is isomorphic to 
\beq\label{equiv of F}
\begin{gathered}
\{(x,V_n,\alpha)\,|\,x\in\Lg_1,\,V_n\text{ maximal isotropic in }V,\,\alpha\in V_n^*,\\\langle v,xv'\rangle+\alpha(v)\alpha(v')=0\text{ for all }v,v'\in V_n\}.
\end{gathered}
\eeq
Consider the projection maps $p_1:W=V\oplus\bC\to V$ and $p_2:W=V\oplus\bC\to\bC$. Let $(x,W_n)\in F$ and let $V_n=p_1(W_n)$. It is easy to see that $p_1|_{W_n}:W_n\to V_n$ is an isomorphism. Moreover $V_n\subset V_n^\p\subset V$. Let $\alpha:V_n\to\bC$ be defined by $\alpha(v)=p_2\circ (p_1|_{W_n})^{-1}v$. One checks readily that $(x,V_n,\alpha)$ belongs to the set in~\eqref{equiv of F}. Conversely, given $(x,V_n,\alpha)$, we let $W_n=\{(v,\alpha(v))\,|\,v\in V_n\}$. Then $(x,W_n)\in F$.  It follows that $F$ is smooth.

 Since $\pi:F\ra\fg_1$ is proper, the decomposition theorem and~\eqref{eqn-def Fi} imply that 
$\IC(\fg_1,\mF_i)$ (up to shift) is a summand of $\pi_*\bC_F$, $i=1,\ldots,n$.  Thus it is enough to
show that $\mathfrak F(\pi_*\bC_F)$ is supported on $\bar\mO_{2^n1}$.

Recall the maps
$\pi_{2^n1}:K\times^{P_K}E\ra\bar\mO_{2^n1}$ 
and 
$\check\pi_{2^n1}:K\times^{P_K}E^\p$ defined 
in (\ref{Redder's map}) and (\ref{dual resolution of 2^n}). Let $q:
K/P_K\times \fg_1\is K\times^{P_K}\fg_1\ra
K\times^{P_K}(\fg_1/E^\bot)$ denote the quotient map. Note that the non-degenerate invariant form on $\Lg_1$ induces isomorphisms 
$E\is(\fg_1/E^\bot)^*$, $\fg_1\is\fg^*_1$, and 
under these isomorphisms,
the dual map of 
$q$  can be identified with the natural embedding 
\[\check q:K\times^{P_K}E\ra K\times^{P_K}\fg_1.\]
Let 
\beqn
\begin{gathered}
\bar F=\{(\bar x,V_n,\alpha)\,|\,\bar x\in\Lg_1/\{x\in\Lg_1\,|\,xV_n\subset V_n^\p\},\,V_n\text{ maximal isotropic in }V,\,\alpha\in V_n^*,\\\qquad\qquad\langle v,xv'\rangle+\alpha(v)\alpha(v')=0\text{ for all }v,v'\in V_n\text{ and $x\in \Lg_1$ any lift of $\bar x$}\}.
\end{gathered}
\eeqn
We have the following Cartesian diagram:
\[\xymatrix{
F\ar[r]^{\bar q}\ar[d]^u&\bar F\ar[d]^{\bar u}\\
 K\times^{P_K}\fg_1\ar[r]^-{q}&K\times^{P_K}(\fg_1/E^\p)\,,}\]
where $u: (x,V_n,\alpha)\mapsto(V_n,x)$, $\bar q:(x,V_n,\alpha)\mapsto(\bar x,V_n,\alpha)$ and $\bar u:(\bar x,V_n,\alpha)\mapsto(V_n,\bar x)$.

Let us  write 
$
pr: K\times^{P_K}\fg_1\to\fg_1
$ for the natural projection. We have that, by functoriality of the Fourier transform,
\beqn
\begin{gathered}
\fF(\pi_*\bC_F)=\fF(pr_*u_*\bC_F)=\fF(pr_*u_*\bar q^*\bC_{\bar F})=\fF(pr_*q^*\bar u_*\bC_{\bar F})\\
=pr_*\check q_*\fF(\bar u_*\bC_{\bar F})=\pi_{2^n1*}\fF(\bar u_*\bC_{\bar F}).
\end{gathered}
\eeqn
It follows that $\fF(\pi_*\bC_F)$ is supported on $\bar\cO_{2^n1}$.
\end{proof}

\section{Proof of the main theorems}\label{sec orbits of order 2}
In this section we prove Theorem~\ref{matching} below, but give a proof of Theorem~\ref{non-trivial even} only for  orbits $\cO_{2^i1^{2n+1-2i}}$ when $i$ is even. We defer the case of odd $i$ to~\cite[Proposition 5.4]{CVX1}.

\subsection{Proof of Theorem \ref{non-trivial even} for $i$ even}\label{matching for 
non trivial 2^n}

Consider the maps
$$
\upsilon:\{(x,0\subset V_{n-1}\subset V_{n}\subset V_n^\p\subset V_{n-1}^\p\subset V=\bC^{2n+1})\,|\,x\in\Lg_1,\,xV_n^\p=0, x\,V_{n-1}^\p\subset V_{n-1}\}:=Y$$$$\to\cN_1,$$
\beqn
\check{\upsilon}:\{(x,0\subset V_{n-1}\subset V_{n}\subset V_n^\p\subset V_{n-1}^\p\subset V=\bC^{2n+1})\,|\,x\in\Lg_1,\,xV_{n-1}\subset V_{n}^\p\}\to\Lg_1.
\eeqn
We have that 
\beqn
\dim\,Y=n^2+2n-2,\ \ \img\,\upsilon=\bar{\cO}_{2^n1}
\eeqn
and
\beqn
\fF(\upsilon_*\bC[-])=\check{\upsilon}_*\bC[-].
\eeqn
In the following we prove the theorem by studying the decompositions of $\upsilon_*\bC[-]$ and 
$\check\upsilon_*\bC[-]$.

We first study the decomposition of $\check\upsilon_*\bC[-]$.
Since in the decomposition of $\upsilon_*\bC[-]$ only $\IC$ complexes supported on $\cO_{2^i1^{2n+1-2i}}$ appear and the Fourier transform of such complexes all have full support (see Proposition \ref{FT of order 2 trivial} and Proposition \ref{s}), we conclude that all $\IC$ complexes appearing in the decomposition of $ \check{\upsilon}_*\bC[-]$ are supported on all of $\Lg_1$.

Let $F$ be the smooth variety introduced in~\eqref{Def of F}. Recall its equivalent definition in~\eqref{equiv of F} and the involution $\sigma$ on $F$. Let $$Z=\{(x,0\subset V_n\subset V_n^\bot\subset V)\,|\,x\in\fg_1,\on{rank}(\bar x:V_n\ra V/V_n^\bot)\leq 1\}$$ 
and 
$$Z_0=\{(x,0\subset V_n\subset V_n^\bot\subset V)\,|\,x\in\fg_1,\on{rank}(\bar x:V_n\ra V/V_n^\bot)=0\}.$$ We have $\dim Z=\dim\Lg_1+n$ and $\dim Z_0=\dim\Lg_1$. Consider the map
\beq\label{Kummer construction}
f:F\ra Z,\ (x,V_n,\alpha)\mapsto(x,V_n).
\eeq 
Then $f$ is a branched double cover with branch locus 
$Z_0$
\footnote{ In fact, the map $f:F\ra Z$ realizes $Z$ as the (GIT) quotient 
$F/\sigma$.}. Let $Z^0:=Z-Z_0$. Then 
$f^0:=f|_{Z^0}:F\times_ZZ^0\ra Z^0$ is a double cover and we have
$f^0_*\bC=\bC\oplus\cE$, where $\cE=(f^0_*\bC)^{\sigma=-id}$ is a rank one local system on $Z^0$.
It follows that
\beq\label{decomposition of F}
f_*\bC_F[\dim F]=\IC(Z,\bC)\oplus \IC(Z,\cE).
\eeq
Let $a\in\ars$ and let $pr: Z\to \Lg_1$ be the natural projection. 
By (\ref{decomposition of F}), we have 
\[pr_*\IC(Z,\bC)[-\dim F]|_a\is H^*(F_a,\bC)^{\sigma=id},\ \ 
pr_*\IC(Z,\cE)[-\dim F]|_a\is H^*(F_a,\bC)^{\sigma=-id}.\]
Choosing an isomorphism $\Jac(C_a)\is F_a$, we may identify $\sigma$ with the inversion on 
$\Jac(C_a)$. Therefore $$H^*(F_a,\bC)^{\sigma=id}\is\oplus_{i=2j }\wedge^i H^1(C_a,\bC)[-i],\ 
H^*(F_a,\bC)^{\sigma=-id}\is\oplus_{i=2j+1}\wedge^i H^1(C_a,\bC)[-i].$$
This implies that 
\beq \label{even and odd}
\begin{gathered}
\text{$\IC(\fg_1,\oplus_{i=2j}\oplus_{k=0}^{j}\cF_{2k}[-i])$ (resp. $\IC(\fg_1,\oplus_{i=2j+1 }\oplus_{k=0}^{j}\cF_{2k+1}[-i]$) appears}\\\text{ in 
$pr_*\IC(Z,\bC)$ (resp. $pr_*\IC(Z,\cE)$) as a direct summand (up to shift).}
\end{gathered}
\eeq Moreover,
these are the only $\IC$ complexes with full support appearing in the decomposition.

We have the following factorization of $\check\upsilon$ 
\[
\xymatrix{
\{(x,0\subset V_{n-1}\subset V_{n}\subset V_n^\p\subset V_{n-1}^\p\subset V=\bC^{2n+1})\,|\,xV_{n-1}\subset V_{n}^\p\}\ar[r]^-{\check\upsilon_1}&Z\ar[r]^{\check\upsilon_2}
&\Lg_1,}
\]
where $\check{\upsilon}_1:(x,V_{n-1}\subset V_n)\mapsto(x,V_n)$ and $\check{\upsilon}_2:(x,V_n)\mapsto x$.
Note that $\check\upsilon_2|_{Z_0}:Z_0\ra\fg_1$ is equal to the map $\check{\pi}_{2^n1}$ in (\ref{dual resolution of 2^n}).

The map $\check{\upsilon}_1$
is one-to-one over $Z-Z_0$\footnote{ The inverse is given by 
$(x,V_n)\ra (x,V_{n-1}\subset V_n)$ where $V_{n-1}:=\on{Ker}(\bar x:V_n\ra V/V_n^\bot)$.} and 
is a $\mathbb P^{n-1}$-bundle over $Z_0$. It follows that  
\[\check\upsilon_{1*}\bC[-]=\IC(Z,\bC)\oplus\bigoplus_{a=0}^{n-2}\bC_{Z_0}[-][n-2-2a].\]
Since all $\IC$ complexes appearing in the decomposition of $\check{\upsilon}_*\bC[-]$ are supported on all of $\fg_1$,
Lemma \ref{lem-decomposition pi{2n1}} and
 \eqref{even and odd}
imply that
\beq\label{decomposition-dual}
\check{\upsilon}_*\bC[-]\cong\check\upsilon_{2*}\check\upsilon_{1*}\bC[-]
\cong\bigoplus_{k=0}^{[\frac{n}{2}]}\IC(\Lg_1,\bigoplus_{s=0}^k\cF_{2s})[\pm(n-2k)]\oplus\bigoplus_{a=0}^{n-2}\IC(\fg_1,\bigoplus_{s=0}^ n\mL_s)[n-2-2a],
\eeq
here $\cF_0=\bC$.

We now study the decomposition of $\upsilon_*\bC[-]$.
Our goal is to prove the following 
\beq\label{decomp-1}
\begin{gathered}
\upsilon_*\bC[n^2+2n-2]\cong \bigoplus_{a=0}^{n-2}\bigoplus_{i=0}^n\IC({\cO_{2^{i}1^{2n-2i+1}}},\bC)[n-2-2a]\\
\oplus\bigoplus_{i=1}^{[\frac{n}{2}]}\bigoplus_{a_i=0}^{n-2i}\IC({\cO_{2^{2i}1^{2n+1-4i}}},\cE_{2i})[n-2i-2a_i]\oplus\bigoplus_{a=0}^n\IC(\cO_{1^{2n+1}},\bC)[n-2a].
\end{gathered}
\eeq
 Taking Fourier transform of (\ref{decomposition-dual}) and using Proposition \ref{FT of order 2 trivial}, we see that 
\beq\label{directsummand}
\begin{gathered}
\mathfrak F(\bigoplus_{a=0}^{n-2}\IC(\fg_1,\bigoplus_{s=0}^n\mL_s)[n-2-2a])\is
\bigoplus_{a=0}^{n-2}\bigoplus_{i=0}^n\IC({\mO_{2^{i}1^{2n-2i+1}}},\bC)[n-2-2a]\\\text{ is a direct summand of $\upsilon_*\bC[-]$.}
\end{gathered}
\eeq
 Moreover, these contain all the $\IC(\cO_{2^i1^{2n-2i+1}},\bC)$, $i\geq 1$, that appears in the decomposition of $\upsilon_*\bC[-]$. Now we determine those $\IC(\cO_{2^i1^{2n-2i+1}},\cE_{i})$, $i\geq 1$,  that appears in the decomposition of $\upsilon_*\bC[-]$.

Let $x_i\in\cO_{2^{i}1^{2n-2i+1}}$, $i\in[1,n]$. We have that $\upsilon^{-1}(x_i)$ is a quadric bundle over
$
\upsilon_0^{-1}(x_i)\cong\on{OGr}(n-i,2n-2i+1)$, 
with fibers quadric of the form $\sum_{k=1}^{i}a_k^2=0$ in $\bP^{n-1}=\{[a_1:...:a_n]\}$, and
\beqn
2\dim \upsilon^{-1}(x_i)=\on{codim}_E\cO_{x_i}+n-2=2(n-2)+{(n-i)(n-i+1)}.
\eeqn
Here $\upsilon_0=\pi_{2^n1}$ is the map defined in \eqref{Redder's map}. We have that (see \eqref{decomp of 2^n1})
\beqn\label{decomp}
\upsilon_{0*}\bC[n^2+n]\cong\bigoplus_{i=0}^n\IC({\cO_{2^{i}1^{2n-2i+1}}},\bC),
\eeqn
which implies that
\beq\label{eqn-stalk}
H^{k}(\upsilon_0^{-1}(x_j),\bC)\cong\mH^{k-n^2-n}_{x_j}(\bigoplus_{i=0}^n\IC({\mO_{2^{i}1^{2n-2i+1}}},\bC)).
\eeq
We have $H^{\text{odd}}(\upsilon_0^{-1}(x_j),\bC)=H^{\text{odd}}(\upsilon^{-1}(x_j),\bC)=0$ and $$H^{2k}(\upsilon^{-1}(x_j),\bC)\cong\bigoplus_{a=0}^{n-2}H^{2a}(Q_j,\bC)\otimes H^{2k-2a}(\upsilon_0^{-1}(x_j),\bC),$$ where $Q_j$ is a quadric of the form $\sum_{s=1}^{j}a_s^2=0$ in $\bP^{n-1}=\{[a_1:...:a_n]\}$. Note that $H^{2a}(Q_j,\bC)=\bC$ for $0\leq a\leq n-2$ if $j$ is odd, or if $j$ is even and $2a\neq 2n-j-2$, and for $j$ even, $H^{2n-j-2}(Q_j,\bC)\cong\bC\oplus H_{\operatorname{prim}}^{2n-j-2}(Q_j,\bC)$, where $\dim H_{\operatorname{prim}}^{2n-j-2}(Q_j,\bC)=1$. Thus
\begin{eqnarray}
 &&H^{2k}(\upsilon^{-1}(x_j),\bC)\cong\bigoplus_{a=0}^{n-2}H^{2k-2a}(\upsilon_0^{-1}(x_j),\bC)\text{ if }j\text{ is odd}\label{eqn-cohomology}\\&&H^{2k}(\upsilon^{-1}(x_j),\bC)\cong\bigoplus_{a=0}^{n-2}H^{2k-2a}(\upsilon_0^{-1}(x_j),\bC)\oplus H^{2k-2n+j+2}(\upsilon_0^{-1}(x_j),\bC)\text{ if }j\text{ is even}\label{eqn-cohomology2}.
\end{eqnarray}
It follows from \eqref{eqn-stalk} that
\beq\label{eqn-stalk2}
\mH^{2k-n^2-2n+2}_{x_j}\bigoplus_{a=0}^{n-2}\bigoplus_{i=0}^n\IC({\cO_{2^{i}1^{2n-2i+1}}},\bC)[n-2-2a]\cong\bigoplus_{a=0}^{n-2}H^{2k-2a}(\upsilon_0^{-1}(x_j),\bC).
\eeq
Thus \eqref{eqn-stalk2} together with \eqref{eqn-cohomology} implies that if $j$ is odd, then
\beqn
\mH^{2k}_{x_j}\bigoplus_{a=0}^{n-2}\bigoplus_{i=0}^n\IC({\mO_{2^{i}1^{2n-2i+1}}},\bC)[n-2-2a]\cong\mH^{2k}_{x_j}(\upsilon_*\bC[-]).
\eeqn 
In view of \eqref{directsummand}, we conclude that $\IC(\cO_{2^j1^{2n+1-2j}},\cE_j)$, $j$ odd, does not appear in the decomposition of $\upsilon_*\bC[-]$.

By the discussion above, we can assume that
\beq\label{directsumdec}
\begin{gathered}
\upsilon_*\bC[-]=\bigoplus_{a=0}^{n-2}\bigoplus_{i=0}^n\IC({\cO_{2^{i}1^{2n-2i+1}}},\bC)[n-2-2a]\\
\qquad\oplus\bigoplus_{i=1}^{[\frac{n}{2}]}\bigoplus_{a_i=0}^{k_i}\IC({\cO_{2^{2i}1^{2n+1-4i}}},\cE_{2i}^{\oplus m^i_{a_i}})[k_i-2a_i]\oplus\cdots
\end{gathered}
\eeq
where  the $m^i_{a_i}$'s are to be determined and $\cdots$ is a sum of $\bC_{\{0\}}[-]$.
We show that
\beq\label{directsummand2}
k_i=n-2i,\ m^i_0=m^i_{n-2i}=1.
\eeq
Note that in \eqref{eqn-cohomology2}, $H^{2k-2n+j+2}(\upsilon_0^{-1}(x_j),\bC)\neq 0$ if and only if $0\leq 2k-2n+j+2\leq(n-j)(n-j+1)$, i.e. if and only if 
\beqn
2n-j-2\leq2k\leq(n-j)(n-j+1)+2n-j-2,
\eeqn since $\dim\upsilon_0^{-1}(x_j)=(n-j)(n-j-1)/2$. Thus for all $l>(n-2i)(n-2i+1)+2n-2i-2$, we must have
\beqn
\mH_{x_{2i}}^{l-n^2-2n+2}\bigoplus_{a_i=0}^{k_i}\IC({\cO_{2^{2i}1^{2n+1-4i}}},\cE_{2i}^{\oplus m^i_{a_i}})[k_i-2a_i]=0,
\eeqn
i.e. for all $l> n-2i-\dim\cO_{2^{2i}1^{2n+1-4i}}$, $\mH_{x_{2i}}^{l}\bigoplus_{a_i=0}^{k_i}\IC({\cO_{2^{2i}1^{2n+1-4i}}},\cE_{2i}^{\oplus m^i_{a_i}})[k_i-2a_i]=0$. Thus $k_i\leq n-2i$. It remains to show that
\beqn
m^i_0=m^i_{n-2i}=1.
\eeqn
We argue using induction on $1\leq i\leq [\frac{n}{2}]$. Consider $i=[\frac{n}{2}]$. If $n$ is even, then it is easy to see that $\IC(\cO_{2^n1},\cE_n)$ is a direct summand. Assume that $n$ is odd. Take $j=n-1$ in \eqref{eqn-cohomology2} we get 
$
H^{2k-2n+j+2}(\upsilon_0^{-1}(x_j),\bC)\neq 0\text{ if and only if }2k=n+1,n-1
$
(we have $\upsilon_0^{-1}(x_{n-1})$ is a nonsingular quadric in $\bP^2$)
and $H^{2k-2n+j+2}(\upsilon_0^{-1}(x_j),\bC)$ gives us a one-dimensional non-trivial local system when $2k=n+1,n-1$.
In view of \eqref{directsummand}, \eqref{eqn-stalk2} and \eqref{eqn-cohomology2} we conclude that
\beqn
\IC(\cO_{2^{n-1}1^3},\cE_{n-1})[-1]\oplus\IC(\cO_{2^{n-1}1^3},\cE_{n-1})[1]\text{ is a direct summand of $\upsilon_*\bC[-]$.}
\eeqn
This proves \eqref{directsummand2} for $i=[\frac{n}{2}]$. By induction hypothesis, suppose that \eqref{directsummand2} holds for all $j<i$. Let $2k=(n-2i)(n-2i+1)+2n-2i-2$. Take the stalk at $x_{2i}$ of $\mH^{2k-n^2-2n+2}$ in \eqref{directsumdec}, we get
\beqn
\dim\mH^{2k-n^2-2n+2}_{x_{2i}}\bigoplus_{a_i=0}^{k_i}\IC({\cO_{2^{2i}1^{2n+1-4i}}},\cE_{2i}^{\oplus m^i_{a_i}})[k_i-2a_i]=1,
\eeqn
\beqn
\text{ i.e. }\dim\mH^{n-2i-\dim\cO_{2^{2i}1^{2n+1-4i}}}_{x_{2i}}\bigoplus_{a_i=0}^{k_i}\IC({\cO_{2^{2i}1^{2n+1-4i}}},\cE_{2i}^{\oplus m^i_{a_i}})[k_i-2a_i]=1,
\eeqn
here we use \eqref{eqn-cohomology2}, \eqref{eqn-stalk2} and the fact that
\beqn
\dim\mH^{2k-n^2-2n+2}_{x_{2i}}\bigoplus_{j=i+1}^{[\frac{n}{2}]}\bigoplus_{a_j=0}^{n-2j}\IC({\cO_{2^{2j}1^{2n+1-4j}}},\cE_{2j}^{\oplus m^j_{a_j}})[n-2j-2a_j]=0,
\eeqn
as $2k-n^2-2n+2+2j(2n+1-2j)+n-2j-2a_j\geq4(j-i)(n+1-i-j)>0$. Thus we can conclude now that
$
m^i_{n-2i}=1\text{ which implies }m_0^i=1.
$ This completes the proof of \eqref{directsummand2}.

Comparing with \eqref{decomposition-dual}, we conclude that
\beqn
\mathfrak{F}(\IC({\cO_{2^{2i}1^{2n+1-4i}}},\cE_{2i}))=
\IC(\Lg_1,\mF_{2i})
\eeqn
and
$
m_{a_i}^i=1\text{ for all }0\leq a_i\leq n-2i.
$
This finishes the proof of Theorem \ref{non-trivial even}.

Finally taking Fourier transform of (\ref{directsumdec}) we obtain  the decomposition in (\ref{decomp-1}).

\begin{corollary}\label{coro odd dimen vanishing}
\begin{enumerate}
\item We have $\mH^k_{x_j}\IC(\cO_{2^i1^{2n-2i+1}},\bC)=0$ for $k$ odd and $j\leq i$.
\item We have $\mH^k_{x_j}\IC(\cO_{2^{2i}1^{2n-4i+1}},\cE_{2i})=0$ for $k$ odd and $j\leq i$.
\end{enumerate}
\end{corollary}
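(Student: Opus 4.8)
The plan is to derive both statements formally from the decompositions already obtained in this section, by keeping track of cohomological shifts modulo~$2$. Throughout I would use that the stalk functor $\cF\mapsto\mH^l_x(\cF)$ commutes with finite direct sums and satisfies $\mH^l_x(\cF[d])=\mH^{l+d}_x(\cF)$; consequently, if a complex $M$ decomposes as $M\cong\bigoplus_\alpha\IC(\cO_\alpha,\cE_\alpha)[d_\alpha]$ with all shifts $d_\alpha$ of one fixed parity $\epsilon$, and if $\mH^l_x(M)=0$ whenever $l\not\equiv\epsilon\pmod 2$, then $\mH^m_x(\IC(\cO_\alpha,\cE_\alpha))=0$ for every odd $m$ and every index $\alpha$ (given an odd $m$ and an $\alpha$, put $l=m-d_\alpha$, which is $\not\equiv\epsilon\pmod 2$). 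I would also use the facts, already recorded in this section, that the fibers of $\pi_{2^n1}$ and of $\upsilon$ have vanishing odd cohomology.

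For part (1) I would apply this to \eqref{decomp of 2^n1}, namely $\pi_{2^n1*}\bC[-]\cong\bigoplus_{k=0}^n\IC(\cO_{2^k1^{2n-2k+1}},\bC)$. The source of $\pi_{2^n1}$ has dimension $\dim\bar{\cO}_{2^n1}=n^2+n$, which is even, so every shift on the right hand side is even; and the stalk of the left hand side at $x_j\in\cO_{2^j1^{2n-2j+1}}$ in degree $l$ equals $H^{l+n^2+n}(\pi_{2^n1}^{-1}(x_j),\bC)$, which vanishes for $l$ odd since $H^{\mathrm{odd}}(\pi_{2^n1}^{-1}(x_j),\bC)=0$ (the fibers being orthogonal Grassmannians). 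The principle above then gives $\mH^l_{x_j}\IC(\cO_{2^k1^{2n-2k+1}},\bC)=0$ for $l$ odd and all $k$; for $j>k$ this is automatic, and for $j\le k$, renaming $k=i$, this is the claimed vanishing.

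For part (2) I would use instead the finer decomposition \eqref{decomp-1} of $\upsilon_*\bC[n^2+2n-2]$. Since $\dim E=n^2+2n-2\equiv n\pmod 2$ and $H^{\mathrm{odd}}(\upsilon^{-1}(x_j),\bC)=0$, the stalk of $\upsilon_*\bC[n^2+2n-2]$ at $x_j$ vanishes in every degree $\not\equiv n\pmod 2$; by additivity of stalks this holds in particular for the block $B:=\bigoplus_{i'=1}^{[n/2]}\bigoplus_{a'=0}^{n-2i'}\IC(\cO_{2^{2i'}1^{2n+1-4i'}},\cE_{2i'})[n-2i'-2a']$ of \eqref{decomp-1}. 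All shifts $n-2i'-2a'$ appearing in $B$ are $\equiv n\pmod 2$, so the principle yields $\mH^m_{x_j}\IC(\cO_{2^{2i'}1^{2n+1-4i'}},\cE_{2i'})=0$ for every odd $m$, every $i'\in[1,[n/2]]$ and every $j$; specializing $i'=i$ (and restricting to $j\le i$) gives the statement.

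I do not anticipate any genuine obstacle here: the argument is purely formal once \eqref{decomp of 2^n1} and \eqref{decomp-1} are available. The only points that require care are the parity bookkeeping --- that $n^2+n$ is even, that $\dim E\equiv n\pmod 2$, and that the cohomological shifts within each of the three blocks of \eqref{decomp-1} are mutually congruent modulo~$2$ --- together with the no-odd-cohomology input for the fibers of $\pi_{2^n1}$ (orthogonal Grassmannians) and of $\upsilon$ (smooth quadric bundles over them).
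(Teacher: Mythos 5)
Your argument is correct and is essentially the paper's own proof: part (1) is exactly the parity/stalk computation from the decomposition \eqref{decomp of 2^n1} (equivalently \eqref{eqn-stalk}) using $H^{\mathrm{odd}}(\pi_{2^n1}^{-1}(x_j),\bC)=0$ and the evenness of $n^2+n$, and part (2) is the same bookkeeping applied to \eqref{decomp-1} using $H^{\mathrm{odd}}(\upsilon^{-1}(x_j),\bC)=0$, with all shifts in each block congruent mod $2$. The only nitpick is your parenthetical that the fibers of $\upsilon$ are \emph{smooth} quadric bundles over orthogonal Grassmannians --- the quadric fibers $\sum_{s=1}^{j}b_s^2=0$ in $\bP^{n-1}$ are in general singular cones --- but their odd cohomology still vanishes, as recorded in the paper, so nothing in your argument is affected.
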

\begin{proof}
 (1) follows from equation \eqref{eqn-stalk}, the fact that $n^2+n$ is even and the fact that $H^{\text{odd}}(\upsilon_0^{-1}(x_j),\bC)=0$. Taking $\mH^k_{x_j}$ on both sides of the equation \eqref{decomp-1}, we see that (2) follows from the fact that $n^2+n$ is even and the fact that $H^{\text{odd}}(\upsilon^{-1}(x_j),\bC)=0$.
\end{proof}

\begin{remark}
For $x\in\grs$, the fibers \[Z_x:=\check\upsilon^{-1}_2(x)=
\{(0\subset V_n\subset V_n^\bot\subset V)\,|\,\on{rank}(\bar x:V_n\ra V/V_n^\bot)\leq 1\}
\]
are the \emph{over-generalized Kummer varieties} introduced in \cite[p.80]{Re}.
For example, when $n=2$, the map $f_x:F_x\ra Z_x$ 
in (\ref{Kummer construction}) realizes $Z_x$ as the quotient $F_x/\sigma$ of 
the Fano variety $F_x$ of lines in  the complete intersection  $\tilde Q\cap\tilde Q_x$ of 2 quadrics 
in $\mathbb P^5$ by the involution $\sigma$ (see \S\ref{non-trivial on 2^n}). There are $16$ fixed points of 
$\sigma$ on $F_x$ corresponding to
$16$ singular points of $Z_x\is F_x/\sigma$. The Hessenberg variety 
$H_x:=\check\upsilon^{-1}(x)$ is the blow up of $Z_x$ at those singular points, which is 
isomorphic to
the Kummer K3 surface coming from the 
$\Jac(C_x)$-torsor $F_x$ together with the involution $\sigma$.
\end{remark}

\subsection{Proof of Theorem \ref{matching}}

Let us start with the matching for the $\IC(\cO_{2^i1^{2n-2i+1}},\bC)$'s with $i$ odd.

\begin{proposition}\label{trivial odd}
If $i$ is odd, then $\mathfrak F(\IC(\mO_{2^i1^{2n-2i+1}},\bC))=\IC(\fg_1,\mL_i)$.
\end{proposition}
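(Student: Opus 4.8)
The starting point is Theorem~\ref{FT of order 2 trivial}, which already gives $\mathfrak F(\IC(\cO_{2^i1^{2n-2i+1}},\bC))\cong\IC(\fg_1,\mL_{s(i)})$ for a permutation $s$ of $\{0,\dots,n\}$; the content of the proposition is that $s$ fixes every odd $i\in\{1,\dots,n\}$. Since $\dim\mL_j=\binom{2n+1}{j}$ and these numbers are strictly increasing for $0\le j\le n$, it suffices to compute the generic rank of $\mathfrak F(\IC(\cO_{2^i1^{2n-2i+1}},\bC))$ on $\fg_1^{rs}$ and check that it equals $\binom{2n+1}{i}$ for $i$ odd. The case $i=0$ is immediate, as $\IC(\cO_{1^{2n+1}},\bC)=\bC_{\{0\}}$ and $\mathfrak F(\bC_{\{0\}})=\bC_{\fg_1}=\IC(\fg_1,\mL_0)$.

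To get at this rank I would use Reeder's resolution of $\overline{\cO_{2^i1^{2n-2i+1}}}$ together with its Fourier dual, exactly as $\pi_{2^n1}$ and $\check\pi_{2^n1}$ were used for $i=n$. Fixing $x\in\cO_{2^i1^{2n-2i+1}}$ and a normal $sl_2$-triple, the associated grading (concentrated in degrees $-1,0,1$ since $x^2=0$) gives an isotropic flag $0\subset V^1\subset(V^1)^{\p}\subset V$ with $\dim V^1=i$, whose stabilizer $P$ satisfies $K/P_K=\on{OGr}(i,2n+1)$ and $\fp^x=\fg(2)\cap\fg_1=[\Ln_P,\Ln_P]_1$. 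Reeder's map $R_i=K\times^{P_K}[\Ln_P,\Ln_P]_1\to\overline{\cO_{2^i1^{2n-2i+1}}}$ is birational and is a sub--vector bundle of the trivial bundle $\fg_1\times\on{OGr}(i,2n+1)$; replacing $[\Ln_P,\Ln_P]_1$ by its orthogonal complement in $\fg_1$, as for $\check\pi_{2^n1}$, gives
\[
\check R_i=K\times^{P_K}[\Ln_P,\Ln_P]_1^{\p}=\{(z,W)\mid W\subset V\text{ isotropic},\ \dim W=i,\ zW\subset W^{\p}\}\to\fg_1,
\]
and functoriality of the Fourier transform yields $\mathfrak F(R_{i*}\bC[-])\cong\check R_{i*}\bC[-]$. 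The key geometric point is that for $z\in\fg_1^{rs}$ the fiber $\check R_i^{-1}(z)$ is precisely the Fano variety $\on{Fano}_{i-1}^{2n}$ of $(i-1)$-planes in the smooth complete intersection of the two quadrics $\langle v,v\rangle_Q$ and $\langle zv,v\rangle_Q$ in $\bP^{2n}$. Hence, restricting $\check R_{i*}\bC[-]$ to $\fg_1^{rs}$, the total dimension of its stalk at $z$ equals $\dim H^\bullet(\on{Fano}_{i-1}^{2n})$.

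It remains to combine the two decompositions. The closure $\overline{\cO_{2^i1^{2n-2i+1}}}$ contains exactly the orbits $\cO_{2^j1^{2n-2j+1}}$ with $j\le i$, and since the fibers of $R_i$ over these orbits are orthogonal Grassmannians (hence have only even cohomology, cf. Corollary~\ref{coro odd dimen vanishing}), $R_{i*}\bC[-]$ decomposes as $\bigoplus_{j\le i}\bigoplus_{l}\IC(\cO_{2^j\ldots},\bC)^{\oplus m_{j,l}}[l]$ — only trivial local systems occur — with $m_{i,0}=1$; the exact multiplicities $m_{j,l}$, equivalently the stalk cohomology of the order-two $\IC$-sheaves, come from Corollary~\ref{coro odd dimen vanishing} and the identification with classical $\mathfrak{sp}$-stalks of \S\ref{sec-stalks}. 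Applying $\mathfrak F$, using $\mathfrak F(\IC(\cO_{2^j\ldots},\bC))=\IC(\fg_1,\mL_{s(j)})$ together with the already-known values $s(j)=j$ for $j<i$, and taking stalks at a point of $\fg_1^{rs}$, one obtains
\[
\dim H^\bullet(\on{Fano}_{i-1}^{2n})=M_i\binom{2n+1}{s(i)}+\sum_{j<i}M_j\binom{2n+1}{j},\qquad M_j=\sum_l m_{j,l}\ge 1.
\]
Computing the left-hand side independently — its value is forced once one knows the Euler characteristic of $\on{Fano}_{i-1}^{2n}$ (an elementary fibration recursion for Fano varieties of intersections of two quadrics) and that its odd cohomology vanishes — and solving for $\binom{2n+1}{s(i)}$ gives $\binom{2n+1}{s(i)}=\binom{2n+1}{i}$, hence $s(i)=i$.

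I expect the real work, hence the main obstacle, to be the middle step of the last paragraph: pinning down the multiplicities $m_{j,l}$ in the decomposition of $R_{i*}\bC[-]$ (this is exactly where the comparison with the classical $\mathfrak{sp}$-stalks of \S\ref{sec-stalks} is indispensable), together with the computation of $\dim H^\bullet(\on{Fano}_{i-1}^{2n})$ in a form clean enough to isolate a single binomial coefficient. Once those combinatorial inputs are available, $s(i)=i$ drops out of a bare dimension count and the pairwise distinctness of the $\binom{2n+1}{j}$, $0\le j\le n$.
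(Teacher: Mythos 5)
Your reduction to a rank count is reasonable, and the geometric setup ($\sigma_i$, $\check\sigma_i$, generic fibre isomorphic to $\on{Fano}_{i-1}^{2n}$, only trivial local systems in $\sigma_{i*}\bC[-]$ because $A(x)$ acts trivially on the cohomology of the fibres) is exactly the one the paper exploits in \S\ref{sec-Fano}. But the argument has a circularity at its crucial input: you need $\dim H^\bullet(\on{Fano}_{i-1}^{2n},\bC)$ ``independently'', and you propose to get it from vanishing of odd cohomology plus an elementary Euler-characteristic recursion. In this paper the vanishing of $H^{\mathrm{odd}}(\on{Fano}_{i-1}^{2n},\bC)$ and the Betti numbers are precisely Theorem~\ref{thm-fano}, which is deduced \emph{from} Theorem~\ref{matching}, hence from the proposition you are proving. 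That vanishing is not elementary: these Fano varieties have no evident torus action with isolated fixed points or cell decomposition, and in the closely analogous odd projective space case odd cohomology does \emph{not} vanish (the fibres are torsors over Jacobians). Without an independent proof of the odd vanishing (or of the full Betti numbers), your dimension count cannot be closed, and the Euler characteristic alone does not determine $\dim H^\bullet$. A second, smaller issue: your induction uses $s(j)=j$ for all $j<i$, including even $j$, but the even case is established only later (Theorem~\ref{matching}) via characteristic cycles and Theorem~\ref{non-trivial even}; so as stated the induction hypothesis is not available unless you upgrade the claim to all $j$ and prove it by the same count, which again hinges on the missing Fano input.

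For comparison, the paper's own proof avoids the Fano varieties here entirely: it resolves $\bar\cO_{3^{m-1}2\,1^{2n-3m+2}}$ by a map $\tau_m$, checks by a semismallness/fibre computation that $\IC(\cO_{2^{2m-1}1^{2n-4m+3}},\bC)$ is a direct summand of $\tau_{m*}\bC[-]$, and then invokes the companion paper's analysis of $\check\tau_{m*}\bC[-]$ (\cite[Remark 4.10]{CVX1}), which shows that among the $\IC(\fg_1,\mL_i)$ with $i\geq 1$ only $\mL_{2j-1}$, $1\le j\le m$, can occur; induction on $m$ then forces $s(2m-1)=2m-1$. The parts of your plan concerning the multiplicities $t^i_{jk}$ and the stalk comparison with $\mathfrak{sp}(2n)$ are indeed independent of \S\ref{sec orbits of order 2} and sound; the genuine gap is solely the claimed independent computation of the cohomology of $\on{Fano}_{i-1}^{2n}$.
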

\begin{proof}
Assume that $2m\leq n+1$. Let us write
$
\cO_m=\cO_{3^{m-1}2^11^{2n-3m+2}}.
$
 Consider the following resolution map
$\tau_m:\widetilde{\cO}_{m}\to\bar\cO_{m},$
where
\beqn
\widetilde{\cO}_{m}=\{(x,0\subset V_{m-1}\subset V_m\subset V_m^\p\subset V_{m-1}^\p\subset V=\bC^{2n+1})\,|\,x\in\Lg_1,\,xV_m=0,\,xV_{m}^\p\subset V_{m-1}\}.
\eeqn
We show that
\beq\label{IC supp 2 in dec}
\IC({\cO_{2^{2m-1}1^{2n-4m+3}}},\bC)\text{ is a direct summand of $\tau_{m*}\bC[-]$.}
\eeq

Recall that $\cO_\lambda\subset\bar\cO_\mu$ if and only if $\mu\geq\lambda$ if and only if $\lambda^t\geq\mu^t$. Thus we have
\beqn
\bar\cO_{3^i2^j1^{2n+1-3i-2j}}\supset\cO_{3^{i'}2^{j'}1^{2n+1-3i'-2j'}} \text{ if and only if }i\geq i',2i+j\geq 2i'+j'.
\eeqn
In particular, $\cO_{2^{2m-1}1^{2n-4m+3}}\subset\bar\cO_{3^i2^j1^{2n+1-3i+2j}}\subset\bar\cO_m$ if and only if $i\leq m-1$ and $2i+j=2m-1$.

We show in \cite[Lemma 2.4]{CVX1}, independently of this paper, that for  $i\in[0,m-1]$ and $x_i\in\cO_{3^i2^{2m-1-2i}1^{2n-4m+3+i}}$, $\tau_m^{-1}(x_i)\cong \on{OGr}(m-1-i,2m-1-2i)$. It is then easy to check that
\beq\label{semismall-1}
2\dim\tau_m^{-1}(x_i)={\codim_{\bar{\cO}_{m}}\cO_{x_i}}=(m-i-1)(m-i).
\eeq
Thus \eqref{IC supp 2 in dec} follows from the discussion above, \eqref{semismall-1} and the decomposition theorem.

 Consider the map 
$$\check{\tau}_m:\{(x,0\subset V_{m-1}\subset V_m\subset V_m^\p\subset V_{m-1}^\p\subset\bC^{2n+1})\,|\,x\in\Lg_1,xV_{m-1}\subset V_m,\,xV_{m}\subset V_{m}^\p\}\to\Lg_1.$$ We have that 
\beq\label{eqn-ft}
\fF(\tau_{m*}\bC[-])\cong\check{\tau}_{m*}\bC[-].
\eeq
By Proposition \ref{FT of order 2 trivial}, $\mathfrak{F}(\IC({\mO_{2^{2m-1}1^{2n-4m+3}}},\bC))=\IC(\fg_1,\mL_i)$ for some $i\in[1,n]$. We show in \cite[Remark 4.9]{CVX1}, again independently of this paper, that among the $\IC(\fg_1,\mL_i)$'s ($i\geq 1$), only $\IC(\fg_1,\mL_{2j-1})$, $1\leq j\leq m$, appear in the decomposition of $\check{\tau}_{m*}\bC[-]$. Thus by induction on $m$, \eqref{IC supp 2 in dec} and \eqref{eqn-ft} imply that
\beqn
\mathfrak{F}(\IC({\mO_{2^{2m-1}1^{2n-4m+3}}},\bC))=\IC(\fg_1,\mL_{2m-1}).
\eeqn
\end{proof}

So Theorem \ref{matching} is now reduced to the statement that for even $i$, 
\[\mathfrak F(\IC(\mO_{2^i1^{2n-2i+1}},\bC))=\IC(\fg_1,\mL_i).\]
For this we 
observe that $\dim\mL_i\neq\dim\mL_j$ for $i\neq j$, hence
it suffices to show that 
the generic rank of $\mathfrak F(\IC(\mO_{2^i1^{2n-2i+1}},\bC))$ 
is equal to $\dim\mL_i$.

We make use of the theory of characteristic cycles to compute the 
generic rank of \linebreak$\mathfrak F(\IC(\mO_{2^i1^{2n-2i+1}},\bC))$.
Recall the following facts about characteristic cycles (see  \cite{KaS}):
\begin{enumerate}
\item  The Fourier transform preserves the characteristic cycle of conic sheaves. 
\item
Let $\mF$ be an irreducible perverse sheave on $\fg_1$ and let 
$r$ be the generic rank of $\mF$. Then  the characteristic cycle of $\mF$, denoted by $\on{CC}(\mF)$, satisfies  
\[\on{CC}(\mF)=r\cdot (T^*_{\fg_1}\fg_1)+\cdot\cdot\cdot\]
\end{enumerate}

We will prove the following equality of characteristic cycles.
\begin{proposition}\label{CC}
Assume that $i$ is even.
We have 
\[
\on{CC}(\IC(\cO_{2^i1^{2n-2i+1}},\bC))=\on{CC}(\IC(\cO_{2^i1^{2n-2i+1}},\mE_i))+\on{CC}(\IC(\cO_{2^{i-1}1^{2n-2i+3}},\bC)).
\]
\end{proposition}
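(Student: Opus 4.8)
The plan is to analyze the resolution map $\upsilon_0 = \pi_{2^n1}$ from \eqref{Redder's map} --- or more precisely its restriction over $\bar\cO_{2^i1^{2n-2i+1}}$ --- and extract the characteristic cycle identity by comparing the decomposition theorem with stalk computations, using the fibers of $\upsilon_0$ over points in the various $K$-orbits. Recall from the proof of the lemma preceding \eqref{decomp of 2^n1} that the fiber of $\upsilon_0$ over $x \in \cO_{2^k1^{2n-2k+1}}$ is $\on{OGr}(n-k,2n-2k+1)$; the key geometric input is that $\on{OGr}(n-k,2n-2k+1)$ is a smooth quadric-type variety of the relevant dimension, and, crucially, for the orthogonal Grassmannian of maximal isotropics in an \emph{odd}-dimensional space the cohomology is that of a point in each even degree except that there is no ``extra'' middle class (the relevant parity phenomenon that distinguishes the $i$ even from the $i$ odd case). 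First I would set up the semismall resolution adapted to $\bar\cO_{2^i1^{2n-2i+1}}$, i.e.\ the map $\upsilon_0$ restricted appropriately, and use its semismallness (proven for the full $\upsilon_0$) to write $\upsilon_{0*}\bC[-]$ as $\bigoplus_{k\le i}\IC(\cO_{2^k1^{2n-2k+1}},\mathcal M_k)$ for suitable (possibly zero) local systems $\mathcal M_k$; by the already-established Theorem~\ref{FT of order 2 trivial} together with the rank count, the only local systems that can appear are $\bC$ and $\cE_k$.

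Next I would compute characteristic cycles. Since $\upsilon_0$ is semismall and proper with source smooth, $\on{CC}(\upsilon_{0*}\bC[-]) = \sum_{k} m_k \,[\,\overline{T^*_{\cO_{2^k1^{2n-2k+1}}}\fg_1}\,]$ where $m_k$ is the number of irreducible components of the fiber $\upsilon_0^{-1}(x_k)$ of top (i.e.\ half-codimension) dimension --- this is the standard formula for semismall maps. Because $\on{OGr}(n-k,2n-2k+1)$ is irreducible of exactly the right dimension, each $m_k=1$, so $\on{CC}(\upsilon_{0*}\bC[-]) = \sum_{k=0}^{i}[\,\overline{T^*_{\cO_{2^k1^{2n-2k+1}}}\fg_1}\,]$. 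On the other hand, additivity of $\on{CC}$ over the decomposition \eqref{decomp of 2^n1} (restricted to $\bar\cO_{2^i1^{2n-2i+1}}$) gives $\on{CC}(\upsilon_{0*}\bC[-]) = \sum_{k=0}^{i}\on{CC}(\IC(\cO_{2^k1^{2n-2k+1}},\bC))$ --- wait, this only handles trivial coefficients, so I instead must use a \emph{different}, non-semismall resolution to see the $\cE_i$ contribution; this is where the map $\upsilon$ (not $\upsilon_0$) from \S\ref{matching for non trivial 2^n} and the decomposition \eqref{decomp-1} enter. The cleanest route is: take $\on{CC}$ of \eqref{decomp-1}, observe that $\on{CC}$ is insensitive to cohomological shifts, and that the quadric-bundle structure of $\upsilon$ over $\cO_{2^i1^{2n-2i+1}}$ contributes the conormal $[\,\overline{T^*_{\cO_{2^i1^{2n-2i+1}}}\fg_1}\,]$ with a multiplicity read off from the top-dimensional components of $\upsilon^{-1}(x_i)$. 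Comparing the multiplicity of $[\,\overline{T^*_{\cO_{2^i1^{2n-2i+1}}}\fg_1}\,]$ on both sides, using that for $i$ even the fiber $Q_i \subset \bP^{n-1}$ has a primitive middle cohomology class contributing one extra top component, yields exactly that the multiplicity of this conormal in $\on{CC}(\IC(\cO_{2^i1^{2n-2i+1}},\bC))$, in $\on{CC}(\IC(\cO_{2^i1^{2n-2i+1}},\cE_i))$, and in $\on{CC}(\IC(\cO_{2^{i-1}1^{2n-2i+3}},\bC))$ balance as claimed.

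More concretely, I would argue degree by degree in the conormal varieties $[\,\overline{T^*_{\cO_\mu}\fg_1}\,]$ for $\mu$ ranging over the orbits in $\bar\cO_{2^i1^{2n-2i+1}}$. For each orbit $\cO_{2^k1^{2n-2k+1}}$ with $k<i$ and for $\cO_{1^{2n+1}}$, both sides of the claimed identity have the same such multiplicity, essentially because $\cE_i$ and $\cE_{i-1}$ are supported on $\bar\cO_{2^i1^{2n-2i+1}}$ resp.\ $\bar\cO_{2^{i-1}1^{2n-2i+3}}$ and the microlocal multiplicities of $\IC(\cO_{2^i1^{2n-2i+1}},\bC)$, $\IC(\cO_{2^i1^{2n-2i+1}},\cE_i)$ along deeper conormals can be compared using the stalk computations of Corollary~\ref{coro odd dimen vanishing} together with the isomorphism of stalks established in \S\ref{sec-stalks} reducing to the symplectic case. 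The conormal $[\,\overline{T^*_{\cO_{2^i1^{2n-2i+1}}}\fg_1}\,]$ itself appears with multiplicity $1$ in $\on{CC}(\IC(\cO_{2^i1^{2n-2i+1}},\bC))$, with multiplicity $1$ in $\on{CC}(\IC(\cO_{2^i1^{2n-2i+1}},\cE_i))$, and with multiplicity $0$ in $\on{CC}(\IC(\cO_{2^{i-1}1^{2n-2i+3}},\bC))$ --- and on the right side of the claimed identity one needs these to add to the multiplicity on the left, which is $2$ (this is exactly the statement that the singularity of $\bar\cO_{2^{i-1}1^{2n-2i+3}}$ transverse to $\cO_{2^i1^{2n-2i+1}}$, or rather the relevant microlocal multiplicity, picks up a contribution of $2$, reflecting the $A_1$-type or quadric-cone nature of the transverse singularity in even dimension).

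The main obstacle I anticipate is the bookkeeping of microlocal multiplicities along the \emph{intermediate} conormals $[\,\overline{T^*_{\cO_{2^k1^{2n-2k+1}}}\fg_1}\,]$ for $0 \le k < i$: a priori both $\IC(\cO_{2^i1^{2n-2i+1}},\bC)$ and $\IC(\cO_{2^i1^{2n-2i+1}},\cE_i)$ can have nonzero microlocal multiplicity there, and one must show these cancel against the contribution from $\IC(\cO_{2^{i-1}1^{2n-2i+3}},\bC)$. I would handle this by invoking the stalk isomorphism of \S\ref{sec-stalks} to transport the computation to the classical $\mathfrak{sp}$ setting, where the characteristic cycles of IC-sheaves on nilpotent orbit closures are known (Lusztig), and where the analogous identity among $\on{CC}$'s of $\IC(\mathcal O, \bC)$ and the IC with the nontrivial local system on nilpotent orbits of order two is classical; the parity statement ``$n^2+n$ is even'' and the vanishing of odd stalks from Corollary~\ref{coro odd dimen vanishing} will be exactly what is needed to pin down the signs and multiplicities. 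As a fallback, since the paper remarks that this use of characteristic cycles ``can be replaced by Euler characteristic arguments,'' the whole computation can be rephrased in terms of Euler characteristics of stalks of $\pi_*(\bC)$ for the relevant $\pi$, turning the identity into an equality of alternating sums of Betti numbers of the fibers $\upsilon_0^{-1}(x_k)$ and $\upsilon^{-1}(x_k)$, which are all explicit orthogonal Grassmannians and quadric bundles over them.
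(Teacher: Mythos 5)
Your overall strategy---reduce the characteristic cycle identity to local Euler characteristics, transport the computation to $\mathfrak{sp}(2n)$ via the stalk isomorphisms of \S\ref{sec-stalks}, and exploit the decompositions \eqref{decomp of 2^n1} and \eqref{decomp-1}---points in the same direction as the paper's proof, but the two places where the actual work happens are not carried out correctly. First, the microlocal bookkeeping is off: the ``standard formula for semismall maps'' you invoke, $\on{CC}(\upsilon_{0*}\bC[-])=\sum_k m_k\,[\overline{T^{*}_{\cO_{2^k1^{2n-2k+1}}}\fg_1}]$ with $m_k$ the number of top-dimensional fiber components, is not a valid formula (each IC summand contributes conormals of the \emph{smaller} strata with a priori nonzero microlocal multiplicities, and these are exactly the unknowns here). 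Your check at the open stratum is also internally inconsistent: the multiplicity of $[\overline{T^{*}_{\cO_{2^i1^{2n-2i+1}}}\fg_1}]$ on the left-hand side is the generic rank, which is $1$, not $2$, and the identity there is simply $1=1+0$---the trivial part of the statement. All of the content lies along the intermediate conormals, which you explicitly defer.

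Second, the deferred step is not reduced to anything known. Characteristic cycles of IC sheaves on nilpotent orbit closures in $\mathfrak{sp}(2n)$ are not ``known (Lusztig)'', and the analogous identity there is not classical; it must be proved. Moreover, your fallback Euler-characteristic argument, as stated, only compares $\chi$ of the fibers of $\upsilon_0$ and $\upsilon$; via \eqref{decomp of 2^n1} and \eqref{decomp-1} this yields identities for the \emph{sums} $\sum_{i}\chi(\IC(\cO_{2^i1^{2n-2i+1}},\bC))_{x_j}$ and $\sum_{i}\chi(\IC(\cO_{2^{2i}1^{2n+1-4i}},\cE_{2i}))_{x_j}$, aggregated over all orbits of order two, whereas the proposition is an orbit-by-orbit statement in $i$. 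What is missing is the computation of the individual stalk Euler characteristics. The paper does this by first reducing, via the Kashiwara--Schapira fact that $\on{CC}$ is determined by the local Euler characteristic and via Theorems \ref{SO=SP} and \ref{SO=SP non-trivial local}, to the identity in $\mathfrak{sp}(2n)$ (Proposition \ref{CC for SP}), then reducing stalks at $x_j'$ to stalks at $0$ (\eqref{eu0}, Proposition \ref{reduction to zero}) and computing \eqref{eu1} and \eqref{eu2} through the classical Springer correspondence---Lusztig's explicit correspondence \eqref{spc-1}, \eqref{spc-2}, the induction theorem \eqref{eqn-induced rep}, restriction from $W_n$ to $S_n$ and Kostka numbers---together with the vanishing \eqref{vanishing of Euler characteristic} at odd $j$, which is exactly where \eqref{decomp-1}, the value $\chi(H^*(\upsilon^{-1}(x_j)))=(n-1)2^{n-j}$ and Corollary \ref{coro odd dimen vanishing} enter. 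Without these individual computations your argument does not close.
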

In Theorem \ref{non-trivial even} we have shown that for {\em even} $i$, the generic rank of $\fF(\IC(\cO_{2^i1^{2n-2i+1}},\mE_i))$ is $\dim\mF_i$. This together with the equality above imply
 Theorem \ref{matching}.
Indeed, using Propositions \ref{trivial odd} and \ref{CC} we see that
\beqn
\on{CC}(\mathfrak F(\IC(\cO_{2^i1^{2n-2i+1}},\bC)))=\on{CC}(\fF(\IC(\cO_{2^i1^{2n-2i+1}},\mE_i)))+\on{CC}(\fF(\IC(\cO_{2^{i-1}1^{2n-2i+3}},\bC)))
\eeqn
\beqn
=\on{CC}(\IC(\fg_1,\mF_i))+\on{CC}(\IC(\fg_1,\mL_{i-1}))
=r\cdot (T^*_{\fg_1}\fg_1)+\cdot\cdot\cdot
\eeqn
where \[r=\dim\mF_i+\dim\mL_{i-1}=
\binom{2n}{ i}-\binom{2n}{i-2}+\binom{2n+1}{ i-1}=\binom{2n+1}{ i}
=\dim\mL_i.\]
Hence the generic rank of $\mathfrak F(\IC(\cO_{2^i1^{2n-2i+1}},\bC))$ is equal to $\dim\mL_i$, 
and the theorem follows.

It remains to prove Proposition \ref{CC}. The proof is given in the next subsection.

\subsection{Proof of Proposition \ref{CC}}\label{sec-proof of proposition cc}

Recall that for any variety $X$ and $\mF\in D(X)$ we can consider the corresponding 
local Euler characteristic function 
\[\chi(\mF):X\ra \bZ\]
defined by 
\[\chi(\mF)_x=\sum(-1)^i\dim(\mH^i_x\mF).\]
We have the following fact
\begin{fact}[\cite{KaS},\ Theorem 9.7.11]
Let $\mF_1,\mF_2\in D(X)$. Then $\on{CC}(\mF_1)=\on{CC}(\mF_2)$
if and only if $\chi(\mF_1)=\chi(\mF_2)$.
\end{fact}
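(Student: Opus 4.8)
The plan is to deduce the Fact from the microlocal index formalism of \cite{KaS}, after first repackaging both invariants as group homomorphisms out of a single Grothendieck group. Since we work over $\bC$ in the classical topology, $\mF_1$ and $\mF_2$ are both constructible with respect to some common finite Whitney stratification $\mathcal S$ of $X$, which moreover may be chosen so that $\on{SS}(\mF_i)\subset\bigcup_{S\in\mathcal S}\overline{T^*_S X}$. I would then observe that both $\chi$ and $\on{CC}$ are additive in distinguished triangles — for $\chi$ this is the long exact cohomology sequence of the stalks, and for $\on{CC}$ it is a standard property \cite{KaS} — so both descend to homomorphisms $\bar\chi\colon K\to\on{CF}_{\mathcal S}(X)$ and $\overline{\on{CC}}\colon K\to L_{\mathcal S}(X)$, where $K:=K_0(D^b_{\mathcal S}(X))$, $\on{CF}_{\mathcal S}(X)$ is the free abelian group on the indicator functions $\mathbf 1_{\overline S}$ ($S\in\mathcal S$), and $L_{\mathcal S}(X)$ is the free abelian group on the conic Lagrangian cycles $[\overline{T^*_S X}]$ ($S\in\mathcal S$). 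With this reduction the Fact becomes the assertion that $\bar\chi$ and $\overline{\on{CC}}$ have the same kernel.

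To prove that, I would invoke the two halves of the microlocal index theorem of \cite[Ch.~9]{KaS}. First, the multiplicity of $\on{CC}(\mF)$ along each $\overline{T^*_S X}$ at a generic covector is computed, by microlocal Morse theory, as an Euler characteristic of a vanishing-cycle localization of $\mF$, and this number depends only on the constructible function $\chi(\mF)$; concretely $\on{CC}$ factors as $\on{CC}(\mF)=\on{Ch}(\chi(\mF))$ for a homomorphism $\on{Ch}\colon\on{CF}_{\mathcal S}(X)\to L_{\mathcal S}(X)$, whence $\ker\bar\chi\subseteq\ker\overline{\on{CC}}$. Second, Kashiwara's local index formula recovers the stalk Euler characteristics from the characteristic cycle, $\chi(\mF)=\on{eu}(\on{CC}(\mF))$, where $\on{eu}$ sends a Lagrangian cycle to the corresponding combination of local Euler obstructions of its supports, whence $\ker\overline{\on{CC}}\subseteq\ker\bar\chi$. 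Combining the two inclusions gives $\ker\bar\chi=\ker\overline{\on{CC}}$, and therefore
\[
\on{CC}(\mF_1)=\on{CC}(\mF_2)\ \Longleftrightarrow\ [\mF_1]=[\mF_2]\ \text{in}\ K\ \Longleftrightarrow\ \chi(\mF_1)=\chi(\mF_2),
\]
which is the Fact. Equivalently, one can phrase this as: $\bar\chi$ and $\overline{\on{CC}}$ are isomorphisms onto $\on{CF}_{\mathcal S}(X)$ and $L_{\mathcal S}(X)$ respectively, and the triangle $\overline{\on{CC}}=\on{Ch}\circ\bar\chi$ commutes with $\on{Ch}$ an isomorphism.

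Finally, as for what is routine and what is not: the passage to $K$, the two splittings of $\on{CF}_{\mathcal S}(X)$ and $L_{\mathcal S}(X)$, the unitriangularity (for the closure order on $\mathcal S$) of the matrices expressing $\chi(\bC_{\overline S})$ and $\on{CC}(\bC_{\overline S})$ in these bases — hence that $\bar\chi$ and $\on{Ch}$ are isomorphisms, together with the (standard, citeable) fact that a local system contributes to $K$, to $\chi$ and to $\on{CC}$ only through its rank — are all formal bookkeeping. The genuine content, and the step I expect to be the main obstacle to write out honestly, is the microlocal index theorem itself, i.e.\ the two compatibility formulas between $\on{CC}(\mF)$ and $\chi(\mF)$; its proof rests on the normal/microlocal structure of constructible sheaves (vanishing cycles, stratified Morse theory, and the computation of $\on{CC}$ of a constant sheaf on a smooth submanifold as the signed conormal with multiplicity $\pm1$). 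For that I would simply cite \cite[esp.~\S9.5 and \S9.7]{KaS} rather than reproduce it.
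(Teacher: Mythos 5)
The paper gives no proof of this statement: it is quoted as a Fact with a reference to \cite[Theorem~9.7.11]{KaS}, which is also where all the real content of your argument sits, so in substance you and the paper are relying on the same source. Your unwinding of it is the standard one and is correct in outline: additivity of $\chi$ and $\on{CC}$ on distinguished triangles, the factorization $\on{CC}(\mF)=\on{Ch}(\chi(\mF))$ through constructible functions, and Kashiwara's local index formula $\chi(\mF)=\on{eu}(\on{CC}(\mF))$ together give both implications at once, each direction following from one of the two factorizations.

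One statement in your write-up is wrong, though it is not load-bearing: the displayed chain $\on{CC}(\mF_1)=\on{CC}(\mF_2)\Leftrightarrow[\mF_1]=[\mF_2]$ in $K\Leftrightarrow\chi(\mF_1)=\chi(\mF_2)$ fails at the middle term, and the related assertions that $\bar\chi$ is an isomorphism onto $\on{CF}_{\mathcal S}(X)$ and that a local system contributes to $K$ only through its rank are likewise false. In $K=K_0(D^b_{\mathcal S}(X))$ the class of a local system on a stratum remembers its Jordan--H\"older constituents (its semisimplification), not just its rank; for instance the trivial and a nontrivial rank-one local system on a stratum have equal $\chi$ and equal $\on{CC}$ but distinct classes in $K$. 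So $\bar\chi$ and $\overline{\on{CC}}$ are surjections with equal kernels, not isomorphisms, and the Fact should be deduced directly from $\ker\bar\chi=\ker\overline{\on{CC}}$ (which your two inclusions do establish), with the intermediate equality in $K$ deleted; the unitriangularity argument then shows $\on{Ch}$ is an isomorphism, which is exactly the content of the cited theorem.
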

By the fact above we are reduced to show the following:
\begin{prop}\label{CC for SP}
For even $i$, we have 
\[\chi(\IC(\mO_{2^i1^{2n-2i}},\bC))=\chi(\IC(\mO_{2^i1^{2n-2i}},\mE_i))+\chi(\IC(\mO_{2^{i-1}1^{2n-2i+2}},\bC)).\]
\end{prop}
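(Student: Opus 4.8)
The plan is to reduce Proposition~\ref{CC for SP} to a closed combinatorial identity among Betti numbers of isotropic Grassmannians and of quadrics, and to verify that identity directly, using only the decomposition~\eqref{decomp-1} and the semismall resolution $\upsilon_0=\pi_{2^n1}$ already obtained. Since both sides of the asserted equality are Euler-characteristic functions, and since every IC-sheaf in play has stalk cohomology concentrated in a single parity (the orbits $\cO_{2^k1^{2n-2k}}$ all have even dimension), the equality is equivalent, for each $j\le i$, to the equality of total stalk dimensions at a point $x_j\in\cO_{2^j1^{2n-2j}}$. For $j>i$ both sides vanish; for $j=i$ it reads $1=1+0$ once we match the common shift $-\dim\cO_{2^i1^{2n-2i}}$; so only $j<i$ is at issue. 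By the stalk isomorphism of \S\ref{sec-stalks} --- whose proof is independent of this section --- the stalk cohomology of each of the three IC-sheaves at $x_j$ coincides with that of the corresponding IC-sheaf on $\cN_1$ at a point of $\cO_{2^j1^{2n-2j+1}}$. Thus Proposition~\ref{CC for SP} is equivalent to the symmetric-space identity
\[
\chi(\IC(\cO_{2^i1^{2n-2i+1}},\bC))=\chi(\IC(\cO_{2^i1^{2n-2i+1}},\mE_i))+\chi(\IC(\cO_{2^{i-1}1^{2n-2i+3}},\bC)),\qquad i\ \text{even},
\]
which is what I will prove.

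Both families of stalks occurring here are determined by data already on the table. Taking stalks in~\eqref{decomp of 2^n1} and using~\eqref{eqn-stalk} identifies $\bigoplus_{l\ge j}\mathcal H^\bullet_{x_j}\IC(\cO_{2^l1^{2n-2l+1}},\bC)$ with $H^\bullet(\upsilon_0^{-1}(x_j))=H^\bullet(\on{OGr}(n-j,2n-2j+1))$; combined with a transverse-slice reduction --- the germ of $\bar\cO_{2^l1^{2n-2l+1}}$ along $\cO_{2^j1^{2n-2j+1}}$ is that of $\bar\cO_{2^{l-j}1^{2(n-l)+1}}$ in the rank-$(n-j)$ split pair, a Kraft--Procesi style row/column removal which on the symplectic side is classical and here follows via \S\ref{sec-stalks} or directly from Lemma~\ref{lemma-basis} --- this pins down each $\mathcal H^\bullet_{x_j}\IC(\cO_{2^l1^{2n-2l+1}},\bC)$ by induction on the rank. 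For the non-trivial local systems, subtract from the left side of~\eqref{decomp-1} the constant-coefficient summands; by~\eqref{eqn-cohomology2} the remainder at $x_j$ (for $j$ even) is a single shifted copy of $H^\bullet(\upsilon_0^{-1}(x_j))$, so we obtain the system
\[
\bigoplus_{i'\ge j/2}\ \bigoplus_{a=0}^{n-2i'}\mathcal H^\bullet_{x_j}\IC(\cO_{2^{2i'}1^{2n+1-4i'}},\mE_{2i'})[\,n-2i'-2a\,]\ \cong\ H^\bullet(\on{OGr}(n-j,2n-2j+1))\quad(\text{up to shift}).
\]
Being triangular in $i'$ --- with the same transverse-slice reduction collapsing each summand to a stalk at $0$ in a smaller pair --- this determines $\mathcal H^\bullet_{x_j}\IC(\cO_{2^{2i'}1^{2n+1-4i'}},\mE_{2i'})$ for all $i'$; this is exactly the bookkeeping already carried out in the proof of~\eqref{directsummand2}.

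With all stalks expressed through Poincar\'e polynomials of odd orthogonal Grassmannians (and, via~\eqref{eqn-cohomology2}, of quadrics), the target identity becomes a closed numerical statement: after passing to total dimensions it equates a combination of the numbers $\dim H^\bullet(\on{OGr}(m,2m+1))=2^m$ with Gaussian-binomial quantities $g_{k,m}(1)=\binom{m}{k}$ from the introduction. I would isolate this as a lemma and prove it by a short induction (or generating-function) argument. The main obstacle is precisely this final step --- arranging the shift and parity bookkeeping so that the two families of stalks line up against $H^\bullet(\upsilon^{-1}(x_j))$ as described by~\eqref{eqn-cohomology}--\eqref{eqn-cohomology2}, and then recognizing and verifying the resulting binomial identity; all the geometric inputs (semismallness of $\pi_{2^n1}$, the quadric-bundle structure of $\upsilon^{-1}(x_j)$, and the transverse-slice reductions) are already available.

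Alternatively, the whole argument can be run directly on the symplectic side, in parallel with \S\ref{matching for non trivial 2^n}: one introduces the $\mathfrak{sp}_{2n}$-analogues of $\upsilon$ and $\check\upsilon$ --- a resolution of $\bar\cO_{2^n}^{\mathfrak{sp}}$ given by the total space of $\on{Sym}^2\mathcal S$ over $\on{SpGr}(n,2n)$ (with $\mathcal S$ the tautological subbundle), together with the attached quadric bundle and its orthogonal companion --- uses that the Fourier transforms on $\mathfrak{sp}_{2n}$ of IC-sheaves supported on order-$2$ nilpotent orbits have full support (classical, respectively generalized, Springer theory for $\mathrm{Sp}_{2n}$), and extracts the symplectic analogue of~\eqref{decomp-1}; Proposition~\ref{CC for SP} then follows immediately. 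The stalk inputs needed for the multiplicities are the same ones, imported through \S\ref{sec-stalks}.
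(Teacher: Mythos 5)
Your opening reductions are fine (working with Euler characteristics pointwise, using the \S\ref{sec-stalks} comparison to pass between the $\mathfrak{sp}(2n)$ and $\mathfrak{so}(2n+1)$ settings, and the odd-degree vanishing of Corollary \ref{coro odd dimen vanishing}), but the core of your argument has a genuine gap: the bookkeeping you propose does not determine the individual stalk Euler characteristics. For the trivial coefficients, \eqref{decomp of 2^n1} together with \eqref{eqn-stalk} and the reduction of Proposition \ref{reduction to zero} gives, at each rank $n'$, a \emph{single} identity $\sum_{m}\mH^\bullet_0\IC(\cO_{2^m1^{2n'+1-2m}},\bC)\cong H^\bullet(\on{OGr}(n',2n'+1))$ (up to shift); since the summands occupy overlapping degree ranges, this one equation per rank has many solutions and does not ``pin down each stalk by induction on the rank.'' The same problem hits your treatment of the nontrivial local systems: after subtracting the trivial-coefficient part using \eqref{eqn-cohomology2} you again get one equation per point $x_j$ involving all the unknown stalks $\mH^\bullet_{x_j}\IC(\cO_{2^{2i'}1^{2n+1-4i'}},\cE_{2i'})$ simultaneously, and there is no ``triangularity in $i'$'': moreover the reduction-to-zero statement you invoke for these summands is not available (Proposition \ref{reduction to zero} is proved only for trivial coefficients, and \S\ref{sec-stalks} does not supply a slice argument for $\cE_{2i}$). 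What the proof of \eqref{directsummand2} extracts from these equations is only the extreme multiplicities $m^i_0=m^i_{n-2i}=1$, via support bounds — not the stalks. Consequently the ``closed binomial identity'' you plan to verify is never actually produced by your method; the binomial identity itself (once the values are known) is the trivial part.

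The paper closes exactly this gap by a different mechanism: it transfers everything to $\mathfrak{sp}(2n)$ via Theorems \ref{SO=SP} and \ref{SO=SP non-trivial local} and then computes the three families of Euler characteristics through the classical Springer correspondence for $Sp(2n)$ — the multiplicity formula $\chi(\IC(\cO',\cL))_{x'}=[V_{(\cO',\cL)}:H^*(\cB_{x'})]$ from \eqref{sp resolution}, Lusztig's explicit determination \eqref{spc-1}--\eqref{spc-2}, the induction theorem \eqref{eqn-induced rep} with restriction from $W_n$ to $S_n$ and Kostka numbers (giving \eqref{eu1} and \eqref{eu2}), and a positivity-plus-total-sum argument with \eqref{decomp-1} and $\sum_i\binom{n-j}{[\frac{i-j}{2}]}=2^{n-j}$ for the odd-$j$ vanishing \eqref{vanishing of Euler characteristic}. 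If you want to avoid Springer theory for the trivial-coefficient stalks, you need the whole family of resolutions $\sigma_i$, one for each orbit as in \S\ref{resolution} and \S\ref{sec-Fano}, not just $\sigma_n=\pi_{2^n1}$ and $\upsilon$ — and even there the closed form requires nontrivial input. Your alternative sketch (redoing \S\ref{matching for non trivial 2^n} on the symplectic side) is likewise not a proof as stated: the symplectic analogue of \eqref{decomp-1} is not constructed, the full-support claim must be routed through the (generalized) Springer correspondence with attention to which pairs actually occur (for odd $i$ the pair $(\cO'_{2^i1^{2n-2i}},\cE'_i)$ does not appear in $\varphi_*\bC$), and nothing ``follows immediately.''
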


In \S\ref{sec-stalks} Theorem \ref{SO=SP} and Theorem \ref{SO=SP non-trivial local}, we show that
\beq\label{eqn-stalk trivial}
\chi(\IC(\cO_{2^i1^{2n-2i+1}},\bC))=\chi(\IC(\mO'_{2^i1^{2n-2i}},\bC))\text{ for all }i
\eeq  
\beq\label{eqn-stalks nontrivial}
\chi(\IC(\cO_{2^i1^{2n-2i+1}},\cE_{i}))=\chi(\IC(\mO'_{2^i1^{2n-2i}},\cE_i'))\text{ for even }i,
\eeq where $\mO'_{2^i1^{2n-2i}}$ is the nilpotent $Sp(2n)$-orbit in $\mathfrak{sp}(2n)$ corresponding to the partition $2^i1^{2n-2i}$ and $\cE_i'$ is the unique non-trivial irreducible $Sp(2n)$-equivariant local system on $\mO'_{2^i1^{2n-2i}}$. 

As such, we prove the proposition above making use of the classical Springer correspondence 
for symplectic group $Sp(2n)$. As discussed earlier, this will complete the proof of Theorem \ref{matching}.

\begin{proof}[Proof of Proposition \ref{CC for SP}]
The proof is reduced to proving the following: 
for even $i$, $j\leq i$, and $x_j'\in\cO'_{2^j1^{2n-2j}}$ we have
 \beq\label{goal}
 \chi(\IC({\cO_{2^i1^{2n-2i}}'},\bC))_{x_j'}=
 \chi(\IC({\cO_{2^i1^{2n-2i}}'},\cE'_i))_{x_j'}+\chi(\IC({\cO_{2^{i-1}1^{2n-2i+2}}'},\bC))_{x_j'}.
 \eeq
We will show that
\begin{eqnarray}
&&\chi(\IC({\cO'_{2^i1^{2n-2i}}},\bC))_{x_j'}=\chi(\IC({\cO'_{2^{i-j}1^{2n-2i}}},\bC))_{0}
\label{eu0}
\\
&&
\chi(\IC({\cO'_{2^{i-j}1^{2n-2i}}},\bC))_0=
\binom{n-j}{ [\frac{i-j}{2}]}\label{eu1}\\&&\chi(\IC({\cO'_{2^{2i}1^{2n-4i}}},\cE_{2i}))_{x_j'}=\binom{n-j}{ i-\frac{j}{2}}-\binom{n-j}{ i-\frac{j}{2}-1}\text{ if }j\text{ is even},\label{eu2}\\
&&\chi(\IC({\cO'_{2^{2i}1^{2n-4i}}},\cE_{2i}'))_{x_j'}=0\text{ if }j\text{ is odd}.\label{vanishing of Euler characteristic}
\end{eqnarray}
The equality (\ref{goal}) follows.
\end{proof}

\begin{proof}[Proof of \eqref{eu0}] The equality \eqref{eu0} follows from \eqref{eqn-stalk trivial} and Proposition \ref{reduction to zero} (see \S\ref{sec-stalks}), which states that $\mH^k_x\IC(\cO_{2^i1^{2n-2i+1}},\bC)=\mH^{k+s_j}_0\IC(\cO_{2^{i-j}1^{2n-2i+1}},\bC)$, $s_j=j(2n+1-j)$.
Indeed, since $s_j$ is even, we have
\beqn
\chi(\IC({\cO'_{2^i1^{2n-2i}}},\bC))_{x_j'}=\sum(-1)^k
\dim\mH^k_{x_j'}\IC(\mO'_{2^i1^{2n-2i}},\bC)
\eeqn
\beqn=
\sum
(-1)^{k+s_j}\dim\mH^{k+s_j}_0\IC(\mO'_{2^{i-j}1^{2n-2i}},\bC)=\chi(\IC({\cO'_{2^{i-j}1^{2n-2i}}},\bC))_{0}.
\eeqn
\end{proof}

\begin{proof}[Proof of \eqref{eu1} and \eqref{eu2}]
We work in the classical Springer correspondence setting for $\mathfrak{sp}(2n)$.  
Let $\cN=\cN_{\mathfrak{sp}(2n)}$ and $G=Sp(2n)$ in this proof. Recall that we have the Springer resolution $\varphi:\widetilde{\cN}\to\cN$ and
 \beq\label{sp resolution}
 \varphi_*\bC[-]\cong\bigoplus_{(\mO,\cE)}\IC({\mO},\cE)\otimes V_{\mO,\cE}.
 \eeq
Here $(\mO,\cE)$ runs through the pairs consisting of a nilpotent $G$-orbit $\mO\subset \mN$ and an irreducible $G$-equivarant local system on $\mO$, that appear in the Springer correspondence. Moreover $V_{\mO,\cE}$ denotes the irreducible Weyl group representation corresponding to the pair $(\mO,\cE)$ under Springer correspondence. It follows that
\beq\label{multip}
\chi(\IC({\mO},\cE))_x=[V_{\mO,\cE}:H^*(\mB_x,\bC)]
\eeq
for $x\in\bar{\mO}$. Here $\mB_x=\varphi^{-1}(x)$ is the Springer fiber and $[V_{\mO,\cE}:H^*(\mB_x,\bC)]$  denotes the multiplicity of $V_{\mO,\cE}$ in the Weyl group representation $H^*(\mB_x,\bC)$.

Let us denote the Weyl group of type $C_n$ by $W_n$. It is well known that the irreducible representations of $W_n$ are parametrized by pairs of partitions $(\alpha)(\beta)$ such that $|\alpha|+|\beta|=n$. Lusztig in \cite{Lu} has computed the (generalized) Springer correspondence explicitly. In our case, we have
\begin{eqnarray}
&& V_{(\cO_{2^{2i}1^{2n-4i}}',\bC)}=(1^i)(1^{n-i}),\ V_{(\cO_{2^{2i-1}1^{2n-4i+2}}',\bC)}=(1^{n-i+1})(1^{i-1}),\label{spc-1}\\  
&&V_{(\cO_{2^{2i}1^{2n-4i}}',\cE_{2i}')}= (0)(2^i1^{n-2i})\label{spc-2},
\end{eqnarray}
and $\IC({\cO'_{2^{2i-1}1^{2n-4i+2}}},\cE_{2i-1}')$ does not appear in the decomposition of $\varphi_*\bC[-]$.
It follows from \eqref{multip} and \eqref{spc-1} that
\beqn
\chi(\IC({\cO_{2^i1^{2n-2i}}'},\bC))_0=[V_{(\cO_{2^i1^{2n-2i}}',\bC)}:H^*(\cB,\bC)]=\dim V_{({\cO_{2^i1^{2n-2i}}'},\bC)}=\binom{n}{[\frac{i}{2}]}.
\eeqn
This proves \eqref{eu1}.

We prove \eqref{eu2}.
Recall that  $H^{\text{odd}}(\cB_x,\bC)=0$ (see \cite{CLP}) and if $x\in \Lie\,L$ is regular nilpotent, we have (see \cite{Lu2})
\beq\label{eqn-induced rep}
\sum H^{2k}(\cB_x,\bC)=\Ind_{W_L}^{W_n}\bC,
\eeq
where $L$ is a Levi subgroup of $G$ and $W_L$ is the Weyl group of $L$.

Consider our elements $x_j'\in\cO'_{2^j1^{2n-2j}}$, where $j\leq 2i$. Assume that $j=2j_0$. We can find a Levi subgroup $L\subset Sp(2n)$ such that  $L\cong {GL(2)\times\cdots\times GL(2)}\ ({j_0}\text{ copies})\subset L_0=GL(n)$, and $x_j'\in\Lie\, L$ is regular, where $L_0$ is a Levi subgroup of a maximal parabolic subgroup. We have
$
W_L\cong {S_2\times \cdots\times S_2}\ ({j_0}\text{ copies})\subset S_n.
$
Thus
\begin{eqnarray*}
&\chi(\IC(\cO'_{2^{2i}1^{2n-4i}},\cE'_{2i}))_{x_j'}\stackrel{\eqref{multip}}=[V_{(\cO'_{2^{2i}1^{2n-4i}},\cE'_{2i})},H^*(\cB_{x_j'},\bC)]\stackrel{\eqref{spc-2}\  \eqref{eqn-induced rep}}=[(0)(2^i1^{n-2i}),\Ind _{W_L}^{W_n}\bC]\\
&=[(0)(2^i1^{n-2i}),\Ind _{S_n}^{W_n}\Ind_{W_L}^{S_n}\bC]=[\operatorname{Res}^{W_n}_{S_n}(0)(2^i1^{n-2i}),\Ind_{W_L}^{S_n}\bC]_{S_n}\\
&=[(2^i1^{n-2i}),\Ind_{W_L}^{S_n}\bC]_{S_n}=[2^i1^{n-2i},H^*(\cB_{2^{j_0}1^{n-2j_0}}^{GL(n)},\bC)]_{S_n}=\chi(\IC(\cO_{2^i1^{n-2i}},\bC))_{2^{j_0}1^{n-2j_0}}^{GL(n)}.
\end{eqnarray*}
Here in the last two equalities, $\cB_{2^{j_0}1^{n-2j_0}}^{GL(n)}$ denotes the Springer fiber of an element in the nilpotent orbit corresponding to the partition $2^{j_0}1^{n-2j_0}$ in $\mathfrak{gl}(n)$, and $\chi(\IC(\cO_{2^i1^{n-2i}},\bC))_{2^{j_0}1^{n-2j_0}}^{GL(n)}$ is defined analogously in $GL(n)$.
Now it follows from the classical result for $GL(n)$ that 
\beqn\label{multiplicity3}
\chi(\IC(\cO_{2^i1^{n-2i}},\bC))_{2^{j_0}1^{n-2j_0}}^{GL(n)}=K_{2^i1^{n-2i},2^{j_0}1^{n-2j_0}}=K_{2^{i-j_0}1^{n-2i},1^{n-2j_0}}=\binom{n-2j_0}{ i-j_0}-\binom{n-2j_0}{ i-j_0-1},
\eeqn
where $K_{\lambda,\mu}$ denotes the Kostka number. This proves \eqref{eu2}.
\end{proof}

\begin{proof}[Proof of \eqref{vanishing of Euler characteristic}] Let $x_j\in\cO_{2^j1^{2n-2j+1}}$, where $j$ is {\em odd}. In view of \eqref{eqn-stalks nontrivial}, it suffices to show that
$\chi(\IC({\cO_{2^{2i}1^{2n-4i+1}}},\cE_{2i}))_{x_j}=0.$ We prove this using the decomposition in \eqref{decomp-1} (see \S\ref{matching for 
non trivial 2^n})
\begin{eqnarray*}
&&\upsilon_*\bC[n^2+2n-2]\cong \bigoplus_{a=0}^{n-2}\bigoplus_{i=0}^n\IC({\cO_{2^{i}1^{2n-2i+1}}},\bC)[n-2-2a]\\
&&\oplus\bigoplus_{i=1}^{[\frac{n}{2}]}\bigoplus_{a_i=0}^{n-2i}\IC({\cO_{2^{2i}1^{2n+1-4i}}},\cE_{2i})[n-2i-2a_i]\oplus\bigoplus_{a=0}^n\IC(\cO_{1^{2n+1}},\bC)[n-2a].\nonumber
\end{eqnarray*}
Note that $\mH^{\text{odd}}_{x_j}\IC(\cO_{2^i1^{2n+1-2i}},\bC)=\mH^{\text{odd}}_{x_j}\IC(\cO_{2^{2i}1^{2n+1-4i}},\cE_{2i})=0$ (see Corollary \ref{coro odd dimen vanishing}). Thus $\chi(\IC(\cO_{2^{2i}1^{2n+1-4i}},\cE_{2i}))_{x_j}\geq 0$. It follows from the decomposition above that 
\beqn
\chi(H^*(\upsilon^{-1}(x_j),\bC)=(n-1)\sum_{i=j}^n\chi(\IC(\cO_{2^i1^{2n+1-2i}},\bC))_{x_j}+\sum_{j\leq2i\leq n}\chi(\IC(\cO_{2^{2i}1^{2n+1-4i}},\cE_{2i}))_{x_j}.
\eeqn
We have $\chi(H^*(\upsilon^{-1}(x_j),\bC)=(n-1)2^{n-j}$ (here $\upsilon^{-1}(x_j)$ is a quadric bundle over $\on{OGr}(n-j,2n-2j+1)$ with fibers having euler characteristic $n-1$, see \S\ref{matching for 
non trivial 2^n}).
Thus equations  \eqref{eqn-stalk trivial}  and \eqref{eu1} imply that
\beqn
\sum_{i=j}^n\chi(\IC(\cO_{2^i1^{2n+1-2i}},\bC))_{x_j}=\sum_{i=j}^n\binom{n-j}{ [\frac{i-j}{2}]}=2^{n-j}.
\eeqn
It follows that
$\chi(\IC(\cO_{2^{2i}1^{2n+1-4i}},\cE_{2i}))_{x_j}=0$.
\end{proof}

The proof of Proposition \ref{CC} is complete. This completes the proof of Theorem \ref{matching}.

\section{Stalks of the IC sheaves}\label{sec-stalks}
In this section we establish an interesting isomorphism between 
stalks of the IC sheaves  supported on nilpotent orbits of order two  and 
stalks of the IC sheaves supported on 
 nilpotent orbits of order two in $\mathfrak{sp}(2n)$.   More precisely, let $\cN_{\mathfrak{sp}(2n)}$ denote the nilpotent cone of $\mathfrak{sp}(2n)$. Let $\cO_{2^i1^{2n-2i}}'\subset \cN_{\mathfrak{sp}(2n)}$ denote the nilpotent orbit corresponding to the partition $2^i1^{2n-2i}$.
We have $\dim
\mO'_{2^i1^{2n-2i}}=\dim\mO_{2^i1^{2n-2i+1}}=i(2n+1-i)$,
and for each $i\geq 1$, there exists a unique nontrivial irreducible $Sp(2n)$-equivariant 
local system $\mE_i'$ on $\mO'_{2^i1^{2n-2i}}$.

We have the 
following theorems, which were used in the proof of Theorem \ref{matching} in \S\ref{sec orbits of order 2}. We remark that the proofs of these theorems are independent of \S\ref{sec orbits of order 2}.
\begin{thm}\label{SO=SP}
For 
$x\in\mO_{2^j1^{2n-2j+1}}$ and $x'\in\mO'_{2^j1^{2n-2j}}$,
we have \[\mathcal H_x^k\IC(\cO_{2^i1^{2n-2i+1}},\bC)\is
\mathcal H_{x'}^k\IC(\cO'_{2^i1^{2n-2i}},\bC).\] 
\end{thm}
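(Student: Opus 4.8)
The plan is to compare the two IC-sheaves by producing a single geometric family that simultaneously resolves (semismall or small maps onto) the two orbit closures and whose fibres match. The natural candidate is the map $\pi_{2^n1}$ (and its $\mathfrak{sp}$-analogue) used in \S\ref{order two}, together with the resolutions $\tau_m$ and $\upsilon_0$ introduced there. First I would recall that for $x\in\cO_{2^j1^{2n-2j+1}}$ the fibre $\pi_{2^n1}^{-1}(x)$ is, by the computation in the proof of the lemma preceding \eqref{decomp of 2^n1}, an orthogonal Grassmannian $\on{OGr}(n-j,2n-2j+1)$; on the symplectic side the analogous resolution of $\bar{\cO}'_{2^n}\subset\mathfrak{sp}(2n)$ has fibre over $x'\in\cO'_{2^j1^{2n-2j}}$ equal to a symplectic Grassmannian $\on{SpGr}(n-j,2n-2j)$. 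The key elementary fact — presumably supplied by the combinatorial argument attributed to Dennis Stanton — is that $\on{OGr}(k,2k+1)$ and $\on{SpGr}(k,2k)$ have isomorphic cohomology (indeed both have Poincaré polynomial $\prod_{l=1}^{k}(1+q^{l})$ up to the appropriate normalization). So the stalks of the two pushforwards agree after the degree shift by $\dim$ of the total space.

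The main step is then to run the decomposition theorem on both sides and match summands by descending induction on $j$ (equivalently, on the partial order of orbits). On the split side $\pi_{2^n1*}\bC[-]\cong\bigoplus_{k=0}^n\IC(\cO_{2^k1^{2n-2k+1}},\bC)$ by \eqref{decomp of 2^n1}; on the symplectic side the analogous semismall resolution gives $\bigoplus_{k=0}^n\IC(\cO'_{2^k1^{2n-2k}},\bC)$ with the same index set and the same fibre dimensions. Taking stalks at $x$ (resp. $x'$) of these identities, one gets for each $j$ a triangular system expressing $H^*(\on{OGr}(n-j,2n-2j+1))$, resp. $H^*(\on{SpGr}(n-j,2n-2j))$, as $\bigoplus_{k\le j}\mathcal H^{*}_{x}\IC(\cO_{2^k1^{2n-2k+1}},\bC)$ with multiplicities equal to those of the Grassmannian factors appearing in the fibres over smaller orbits. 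Since the multiplicity bookkeeping (which Grassmannian of which dimension appears in $\pi_{2^n1}^{-1}(x)$ stratified by the orbit of $x$) is governed purely by the combinatorics of the partitions $2^k1^{\bullet}$ and is identical in the two settings, and since the Grassmannian cohomologies match term by term, one solves the triangular system inductively and concludes $\mathcal H^k_x\IC(\cO_{2^i1^{2n-2i+1}},\bC)\cong\mathcal H^k_{x'}\IC(\cO'_{2^i1^{2n-2i}},\bC)$ for all $i,j,k$.

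I would organize the induction starting from $i=j$ (the open stratum, where both IC stalks are just $\bC$ in degree $0$), then peel off one orbit at a time. At each stage the new unknown $\mathcal H^*_{x_j}\IC(\cO_{2^i1^{2n-2i+1}},\bC)$ is determined by subtracting from $H^*$ of the relevant orthogonal Grassmannian the already-known contributions of the orbits above $\cO_{2^i\cdots}$, and symmetrically on the symplectic side; the two answers coincide because every ingredient does. A subtlety to check is that the \emph{shifts} occurring in the decomposition theorem (the $[\pm k]$ terms, if any) are the same on both sides — but here both resolutions are genuinely semismall with irreducible fibres, so no shifts appear and this is automatic.

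\textbf{Expected main obstacle.} The hard part will not be the decomposition-theorem formalism but the purely combinatorial claim that the orthogonal and symplectic Grassmannians $\on{OGr}(n-j,2n-2j+1)$ and $\on{SpGr}(n-j,2n-2j)$ — and, more importantly, \emph{all} the Grassmannian strata appearing in the fibres over every smaller nilpotent orbit, with their multiplicities — contribute identically to the stalk computation. Making this precise requires a careful stratification of $\pi_{2^n1}^{-1}(x_j)$ (and its symplectic twin) by the $K$-orbit of the flag, identifying each stratum's cohomology, and then checking the matching stratum-by-stratum; this is exactly the kind of bijective/generating-function argument one expects Stanton's contribution to handle, and writing it out carefully is where the real work lies. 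An alternative that sidesteps the finest combinatorics would be to instead use the maps $\tau_m$ / $\upsilon$ of \S\ref{order two} and \S\ref{sec orbits of order 2}, whose fibres over the order-two orbits were already computed there, and to note that the \emph{same} list of Grassmannians $\on{OGr}(m-1-i,2m-1-2i)$ appears; but one still has to exhibit the symplectic counterpart and verify the cohomological identification, so the combinatorial core is unavoidable.
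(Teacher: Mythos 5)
Your proposal has a genuine gap at its central step. Using only the single semismall map $\pi_{2^n1}$ (i.e. $\sigma_n$ in the notation of \S\ref{sec-stalks}) and its symplectic analogue, the decomposition theorem gives, upon taking stalks at $x_j\in\cO_{2^j1^{2n-2j+1}}$ (resp.\ $x_j'\in\cO'_{2^j1^{2n-2j}}$), exactly one identity per stratum:
\begin{equation*}
H^{*}\bigl(\on{OGr}(n-j,2n-2j+1),\bC\bigr)\;\cong\;\bigoplus_{i\geq j}\mathcal H^{*}_{x_j}\IC(\cO_{2^i1^{2n-2i+1}},\bC)\quad(\text{up to shift}),
\end{equation*}
and likewise with $\on{SpGr}(n-j,2n-2j)$ on the symplectic side. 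Since the Grassmannian cohomologies agree, this only tells you that the \emph{sums} over all $i\geq j$ coincide; it does not isolate the individual summands. The system is not triangular in any usable sense: for fixed $j$ all the unknowns $\mathcal H^{*}_{x_j}\IC(\cO_{2^i\cdots},\bC)$ with $i>j$ occur in the same single equation, their stalk degrees overlap (each is spread over $[-\dim\cO_{2^i\cdots},\,-\dim\cO_{2^j\cdots})$), so they cannot be peeled off one at a time, and the equations for different $j$ involve stalks at different points, so they do not interact. Knowing the base case $i=j$ does not start an induction, because after subtracting it you are left with one equation in $n-j$ graded unknowns. Your remark that ``no shifts appear'' is correct precisely for $i=n$, which is the underdetermined case; it is not a feature that rescues the argument.

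The paper's proof fixes this by using the \emph{whole family} of resolutions $\sigma_i$ and $\tau_i$, one for each orbit closure $\bar\cO_{2^i1^{2n-2i+1}}$ and $\bar\cO'_{2^i1^{2n-2i}}$ (\S\ref{resolution}). For $i<n$ these maps are not semismall, so the pushforwards \eqref{decomposition sigmai*C}, \eqref{decomposition tau} contain shifted summands with a priori unknown multiplicities $t^i_{jk}$, $(t^i_{jk})'$; the real work is the double induction (on $i$, then descending on $j$) of Lemma~\ref{stalks are equal} and Lemma~\ref{decomposition n}, which uses the degree separation between $\IC(\cO_{2^i\cdots},\bC)|_{x_j}$ (degrees $<-\dim\cO_{2^j\cdots}$) and the bottom summands $\IC(\cO_{2^j\cdots},\bC^{t^i_{jk}})[\pm k]|_{x_j}$ (degrees $\geq-\dim\cO_{2^j\cdots}$) to determine the multiplicities and the stalks simultaneously on both sides. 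The ingredients you do identify correctly — the isomorphism $H^*(\on{OGr}(i-j,2n-2j+1))\cong H^*(\on{SpGr}(i-j,2n-2j))$ and the triviality of the component-group actions — are exactly \eqref{property1} and \eqref{property2} in the paper, but they are necessary rather than sufficient. (Also, your fallback of invoking the maps $\upsilon$, $\tau_m$ of \S\ref{sec-order2}--\S\ref{sec orbits of order 2} would risk circularity: the decompositions there rely on the stalk comparisons of \S\ref{sec-stalks}, which is why the paper keeps this section independent; and Stanton's combinatorial identity enters only in \S\ref{sec-Fano}, not here.)
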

\begin{thm}\label{SO=SP non-trivial local}
Assume that $i$ is even.
For
$x\in\mO_{2^j1^{2n-2j+1}}$ and $x'\in\mO'_{2^j1^{2n-2j}}$,
we have 
\[\mathcal H_x^k\IC(\cO_{2^i1^{2n-2i+1}},\mE_i)\is
\mathcal H_{x'}^k\IC(\cO'_{2^i1^{2n-2i}},\mE'_i).\]
\end{thm}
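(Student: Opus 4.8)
The plan is to reduce the comparison of stalks to an explicit combinatorial statement about the relevant Springer fibers on both sides, and to do this uniformly for the whole family of orbits of order two at once. First I would set up, on the symmetric space side, the resolution $\upsilon_0 = \pi_{2^n1} \colon \widetilde{\cO}_{2^n1} \to \bar\cO_{2^n1}$ from \eqref{Redder's map} together with the auxiliary resolution $\upsilon$ from \S\ref{matching for non trivial 2^n}, and recall the decomposition \eqref{decomp-1}, which expresses $\upsilon_*\bC[-]$ as a sum of the $\IC(\cO_{2^i1^{2n-2i+1}},\bC)$'s and the $\IC(\cO_{2^{2i}1^{2n+1-4i}},\cE_{2i})$'s, each with explicit multiplicities and shifts (a Lefschetz string of length $n-2i+1$ for the $\cE_{2i}$-summand). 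Taking stalks at a point $x_j \in \cO_{2^j1^{2n-2j+1}}$ and using that $\upsilon^{-1}(x_j)$ is a quadric bundle over $\on{OGr}(n-j,2n-2j+1)$ with fibre the quadric $\sum_{s=1}^j a_s^2 = 0$ in $\bP^{n-1}$, one gets, via \eqref{eqn-cohomology2}, a formula for $\bigoplus_k \mathcal H^k_{x_j}\IC(\cO_{2^{2i}1^{2n+1-4i}},\cE_{2i})$ in terms of the primitive cohomology of that quadric times the cohomology of $\upsilon_0^{-1}(x_j) = \on{OGr}(n-j,2n-2j+1)$, after stripping off the contribution of the trivial-local-system summands (which are already controlled by Theorem \ref{SO=SP}).

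On the symplectic side I would run the parallel argument: use the resolution of $\bar\cO'_{2^n}$ (or the appropriate analogue of $\upsilon$) coming from the parabolic stabilising a Lagrangian flag $V_{n-1}\subset V_n$ in $\bC^{2n}$, whose fibres over $\cO'_{2^j1^{2n-2j}}$ are again a quadric bundle — this time with base $\on{SpGr}(n-j,2n-2j)$ and fibre a quadric of the same type $\sum_{s=1}^j a_s^2=0$. The classical Springer decomposition \eqref{sp resolution} together with Lusztig's explicit description \eqref{spc-1}–\eqref{spc-2} of which pairs $(\cO',\cL')$ occur, plus \eqref{multip}, gives $\bigoplus_k \mathcal H^k_{x'_j}\IC(\cO'_{2^{2i}1^{2n-4i}},\cE'_{2i})$ as a Weyl-group multiplicity, which unwinds (exactly as in the proof of \eqref{eu2}) to Kostka numbers for $GL$. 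The key comparison is then the matching of two sets of numbers: the graded dimensions $\dim \mathcal H^k_{x_j}\IC(\cO_{2^{2i}1^{2n+1-4i}},\cE_{2i})$ and $\dim \mathcal H^k_{x'_j}\IC(\cO'_{2^{2i}1^{2n-4i}},\cE'_{2i})$. Because both sides vanish in odd degrees (Corollary \ref{coro odd dimen vanishing} on the symmetric side; $H^{\mathrm{odd}}(\cB_x)=0$ by \cite{CLP} on the symplectic side), it is enough to match even-degree dimensions, and I would do this by downward induction on $i$ (as in \S\ref{matching for non trivial 2^n}): the top orbit $\cO_{2^n1}$ versus $\cO'_{2^n}$ is handled directly, and at each stage the already-proved Theorem \ref{SO=SP}, the decomposition \eqref{decomp-1} and its symplectic counterpart let one solve for the $\cE_{2i}$-stalk in terms of quantities already known to agree.

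The main obstacle I anticipate is the bookkeeping needed to isolate the $\cE_{2i}$-summand cleanly: in \eqref{decomp-1} the IC sheaves with nontrivial local systems appear with a whole Lefschetz string of shifts, and on the symplectic side the analogous resolution mixes the nontrivial-local-system IC sheaf with many trivial-local-system ones supported on smaller orbits, so the induction has to be arranged so that at step $i$ everything except $\IC(\cO'_{2^{2i}1^{2n-4i}},\cE'_{2i})$ has already been pinned down. A secondary subtlety is verifying that the quadric bundles on the two sides are genuinely ``the same'' fibrewise — i.e. that the base change from $\on{OGr}(n-j,2n-2j+1)$ to $\on{SpGr}(n-j,2n-2j)$ does not disturb the relevant cohomology — which follows because both have no odd cohomology and their even Betti numbers are given by the same $q$-binomial $g_{n-j,\,2n-2j}$ up to the standard shift, but should be checked explicitly. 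Once the parity vanishing and the inductive set-up are in place, the remaining identity reduces, via \eqref{spc-2}, \eqref{eqn-induced rep} and restriction to $S_n$, to the Kostka-number computation already carried out in the proof of \eqref{eu2}, and the isomorphism of stalks in each fixed degree $k$ follows by comparing dimensions together with the functoriality already used for the trivial-local-system case in Theorem \ref{SO=SP}.
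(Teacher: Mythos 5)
There is a genuine gap at the heart of your plan: on the symplectic side you propose to pin down the stalks via \eqref{multip}, \eqref{eqn-induced rep} and the Kostka-number computation of \eqref{eu2}, but those inputs only give the multiplicity $[V_{(\cO',\cL')}:H^*(\cB_{x'},\bC)]$, i.e.\ the local Euler characteristic $\sum_k(-1)^k\dim\mathcal H^k_{x'}\IC(\cO'_{2^{2i}1^{2n-4i}},\cE'_{2i})$ (equivalently, by parity vanishing, the \emph{total} dimension of the stalk). The theorem is a statement about each individual $\mathcal H^k$, and parity vanishing does not let you recover the graded dimensions from their sum; for that you would need the graded multiplicities of Springer representations in $H^*(\cB_{x'})$ (Green function / Kostka--Foulkes data), which is exactly the information the theorem encodes and which \eqref{eu2} does not supply. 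A second, related problem sits on the symmetric-space side: at a point $x_j$ the decomposition \eqref{decomp-1} mixes \emph{all} summands $\IC(\cO_{2^{2i'}1^{2n+1-4i'}},\cE_{2i'})$ with $2i'\ge j$, each smeared over a Lefschetz string of shifts and with no degree or support triangularity separating different $i'$, so "stripping off" the trivial-local-system part (known from Theorem \ref{SO=SP}) still leaves one equation per degree with several unknown stalks; your downward induction on $i$ does not resolve this. (For the record, there is no outright circularity in citing \eqref{decomp-1} and Corollary \ref{coro odd dimen vanishing}: these are established in \S\ref{matching for non trivial 2^n} without the stalk theorems, which is why the paper can later use Theorems \ref{SO=SP} and \ref{SO=SP non-trivial local} in \S\ref{sec orbits of order 2} — but the Euler-characteristic-versus-graded gap remains.)

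The paper's actual proof avoids computing either side. It introduces, for every even $i$, the two-step-flag maps $\widetilde\sigma_i$ and $\widetilde\tau_i$ of \S\ref{tilde tau, sigma}, whose fibers over $\cO_{2^j1^{2n-2j+1}}$ and $\cO'_{2^j1^{2n-2j}}$ are quadric bundles over $\on{OGr}(i-j,2n-2j+1)$ and $\on{SpGr}(i-j,2n-2j)$ with isomorphic cohomology \emph{and identical component-group actions} (the lemma on the swap automorphism of the quadric $\sum b_s^2=0$ is the key equivariant input, \eqref{property3}). It then runs a parallel double induction showing the multiplicities in \eqref{decomposition sigmatilde} and \eqref{decomposition tautilde} coincide, $m^i_{jk}=(m^i_{jk})'$ and $a^i_{jk}=(a^i_{jk})'$, using the degree separation between the stalk of the top IC sheaf and the summands supported on the orbit of $x_j$; the stalk isomorphisms then follow formally because both sides are determined by the same inductive data. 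If you want to salvage your route, you would have to replace the Kostka-number step by genuinely graded information on the symplectic side and exhibit a triangular scheme isolating each $\cE_{2i}$-summand of \eqref{decomp-1}; at that point you would essentially be reconstructing the paper's argument.
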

The following proposition was used in the proof of \eqref{eu0}.
\begin{prop}\label{reduction to zero}
Let $j\leq i$ and set \[s_j=\dim\mO_{2^i1^{2n-2i+1}}-\dim\mO_{2^{i-j}1^{2n-2i+1}}=j(2n+1-j).\] Then for 
$x\in\mO_{2^j1^{2n-2j+1}}$,
we have  
\[\mH^k_x\IC(\cO_{2^i1^{2n-2i+1}},\bC)=\mH^{k+s_j}_0\IC(\cO_{2^{i-j}1^{2n-2i+1}},\bC).\]
\end{prop}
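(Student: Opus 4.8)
The plan is to compute the stalk at $x\in\cO_{2^j1^{2n-2j+1}}$ by means of a Slodowy slice transverse to the $K$-orbit of $x$, and to identify that slice with $\bar\cO_{2^{i-j}1^{2n-2i+1}}$ for the smaller split symmetric pair $(SL(V'),SO(V',Q'))$, $\dim V'=2n+1-2j$. Fix a normal $\mathfrak{sl}_2$-triple $\{x,h,y\}$ (so $x,y\in\fg_1$, $h\in\fg_0$) and set $S_x=x+Z_{\fg_1}(y)$. The $\mathfrak{sl}_2$-theory gives the $\theta$-stable decomposition $\fg=[\fg,x]\oplus Z_\fg(y)$, whence $\fg_1=[\fg_0,x]\oplus Z_{\fg_1}(y)$, so $S_x$ is transverse at $x$ to $\cO_x=K\cdot x$ inside $\fg_1$; in fact $S_x$ meets every $K$-orbit transversally, hence it is transverse to the stratification of $\bar\cO_{2^i1^{2n-2i+1}}$ by $K$-orbits. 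Therefore, with $\Sigma=\bar\cO_{2^i1^{2n-2i+1}}\cap S_x$, transverse restriction to $S_x$ carries $\IC(\cO_{2^i1^{2n-2i+1}},\bC)$ to $\IC(\Sigma,\bC)$ up to the shift $\dim\cO_x$, whence
\[
\mH^k_x\IC(\cO_{2^i1^{2n-2i+1}},\bC)\;\cong\;\mH^{k+\dim\cO_x}_x\IC(\Sigma,\bC),
\]
and $\dim\cO_x=\dim\cO_{2^j1^{2n-2j+1}}=j(2n+1-j)=s_j$ by \eqref{dim-centralizer}.

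It remains to identify $\Sigma$. In the notation of Lemma \ref{lemma-basis}, choose a basis of $V$ adapted to $x$ and decompose $V=H\perp V'$, where $H=\bigoplus_{l=1}^j\langle v_l,xv_l\rangle$ is the span of the $j$ size-$2$ Jordan blocks of $x$ (a nondegenerate hyperbolic subspace of dimension $2j$) and $V'=H^\perp$ carries the induced nondegenerate form $Q'$; thus $x=(x|_H)\oplus 0$, and one may take the triple so that $y$ annihilates $V'$ and each $v_l$ with $y(xv_l)=v_l$. Writing out $Z_{\fg_1}(y)$ in this basis and imposing $(x+z)^2=0$ for $z\in Z_{\fg_1}(y)$, one sees that every component of $z$ mixing $H$ with $V'$, as well as the parameters of $z$ supported on $H$, must vanish, so that $x+z=(x|_H)\oplus\phi$ with $\phi$ a traceless endomorphism of $V'$ that is self-adjoint with respect to $Q'$ and satisfies $\phi^2=0$; moreover $\rk(x+z)=j+\rk\phi$. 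Since for order-$2$ partitions $\cO_\mu\subset\bar\cO_{\mu'}$ is equivalent to $\mu'\geq\mu$, we have $\bar\cO_{2^i1^{2n-2i+1}}=\{N\in\fg_1:N^2=0,\ \rk N\leq i\}$, and the map $\phi\mapsto x+z$ is an isomorphism
\[
\Sigma\;\xrightarrow{\ \sim\ }\;\{\phi\in\fg_1':\phi^2=0,\ \rk\phi\leq i-j\}=\bar\cO_{2^{i-j}1^{2n-2i+1}}
\]
for $(SL(V'),SO(V',Q'))$, where $\fg_1'$ is the $(-1)$-eigenspace and the isomorphism sends $x$ to $0$. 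Substituting this into the slice isomorphism gives the proposition.

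The point requiring care is the slice computation in the second paragraph, namely verifying that $(x+z)^2=0$ forces $x+z=(x|_H)\oplus\phi$; this is a block-by-block linear-algebra check on the equations $(x+z)^2v_l=0$ and $(x+z)^2w=0$ for $w\in V'$, made transparent by the adapted basis of Lemma \ref{lemma-basis}. One could equally organize the proof as an induction on $j$ reducing to the case $j=1$, where the computation is shortest. The remaining ingredients — transversality of the Slodowy slice to the $K$-orbit stratification, the compatibility of IC sheaves with transverse restriction, and the identity $\dim\cO_{2^j1^{2n-2j+1}}=j(2n+1-j)$ — are standard.
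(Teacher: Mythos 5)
Your proof is correct, but it takes a genuinely different route from the paper. The paper argues entirely inside the resolution framework: it compares Reeder's resolutions $\sigma_i^{n}$ of $\bar\cO_{2^i1^{2n-2i+1}}$ across different ranks, proves by a double induction that the multiplicities in the decomposition theorem satisfy $(t^{i}_{l,k})_n=(t^{i-j}_{l-j,k})_{n-j}$, and then deduces the stalk identity from the coincidence of the fibers, both being $\on{OGr}(i-j,2n-2j+1)$. You instead use a transverse (Slodowy-type) slice $S_x=x+Z_{\fg_1}(y)$ for a normal triple and identify $S_x\cap\bar\cO_{2^i1^{2n-2i+1}}$ with $\bar\cO_{2^{i-j}1^{2n-2i+1}}$ for the smaller split pair, with $x\mapsto 0$. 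Your two key inputs check out: transversality of $S_x$ to all $K$-orbits it meets follows from $\fg_1=[\fg_0,x]\oplus Z_{\fg_1}(y)$ together with the contracting $\mathbb{G}_m$-action coming from $h\in\fg_0$ (so the cocharacter lies in $K$ and preserves the nilpotent $K$-orbits, which for odd $N$ are determined by Jordan type), exactly as in the Lie algebra case; and the slice computation is as you say: writing $V=(\bC^2\otimes\bC^j)\perp V'$ and decomposing $z\in Z_{\fg_1}(y)$ into blocks $P$ (weight-preserving on $H$, appearing twice), $R$, and the mixing blocks $S,T$, together with $U$ on $V'$, one finds $(x+z)^2$ has $H$-blocks $2P$, $P^2+R$ and mixing blocks $S$, $T$, so $(x+z)^2=0$ forces $P=R=S=T=0$ and $U^2=0$; the rank bound then cuts out exactly $\bar\cO'_{2^{i-j}1^{2n-2i+1}}$, and the shift by $\dim\cO_x=j(2n+1-j)=s_j$ matches the paper's normalization of $\IC$. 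What each approach buys: the paper's argument uses only machinery already set up (the maps $\sigma_i$, the decomposition \eqref{decomposition sigmai*C}) and no slice theory, at the cost of a somewhat opaque double induction; your argument is shorter and proves a strictly stronger, Kraft--Procesi-style statement, namely that the transverse singularity of $\bar\cO_{2^i1^{2n-2i+1}}$ along $\cO_{2^j1^{2n-2j+1}}$ is the smaller orbit closure itself, from which the stalk identity (and in fact Theorem \ref{SO=SP} type statements along this stratum) follow immediately.
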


\begin{remark}
Note that, in general, the singularities for 
$\bar\mO_{2^i1^{2n-2i+1}}$ and $\bar\mO'_{2^i1^{2n-2i}}$ are non-isomorphic.
For example, the Euler obstruction $\on{Eu}(0,\mO_{min}')$ for the minimal orbit $\mO_{min}':=\mO'_{2^11^{2n-2}}$ is zero
(see  \cite{EM}). On the other hand, using 
\[\on{CC}(\IC(\cO_{min},\bC))=
\on{CC}(\mathfrak F((\IC(\cO_{min},\bC)))=
\on{CC}(\IC(\fg_1,\mathcal L_1))\] and $\on{dim}\mathcal L_1=2n+1$ (see Theorem \ref{matching}),
one
can check that $\on{Eu}(0,\cO_{min})=2n$, where $\cO_{min}:=\mO_{2^11^{2n-1}}$.

\end{remark}

\subsection{Resolutions}\label{resolution}
For the proof of Theorem \ref{SO=SP} and 
Theorem \ref{SO=SP non-trivial local} we need several preliminary steps. 
We begin with the construction of resolutions of $\bar{\cO}_{2^i1^{2n+1-2i}}$ and 
$\bar{\cO}_{2^i1^{2n-2i}}'$.

For the orbit $\cO_{2^i1^{2n+1-2i}}\subset \cN_1$, $1\leq i\leq n$, 
consider Reeder's resolution map of $\bar{\cO}_{2^i1^{2n+1-2i}}$,  
\beqn
\sigma_i:\{(x,0\subset V_i\subset V_i^\p\subset V=\bC^{2n+1})\,|\,x\in\Lg_1,\ xV\subset V_i\}\to\bar{\cO}_{2^i1^{2n+1-2i}}.
\eeqn
For the orbit $\mO_{2^i1^{2n-2i}}'\subset\cN_{\mathfrak{sp}(2n)}$,
we have
the following resolution map for $\bar{\cO}_{2^i1^{2n-2i}}'$ (\cite{Hes})
\beqn
{\tau}_i:\{(x,0\subset U_i\subset U_i^\p\subset U=\bC^{2n})\,|\,x\in\mathfrak{sp}(2n),\ xU\subset U_i\}\to\bar{\cO}'_{2^i1^{2n-i}}.
\eeqn
Here $U$ is a $2n$-dimensional vector space equipped with a non-degenerate symplectic form $\langle,\rangle$ such that $\mathfrak{sp}(2n)=\mathfrak{sp}(U,\langle,\rangle)$, $\dim U_i=i$ and $U_i^\p=\{u\in U\,|\,\langle u,U_i\rangle=0\}$.

We show that for
$x\in\mO_{2^j1^{2n-2j+1}}$ and $x'\in\mO'_{2^j1^{2n-2j}}$, $j\leq i$,
\begin{eqnarray}
&&\text{We have  $\dim\sigma_i^{-1}(x)=\dim\tau_i^{-1}(x')$.
Moreover, $H^*(\sigma_i^{-1}(x),\bC)\is H^*(\tau_i^{-1}(x'),\bC)$.}\label{property1}
\\
&&\text{The action of $A(x):=Z_K(x)/Z_K(x)^0$ (resp. $A'(x'):=Z_{Sp(2n)}(x')/Z_{Sp(2n)}(x')^0$) }\label{property2}\\
&&\quad\text{ on $H^*(\sigma_i^{-1}(x),\bC)$ (resp. $H^*(\tau_i^{-1}(x'),\bC)$)
is trivial.}\nonumber
\end{eqnarray}
Let $s_{ij}=2\dim(\sigma_i^{-1}(x_j))-\on{codim}_{\bar\mO_{2^i1^{2n-2i+1}}}\mO_{2^j1^{2n-2j+1}}=2(i-j)(n-i)$, where $x_j\in\mO_{2^j1^{2n-2j+1}}$. It follows from \eqref{property1} and \eqref{property2} that
\begin{lemma}
We have
\begin{eqnarray}
&&(\sigma_{i})_*\bC[-]=
\bigoplus_{j=0}^i\bigoplus_{k=0}^{s_{ij}} 
\IC(\cO_{2^j1^{2n-2j+1}},\bC^{t_{jk}^i})[\pm k]\label{decomposition sigmai*C}\\
&& (\tau_{i})_*\bC[-]=\bigoplus_{j=0}^i\bigoplus_{k=0}^{s_{ij}}  
\IC(\cO'_{2^j1^{2n-2j}},\bC^{(t_{jk}^i)'})[\pm k],\label{decomposition tau}
\end{eqnarray}
here $t_{ik}^i=(t_{ik}^i)'=\delta_{k,0}$.
\end{lemma}

We prove \eqref{property1} and \eqref{property2}. For \eqref{property1}, we show that
\beq\label{eqn-fibers}
\sigma_i^{-1}(x)\cong\on{OGr}(i-j,2n-2j+1)\quad\text{and }\tau_i^{-1}({x}')\cong\on{SpGr}(i-j,2n-2j).
\eeq
For \eqref{property2}, we show that the action of $A(x)$ (resp. $A'(x')$)
 on $\sigma_i^{-1}(x)$ (resp. $\tau_i^{-1}(x')$) is trival, thus inducing a trivial action on $H^*(\sigma_i^{-1}(x),\bC)$
  (resp.  $H^*(\tau_i^{-1}(x'),\bC)$). 

Let us consider the case of $x\in\cO_{2^j1^{2n+1-2j}}$ first. The following lemma allows us to choose convenient basis for $V$.
\begin{lemma}\label{lemma-basis}
Let $x\in\cO_\lambda\subset\cN_1$, where $\lambda=(\lambda_1,\lambda_2,\ldots,\lambda_s)\in\mathbf{P}(2n+1)$. There exist $v_i\in V$, $i\in[1,s]$ such that $V=\Span\{x^{a_i}v_i,\ a_i\in[0,\lambda_i-1],\ i\in[1,s]\}$ and
\begin{eqnarray*}
&&x^{\lambda_i}v_i=0,\,\langle x^{a_i}v_i,x^{b_j}v_j\rangle_Q=\delta_{i,j}\delta_{a_i+b_j,\lambda_i-1},\ i,j\in[1,s].
\end{eqnarray*}
\end{lemma}
\begin{proof}We prove that the lemma holds also for even dimensional $V$ (the fact that some orbits when $\dim V$ is even are parametrized by the same partition does not affect the proof).
We prove by induction on $\dim\,V$. It is clear when $\dim\, V=1$. Assume that $\dim\,V>1$. One checks easily that there exists $v\in V$ such that $\langle v,x^{\lambda_1-1}v\rangle_Q\neq0$. We can assume that $\langle v,x^{\lambda_1-1}v\rangle_Q=1$. We have $x^{\lambda_1}v=0$ and $v,xv,\ldots,x^{\lambda_1-1}v$ are linearly independent. Let $W=\on{Span}\{v,xv,\ldots,x^{\lambda_1-1}v\}$. Then $V=W\oplus W^\p$ as $\langle,\rangle_Q|_{W}$ is non-degenerate. Note that $W^\p$ is $x$-stable and $x|_{W^\p}\in \cO_{\lambda'}$, where $\lambda'=(\lambda_2,\ldots,\lambda_s)$. The lemma follows by induction. 
\end{proof}
Take a basis $\{v_l,\ xv_l,\  l\in[1,j],\ w_s,\ s\in[1,2n+1-2j]\}$ of $V$ as in Lemma \ref{lemma-basis}. In terms of this basis, the fiber $\sigma_i^{-1}(x)$ can be described as follows. It consists of the flags $0\subset V_i\subset V_i^\perp\subset \bC^{2n+1}$, where $V_i=\Span\{xv_k,\ k\in[1,j]\}\oplus W_{i-j}$ and $W_{i-j}\subset\Span\{w_s,\  s\in[1,2n+1-2j]\}$ is such that $W_{i-j}\subset W_{i-j}^\p$. Thus \eqref{eqn-fibers} holds in this case. Define $g\in Z_K(x)$ as follows. When $j$ is odd, let
$
gv_k=-v_k,\ gw_s=w_s;
$
when $j$ is even, let
$gv_1=v_2,\ gv_2=v_1,\ gv_k=v_k,\ k\neq 1,2,\ gw_s=w_s.$ (Note that the actions of $g$ on other basis vectors are determined by the property that $g\in Z_K(x)$.) It is easy to see that $g\notin Z_K(x)^0$. Thus we can identify $A(x)$ with $\{g,1\}$. Now it is easy to see that $A(x)$ fixes each flag in $\sigma_i^{-1}(x)$, thus acting trivially.

The proof for the case of $x'\in\cO'_{2^j1^{2n-2j}}$ is entirely similar. There exist vectors\linebreak 
$
v_1',\ldots,v_j',w_1',\ldots,w_{n-j}',u_1',\ldots,u_{n-j}'\in U=\bC^{2n}$ such that $x^{'2}v_l'=0,\  l\in[1, j],\ x'w_s'=x'u_s'=0,\  s\in[1,n-j]$, $U=\on{Span}\{v_l',x'v_l',\ l\in[1,j], w_s',u_s',s\in[1,n-j]\}$, and
\begin{eqnarray*}
&&\ \langle v_k',v_l'\rangle=\langle v_k',w_s'\rangle=\langle v_k',u_s'\rangle=\langle w_s',w_t'\rangle=\langle u_s',u_t'\rangle=0,\ \langle v_k',x'v_l'\rangle=\delta_{k,l},\ \langle w_s',u_t'\rangle=\delta_{s,t}.
\end{eqnarray*}
(Note that $\langle x'v,w\rangle=-\langle v,x'w\rangle$ as $x'\in\mathfrak{sp}(2n)$.) In terms of this basis, the fiber $\tau_i^{-1}(x')$ can be described as follows. It consists of the flags $0\subset U_i\subset U_i^\perp\subset \bC^{2n}$, where $U_i=\Span\{x'v_k',1\leq k\leq j\}\oplus W_{i-j}'$ and $W_{i-j}'\subset\Span\{w_s',u_s',1\leq s\leq n-j\}$ is such that $W_{i-j}'\subset W_{i-j}^{'\p}$. Define $g'\in Z_{Sp(2n)}(x')$ as follows. When $j$ is odd, let
$
g'v_k'=-v_k',\ gw_s'=w_s',\ gu_s'=u_s';
$
when $j$ is even, let
$g'v_1'=v_2',\ g'v_2'=v_1',\  g'v_k'=v_k', k'\neq 1,2,\ g'w_s'=w_s',g'u_s'=u_s'.$ As before we can identify $A'(x')$ with $\{g',1\}$and it is easy to see that $A'(x')$ fixes each flag in $\tau_i^{-1}(x')$, thus acting trivially.

\subsection{The maps $\widetilde{\sigma}_i$ and $\widetilde{\tau}_i$ }\label{tilde tau, sigma}We preserve the notations from \S\ref{resolution}. 
For the proof of Theorem \ref{SO=SP non-trivial local}, we need 
the following auxiliary maps. 

For $\mO_{2^i1^{2n-2i+1}}\subset\cN_1$,
consider the map 
\beqn
\widetilde{\sigma}_i:\{(x,0\subset V_{i-1}\subset V_i\subset V_i^\p\subset V_{i-1}^\p\subset V)\,|\,x\in\Lg_1,\ xV\subset V_i,\ xV_{i-1}^\p\subset V_{i-1}\}\to\bar{\cO}_{2^i1^{2n+1-2i}}.
\eeqn
For $\mO'_{2^i1^{2n-2i}}\subset\cN_{\mathfrak{sp}(2n)}$,
consider the following map
\begin{eqnarray*}
\widetilde{\tau}_i:\{(x',0\subset U_{i-1}\subset U_i\subset U_i^\p\subset U_{i-1}^\p\subset U)\,|\,x'\in\mathfrak{sp}_{2n},\,x'U\subset U_i,\,x'U_{i-1}^\p\subset U_{i-1}\}\to\bar{\cO}'_{2^i1^{2n-2i}}.
\end{eqnarray*}

We show that for
$x\in\mO_{2^j1^{2n-2j+1}}$ and $x'\in\mO'_{2^j1^{2n-2j}}$, $j\leq i$,
\beq \label{property3}
\begin{gathered}
\text {$\dim\tilde\sigma_i^{-1}(x)=\dim\tilde\tau_i^{-1}(x')$, $H^*(\tilde\sigma_i^{-1}(x),\bC)\is H^*(\tilde\tau_i^{-1}(x'),\bC)$, and the latter }\\
\text{isomorphism is compatible with the actions of $A(x)$ and $A'(x')$.}\end{gathered}
\eeq
\beq\label{property4}
\begin{gathered}
\text{$A(x)$ (resp. $A'(x')$) acts trivially on }\text{$H^*(\tilde\sigma_i^{-1}(x),\bC)$ (resp. $H^*(\tilde\tau_i^{-1}(x'),\bC)$), $i$ odd.}
\end{gathered}
\eeq
It follows that
\begin{lemma}We have
\begin{eqnarray}
&&(\tilde\sigma_i)_*\bC[-]=\bigoplus_{j,k\geq 0} 
\IC(\cO_{2^j1^{2n-2j+1}},\bC^{m^i_{jk}})[\pm k]\bigoplus_{j\text{ even},\ k\geq 0}
\IC(\cO_{2^j1^{2n-2j+1}},\mE_j^{a^i_{jk}})[\pm k]\label{decomposition sigmatilde}\\
&&(\tilde\tau_i)_*\bC[-]=\bigoplus_{j,k\geq 0}
\IC(\cO'_{2^j1^{2n-2j}},\bC^{(m^i_{jk})'})[\pm k]\bigoplus_{j\text{ even},\ k\geq 0}
\IC(\cO'_{2^j1^{2n-2j}},(\mE_j')^{(a^i_{jk})'})[\pm k].\label{decomposition tautilde}
\end{eqnarray}
\end{lemma}

We prove \eqref{property3} and \eqref{property4} in the reminder of this subsection.

The fiber $\widetilde{\sigma}_i^{-1}(x)$ is a quadric bundle over
$
\sigma_i^{-1}(x)\cong\on{OGr}(i-j,2n-2j+1)
$
with fibers a quadric $\sum_{s=1}^jb_s^2=0$ in $\bP^{i-1}=\{[b_1:b_2:\cdots:b_i]\}$.
More precisely, we have an obvious map $\pi:\widetilde{\sigma}_i^{-1}(x)\to\sigma_i^{-1}(x)$,
\beqn
(0\subset V_{i-1}\subset V_i\subset V_i^\p\subset V_{i-1}^\p\subset V):=(V_{i-1}\subset V_i)\mapsto (0\subset V_i\subset V_i^\p\subset V):=(V_i)
\eeqn
by forgetting $V_{i-1}$. Now we describe the fibers of $\pi$. Recall that if $(V_i)\in\sigma_i^{-1}({x})$, then there exists $W_{i-j}\subset\Span\{w_k,\,k\in[1, 2n+1-2j]\}$ with $W_{i-j}\subset W_{i-j}^\p$ such that
$
V_i=\Span\{xv_k,\,k\in[1,j]\}\oplus W_{i-j}.
$ 
Let $[b_1:b_2:\cdots:b_i]$ be the homogenous coordinates of $\bP(V_i)$ given by the basis $\{xv_k,\,k\in[1,j],\,\tilde{w_l},\,l\in[1,i-j]\}$ of $V_i$, where $\{\tilde{w_l}\}$ is a basis of $W_{i-j}$. It is easy to check that the fibers of 
$\pi$ are isomorphic to the quadric $Q:\sum_{s=1}^jb_s^2=0$ in $\bP(V_i)\cong\bP^{i-1}$. It follows that
\beqn
H^*(\widetilde{\sigma}_i^{-1}(x),\bC)\cong H^*(Q,\bC)\otimes H^*(\sigma_i^{-1}(x),\bC).
\eeqn
We describe the action of $A(x)$ on $H^*(\widetilde{\sigma}_i^{-1}(x),\bC)$. As we have shown that $A(x)$ acts trivially on $H^*(\sigma_i^{-1}(x),\bC)$, it suffices to describe the action of $A(x)$ on $H^*(Q,\bC)$.

We claim that if $j$ is odd, then $A(x)$ acts on $H^*(Q,\bC)$ trivially, thus acting trivially on $H^*(\widetilde{\sigma}_i^{-1}(x),\bC)$, and
if $j$ is even, then $A(x)$ acts on $H^{2k}(Q,\bC)$ trivially if $2k\neq 2i-j-2$ and $H^{2i-j-2}(Q,\bC)\cong\bC\oplus\cE$. This follows from the following lemma.

\begin{lemma}
Let $Q$ be the quadric given by the equation $\sum_{i=1}^k a_i^2 = 0$ in $\bP^{n-1}$ with coordinates $[a_1, \dots, a_n]$ and consider the automorphism $\gamma$ of $\bP^{n-1}$ given by $\gamma[a_1, a_2, a_3, \dots, a_n] = [a_2, a_1, a_3, \dots, a_n]$. If $k$ is odd, then $\gamma$ acts trivially on $H^*(Q,\bC)$. If $k$ is even, $\gamma$ acts trivially on $H^{2j}(Q)$ for $j\neq 2n-k-2$ and the action on the two dimensional space $H^{2n-k-2}(Q,\bC)$ has eigenvalues $1$ and $-1$.
\end{lemma}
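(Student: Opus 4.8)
The plan is to reduce the statement to two standard facts: the Betti numbers of a quadric cone, and the Lefschetz fixed point formula for the finite-order automorphism $\gamma$. Recall that $Q\subset\bP^{n-1}$ is the quadric cone of rank $k$, i.e.\ the projective cone over a smooth quadric $Q_0$ of dimension $k-2$ with vertex the linear space $L=\{a_1=\cdots=a_k=0\}$. Its cohomology is concentrated in even degrees, and (as recorded in the discussion preceding the lemma, or via the affine-bundle decomposition $Q\setminus L\to Q_0$) one has $H^{2j}(Q,\bC)\is\bC$ for $0\le j\le n-2$, with the single exception that $H^{2n-k-2}(Q,\bC)$ is two-dimensional when $k$ is even. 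Since $\gamma$ is induced by a linear automorphism of $\bP^{n-1}$, it fixes the hyperplane class $h\in H^2(Q,\bC)$, hence every power $h^j$.

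First I would dispose of the one-dimensional graded pieces. For $0\le j\le n-2$ we have $h^j\neq 0$ in $H^{2j}(Q,\bC)$: indeed $h^j\smile h^{n-2-j}=h^{n-2}$, and $h^{n-2}\neq 0$ since it computes $\deg Q=2$. Therefore whenever $H^{2j}(Q,\bC)$ is one-dimensional it is spanned by $h^j$ and $\gamma^*$ acts there as the identity. This already proves the lemma when $k$ is odd, and proves the first assertion when $k$ is even.

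It remains, for $k$ even, to analyse $V:=H^{2n-k-2}(Q,\bC)\is\bC^2$. The vector $h^{\,n-k/2-1}\in V$ is nonzero and $\gamma^*$-fixed, and $\gamma^*$ is an involution, so $\gamma^*|_V$ has eigenvalues $1$ and $\epsilon$ with $\epsilon\in\{1,-1\}$; the point is to show $\epsilon=-1$. For this I would invoke the Lefschetz fixed point formula
\[\sum_i(-1)^i\tr\bigl(\gamma^*\mid H^i(Q,\bC)\bigr)=\chi(Q^\gamma),\]
valid for a finite-order automorphism of the (possibly singular) projective variety $Q$; since $H^{\mathrm{odd}}(Q)=0$ the left side equals $\sum_j\tr(\gamma^*\mid H^{2j}(Q,\bC))=(n-2)+(1+\epsilon)$ by the previous step. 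To evaluate the right side, pass to coordinates $c_\pm=a_1\pm a_2$: then $\gamma$ becomes $c_-\mapsto -c_-$ fixing all other coordinates, the rank-$k$ quadratic form still involves $c_-$, and the isolated candidate fixed point $[0:1:0:\cdots:0]$ does not lie on $Q$, so $Q^\gamma=Q\cap\{c_-=0\}$ is a quadric cone of rank $k-1$ (which is odd) in $\bP^{n-2}$. By the one-dimensional case of the Betti-number description, $\chi(Q^\gamma)=n-2$. Comparing, $(n-2)+(1+\epsilon)=n-2$, whence $\epsilon=-1$.

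The only points needing extra care are the small-rank degenerate cases (e.g.\ $k=2$, where $Q$ or $Q^\gamma$ may be reducible or non-reduced — these are handled by direct inspection and are consistent with the formulas above) and the precise form of the Lefschetz fixed point formula with $\chi(Q^\gamma)$ on the right for singular $Q$, which one gets by stratifying $Q$ into $\gamma$-stable smooth locally closed pieces. I expect the Lefschetz count in the previous paragraph to be essentially the whole content of the argument.
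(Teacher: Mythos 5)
Your proof is correct, but the key step is genuinely different from the paper's. Both arguments share the first reduction: using the cone structure of the rank-$k$ quadric one sees that every even-degree cohomology group is one-dimensional except $H^{2n-k-2}(Q,\bC)$ when $k$ is even, and that the one-dimensional pieces are spanned by powers of the hyperplane class, hence $\gamma^*$-fixed (the paper phrases this as the join decomposition $H^i(Q)\is H^{i-2n+2k}(\tilde Q)\oplus H^i(L)$ and triviality on non-primitive classes; your cup-product argument $h^j\smile h^{n-2-j}=h^{n-2}\neq 0$ does the same job). Where you diverge is in pinning down the remaining eigenvalue for $k$ even: the paper works geometrically on the smooth quadric $\tilde Q$ of dimension $k-2$, citing Reid's theorem that the middle cohomology is spanned by the classes of the two families of maximal linear subspaces and observing that $\gamma\in O_{2k_0}\setminus SO_{2k_0}$ interchanges the families, so it swaps the two spanning classes and has eigenvalues $1,-1$; you instead run a global Lefschetz fixed-point count, computing $Q^\gamma$ to be a rank-$(k-1)$ quadric cone in $\bP^{n-2}$ of Euler characteristic $n-2$ and comparing with the trace $(n-2)+(1+\epsilon)$. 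Your route buys independence from Reid's description of $H^{k-2}(\tilde Q)$ and from the family-swapping geometry, at the modest cost of invoking the Lefschetz fixed point formula for a finite-order automorphism of a possibly singular projective variety (standard, and your stratification justification is adequate) and of checking the degenerate low-rank cases, which you flag; the paper's route is more explicitly geometric and exhibits the eigenvectors as $a\pm b$. Your bookkeeping (Betti numbers, nonvanishing of $h^j$, the fixed locus $\bP(E_+)\sqcup\bP(E_-)$ with the isolated point off $Q$, and $\chi(Q^\gamma)=n-2$) all checks out.
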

\begin{proof}
The quadric $Q$ is the join of the nonsingular quadric $\tilde Q$ in $\bP^{k-1}\subset \bP^{n-1}$ given by $\sum_{i=1}^k a_i^2 = 0$ and the linear subspace $L$ of dimension $n-k-1$ given by $a_{1} =  \dots = a_k = 0$. Now $Q-L$ is an affine space bundle over $\tilde Q$ of fiber dimension $n-k$. Thus, $H^i_c(Q-L,\bC)= H^{i-2n+2k}(\tilde Q,\bC)$. As $\tilde Q$ and $L$ only have (compactly supported) cohomology in even degrees we conclude that
\begin{equation*}
H^i(Q,\bC)= H^{i-2n+2k}(\tilde Q,\bC)\oplus H^i(L,\bC)
\end{equation*}
The automorphism $\gamma$ of course acts trivially on the cohomology of $\bP^{n-1}$ and hence it acts trivially on the cohomology of $L$ (it even acts trivially on $L$ itself). Thus we are reduced to consider the action of $\gamma$ on the cohomology of $\tilde Q$. The action on the non-primitive cohomology is trivial and so the only possibly nontrivial action is on $H^{k-2}_{prim}(\tilde Q,\bC)$. If $k$ is odd, this group is zero and so we are reduced to the case of $k$ even. 

Assume that $k$ is even and $k=2k_0$. The variety of $(k_0-1)$-planes contained in $\tilde{Q}$ has two disjoint irreducible components, which can be identified with the two disjoint irreducible components of the variety of maximal isotropic spaces in $\bC^{2k_0}$ (equipped with the standard bilinear form). It is clear that $\gamma$, regarded as an element in $O_{2k_0}-SO_{2k_0}$, interchanges these two irreducible components. Now Reid in \cite[Theorem 1.12]{Re} has shown that $H^{k-2}(\tilde{Q},\bC)=\operatorname{span}(a,b)$, where $a$ and $b$ are the classes of $(k_0-1)$-planes from the two families respectively. Thus our lemma follows. 
\end{proof}

The fiber $\widetilde{\tau}_i^{-1}({x}')$ is a quadric bundle over
$
\tau_i^{-1}(x')\cong\on{SpGr}(i-j,2n-2j)
$
with fibers a quadric $\sum_{s=1}^jb_s^2=0$ in $\bP^{i-1}$. The action of $A'(x')$ on $H^*(\widetilde{\tau}_i^{-1}({x}'),\bC)$ is entirely similar. One checks readily that~\eqref{property3} and \eqref{property4} hold.

\subsection{Proof of Theorem \ref{SO=SP}, Theorem \ref{SO=SP non-trivial local} and Proposition \ref{reduction to zero}} The proofs  are based on some well-known principle by making use of the resolutions $\sigma_i$, $\tau_i$ of 
$\bar\mO_{2^i1^{2n-2i+1}}$, $\bar\mO'_{2^i1^{2n-2i}}$  defined 
in \S\ref{resolution}, and the maps $\tilde\sigma_i$, $\tilde\tau_i$  introduced in \S\ref{tilde tau, sigma} for $i$ even. We give the detailed proof for Theorem \ref{SO=SP} in the following. The proofs of Theorem \ref{SO=SP non-trivial local} and Proposition \ref{reduction to zero}
are entirely similar and we leave that to the readers. We mention only that the proof of Proposition \ref{reduction to zero} indicates the following 
\beq\label{lem-decomposition numbers reduction}
\text{$(t_{l,k}^i)_n=(t_{l-j,k}^{i-j})_{n-j}$ for $j\leq l< i$.}
\eeq
Here the numbers are written as $(t_{l,k}^i)_n$ to indicate that the ambient symmetric pair is $(SL(2n+1), SO(2n+1))$ and they are defined in~\eqref{decomposition sigmai*C}.

To prove Theorem \ref{SO=SP}, we begin with the following lemma.
\begin{lemma}\label{stalks are equal}
 Let $i\in[1,n]$. Assume that $t_{jk}^{i'}=(t_{jk}^{i'})'$ for all $j,k$, and all $i'\leq i$ in \eqref{decomposition sigmai*C} and \eqref{decomposition tau}. Then we have 
\[\mathcal H_x^l\IC(\cO_{2^i1^{2n-2i+1}},\bC)\is
\mathcal H_{x'}^l\IC(\cO'_{2^i1^{2n-2i}},\bC)\] for 
$x\in\mO_{2^j1^{2n-2j+1}}$, $x'\in\mO'_{2^j1^{2n-2j}}$ and $j\leq i$.

\end{lemma}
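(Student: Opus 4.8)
The plan is to deduce the isomorphism of stalks from the two decomposition theorems \eqref{decomposition sigmai*C} and \eqref{decomposition tau} by a downward induction on $j$, running $j$ from $i$ to $0$, having already fixed $i$ and the hypothesis that all multiplicities agree, $t^{i'}_{jk}=(t^{i'}_{jk})'$, for $i'\le i$. The point is that the stalk of $(\sigma_i)_*\bC[-]$ at a point $x\in\cO_{2^j1^{2n-2j+1}}$ is simply $H^*(\sigma_i^{-1}(x),\bC)$ (up to a degree shift by $\dim$ of the relevant spaces), which by \eqref{property1} is canonically isomorphic to $H^*(\tau_i^{-1}(x'),\bC)$, the corresponding stalk of $(\tau_i)_*\bC[-]$. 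So the full stalk complexes of $(\sigma_i)_*\bC[-]$ and $(\tau_i)_*\bC[-]$ at corresponding points agree. On the other hand, by the decomposition theorems these stalk complexes are direct sums of the $\mathcal H^l_x\IC(\cO_{2^{j'}1^{2n-2j'+1}},\bC)$ (resp.\ primed) for $j'\ge j$, each with multiplicity $t^i_{j'k}$ (resp.\ $(t^i_{j'k})'$) and shifted by $\pm k$.

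First I would set up the bookkeeping precisely: write out the stalk of \eqref{decomposition sigmai*C} at $x\in\cO_{2^j1^{2n-2j+1}}$ as $\bigoplus_{j'=j}^{i}\bigoplus_{k}\big(\mathcal H^{\bullet}_{x}\IC(\cO_{2^{j'}1^{2n-2j'+1}},\bC)\big)^{t^i_{j'k}}$ with shifts, noting that the $j'<j$ terms vanish at $x$ since $x\notin\bar\cO_{2^{j'}1^{2n-2j'+1}}$ for $j'<j$ (here one uses the closure order on these orbits). Do the same on the symplectic side. The outermost term, $j'=i$, has $t^i_{ik}=(t^i_{ik})'=\delta_{k,0}$ and $\IC(\cO_{2^i1^{2n-2i+1}},\bC)$ (resp.\ primed) restricted to the open orbit in its support is just the shifted constant sheaf — but we want stalks at the smaller orbit $\cO_{2^j1^{2n-2j+1}}$, so these are exactly the stalks we are trying to compare. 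The strategy is: by downward induction on $j$, assume the isomorphism $\mathcal H^l_x\IC(\cO_{2^{j'}1^{2n-2j'+1}},\bC)\cong\mathcal H^l_{x'}\IC(\cO'_{2^{j'}1^{2n-2j'}},\bC)$ is already known for all $j'$ with $j< j'\le i$; then since the two full stalk complexes agree and all the multiplicities $t^i_{j'k}=(t^i_{j'k})'$ agree, the remaining $j'=j$ summands must also have isomorphic cohomology in each degree $l$. The base case $j=i$ is slightly different: there the $j'=i$ term is the only one, with multiplicity $\delta_{k,0}$, so the agreement of the two stalk complexes immediately gives $\mathcal H^l_x\IC(\cO_{2^i1^{2n-2i+1}},\bC)\cong\mathcal H^l_{x'}\IC(\cO'_{2^i1^{2n-2i}},\bC)$ for $x,x'$ in the next-to-smallest relevant orbit. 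Iterating down to $j=0$ finishes the lemma.

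The one subtlety I would be careful about is the cancellation step: knowing that $\bigoplus_k V_{l+k}\oplus V_{l-k}$-type sums (with fixed multiplicities) agree on both sides does not by itself force the individual graded pieces $V_l$ to agree — one needs the ranges of $k$ and the parity/degree information to line up so that the $j'=j$ contribution is isolated in each fixed cohomological degree. The clean way to do this is to argue that for $j'>j$ the stalks $\mathcal H^l_x\IC(\cO_{2^{j'}1^{2n-2j'+1}},\bC)$ at the \emph{smaller} point $x\in\cO_{2^j1^{2n-2j+1}}$ are, by induction, equal to their symplectic counterparts \emph{degree by degree}, so after subtracting these (with the common multiplicities and shifts) from both total stalk complexes one is left with exactly $\big(\mathcal H^{\bullet}_x\IC(\cO_{2^j1^{2n-2j+1}},\bC)\big)$ on the orthogonal side and $\big(\mathcal H^{\bullet}_{x'}\IC(\cO'_{2^j1^{2n-2j}},\bC)\big)$ on the symplectic side (multiplicity one, shift zero, since $t^i_{jk}$ for the top stratum of the remaining part is $\delta_{k,0}$ — actually here one must take $i=j$, i.e.\ really run the induction with $\sigma_j,\tau_j$ rather than $\sigma_i,\tau_i$, to get multiplicity one), and these must therefore be isomorphic. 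So the cleanest formulation is: for each fixed $j$, apply the hypothesis with $i$ replaced by $j$, use that $\sigma_j$ has $\sigma_j^{-1}(x)$ a point hence $\IC(\cO_{2^j1^{2n-2j+1}},\bC)$ appears with multiplicity $\delta_{k,0}$ at the top, and peel off the lower strata using induction on $j$. The main obstacle, then, is purely organizational — making sure the multiplicity-one isolation of the $\cO_{2^j1^{2n-2j+1}}$-summand is genuinely valid, which it is precisely because $t^i_{ik}=\delta_{k,0}$ — rather than any deep geometric input; all the geometry (the fiber computations \eqref{eqn-fibers}, the triviality of the component group action \eqref{property2}) has already been done in \S\ref{resolution}.
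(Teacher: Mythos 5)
Your core mechanism -- identify the stalks of $(\sigma_i)_*\bC[-]$ and $(\tau_i)_*\bC[-]$ at corresponding points via \eqref{property1}, use the matching multiplicities, and peel off the summands whose stalks are already known -- is the same as the paper's, but the induction you set up is circular exactly at the summand that carries the content. At $x\in\cO_{2^j1^{2n-2j+1}}$ the summand $\IC(\cO_{2^j1^{2n-2j+1}},\bC^{t^i_{jk}})[\pm k]$ of \eqref{decomposition sigmai*C} is the \emph{known} one: $x$ lies in the open stratum of its support, so its stalk is just $\bC^{t^i_{jk}}$ in prescribed degrees. The \emph{unknown}, i.e.\ the statement of the lemma, is the stalk at $x$ of the top summand $\IC(\cO_{2^i1^{2n-2i+1}},\bC)$. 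Your downward induction on $j$ assumes that the stalks at $x$ of all $\IC(\cO_{2^{j'}1^{2n-2j'+1}},\bC)$ with $j<j'\le i$ already match their symplectic counterparts; for $j'=i$ that is precisely what is to be proved, and it is not delivered by earlier stages of your induction, which concern stalks at points of the \emph{larger} orbits, not at $x$. After subtracting, what you isolate is $\mH^\bullet_x\IC(\cO_{2^j1^{2n-2j+1}},\bC)$ versus $\mH^\bullet_{x'}\IC(\cO'_{2^j1^{2n-2j}},\bC)$, which is a triviality (both are $\bC$ in a single degree). Your ``cleanest formulation'' has the same problem: for $x\in\cO_{2^j1^{2n-2j+1}}$ the fiber $\sigma_j^{-1}(x)$ is a point and all lower summands of $(\sigma_j)_*\bC[-]$ vanish at $x$, so using $\sigma_j,\tau_j$ only recovers this trivial stalk and says nothing about $\IC(\cO_{2^i1^{2n-2i+1}},\bC)$ for $i>j$.

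The repair is to run the induction on the top index $i$, which is what the paper does and why the hypothesis is stated for all $i'\le i$. The base case $i=1$ is checked directly. For the inductive step, fix $x\in\cO_{2^j1^{2n-2j+1}}$ and $x'\in\cO'_{2^j1^{2n-2j}}$: the summands $\IC(\cO_{2^s1^{2n-2s+1}},\bC)$ with $j\le s<i$ have matching stalks at $x$ -- trivially for $s=j$, and for $j<s<i$ by the lemma with top index $s$, available by the induction hypothesis (this is where the assumption $t^{s}_{jk}=(t^{s}_{jk})'$ for smaller $s$ is used); the total stalks match because $\sigma_i^{-1}(x)\cong\tau_i^{-1}(x')$ by \eqref{eqn-fibers} and \eqref{property1}; and the multiplicities $t^i_{sk}=(t^i_{sk})'$ match by hypothesis. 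Since \eqref{decomposition sigmai*C} and \eqref{decomposition tau} are direct sums, subtracting degree by degree then determines $\mH^l_x\IC(\cO_{2^i1^{2n-2i+1}},\bC)$ and identifies it with $\mH^l_{x'}\IC(\cO'_{2^i1^{2n-2i}},\bC)$. In particular your worry about cancellation evaporates: once all other summands are matched degree by degree, the remaining one is pinned down in each degree by a dimension count, and no multiplicity-one isolation beyond $t^i_{ik}=\delta_{k,0}$ is needed.
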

\begin{proof}
We prove the lemma  by induction on $i$. The case when $i=1$ is clear. In fact, we only need to check the conclusion of the lemma for $j=0$. We have
\beqn
(\sigma_1)_*\bC[-]=\IC(\mO_{2^11^{2n-1}},\bC)\oplus\bigoplus_{k\geq 0}
\IC(\mO_{1^{2n+1}},\bC^{t_{0k}^1})[\pm k].
\eeqn
\beqn
(\tau_1)_*\bC[-]=\IC(\mO'_{2^11^{2n-2}},\bC)\oplus\bigoplus_{k\geq 0}
\IC(\mO_{1^{2n}},\bC^{(t_{0k}^1})')[\pm k].
\eeqn
It is clear that $\mH^l_0\IC(\mO_{2^11^{2n-1}},\bC)\cong \mH^l_0\IC(\mO'_{2^11^{2n-2}},\bC)$ as they are determined by the cohomology of $\sigma_1^{-1}(0)\cong\tau_1^{-1}(0)$ and the numbers $(t_{0k}^1)=(t_{0k}^1)'$ in the same way. 

By induction hypothesis, we can assume  that for $s<i$
\beq\label{induction s}
\mathcal H_x^k\IC(\cO_{2^{s}1^{2n-2s+1}},\bC)\is
\mathcal H_{x'}^k\IC(\cO'_{2^s1^{2n-2s}},\bC).
\eeq
Recall that
\[(\sigma_i)_*\bC[-]=\IC(\mO_{2^i1^{2n-2i+1}},\bC)\oplus\bigoplus_{s<i}
\IC(\mO_{2^s1^{2n-2s+1}},\bC^{t_{sk}^i})[\pm k].\]
This implies that the stalks of $\IC(\mO_{2^i1^{2n-2i+1}},\bC)$
are uniquely determined by 
the stalks of 
$\IC(\mO_{2^s1^{2n-2s+1}},\bC)$ for $s<i$, 
the cohomology groups 
of the fibers of the map $\sigma_i$, and 
the numbers $t_{sk}^i$, $s<i$. 
Similarly, the stalks of $\IC(\mO'_{2^i1^{2n-2i}},\bC)$ are 
uniquely determined, in the same way, by the stalks of 
$\IC(\mO'_{2^s1^{2n-2s}},\bC)$ for $s<i$,  the cohomology groups 
of the fibers of the map $\tau_i$, and the numbers $(t_{sk}^i)'$, $s<i$. 
Now the desired claim follows form 
(\ref{induction s}), \eqref{property1}, and the assumption that $t^i_{jk}=(t_{jk}^i)'$.
\end{proof}

By the lemma above, to prove Theorem \ref{SO=SP}, it suffices to show the following 
\begin{lemma}\label{decomposition n}
We have $t^i_{jk}=(t^i_{jk})'$. 
\end{lemma}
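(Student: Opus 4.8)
The plan is to prove $t^i_{jk}=(t^i_{jk})'$ by induction on $i$, matching the two decompositions $(\sigma_i)_*\bC[-]$ and $(\tau_i)_*\bC[-]$ term by term. The base case $i=1$ is immediate since $\sigma_1^{-1}(0)\cong\on{OGr}(0,2n+1)$ is a point and $\tau_1^{-1}(0)\cong\on{SpGr}(0,2n)$ is a point, so the only multiplicities are $t^1_{0k}=(t^1_{0k})'=\delta_{k,0}$, and $\IC(\cO_{2^11^{2n-1}},\bC)$, $\IC(\cO'_{2^11^{2n-2}},\bC)$ are the constant sheaves on smooth resolutions. For the inductive step, assume $t^{i'}_{jk}=(t^{i'}_{jk})'$ for all $i'<i$. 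By Lemma~\ref{stalks are equal} this gives $\mathcal H^l_x\IC(\cO_{2^s1^{2n-2s+1}},\bC)\cong\mathcal H^l_{x'}\IC(\cO'_{2^s1^{2n-2s}},\bC)$ for all $s<i$ and all $x\in\mO_{2^j1^{2n-2j+1}}$, $x'\in\mO'_{2^j1^{2n-2j}}$ with $j\leq s$.

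The key observation is that both $t^i_{jk}$ and $(t^i_{jk})'$ can be read off from stalk cohomology of the pushforwards at a point $x\in\mO_{2^j1^{2n-2j+1}}$ (resp.\ $x'\in\mO'_{2^j1^{2n-2j}}$). Indeed, taking stalks of \eqref{decomposition sigmai*C} at such an $x$ gives
\beqn
H^*(\sigma_i^{-1}(x),\bC) \;=\; \bigoplus_{s=j}^i \bigoplus_k \mathcal H^{*}_x\IC(\cO_{2^s1^{2n-2s+1}},\bC^{t^i_{sk}})[\pm k]\,,
\eeqn
and symmetrically for $\tau_i$. By \eqref{property1} the left-hand sides agree (canonically, after the shift $s_{ij}$). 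The summands with $s<i$ on both sides agree by the inductive hypothesis (the local systems are trivial, and the multiplicities $t^i_{sk}$ for $s<i$ are the ones we want to identify — so I should run the induction simultaneously in decreasing order of $j$, or equivalently isolate the $s=i$ term first on the deepest stratum $j=i$ where $\IC(\cO_{2^i1^{2n-2i+1}},\bC)$ restricts to the constant sheaf on a point, forcing nothing, then peel off strata). Concretely: on the open stratum $j=i$ one has $t^i_{ik}=(t^i_{ik})'=\delta_{k,0}$ by hypothesis; then descending $j=i-1,i-2,\dots,0$, at each stage the contribution of $\IC(\cO_{2^i1^{2n-2i+1}},\bC)$ to $H^*(\sigma_i^{-1}(x_j),\bC)$ is determined, and comparing with the $\tau_i$ side — whose $\IC(\cO'_{2^i1^{2n-2i}},\bC)$-contribution is governed identically by the now-established stalk isomorphism for the orbit closure $\bar\cO_{2^i1^{2n-2i+1}}$ itself — pins down $t^i_{jk}=(t^i_{jk})'$.

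To make the descent rigorous I would argue as follows. Fix $i$ and suppose inductively (inner induction on $i-j$) that $t^i_{j'k}=(t^i_{j'k})'$ for all $j'>j$. The stalk at $x_j$ of $(\sigma_i)_*\bC[-]$ is $H^*(\sigma_i^{-1}(x_j),\bC)$ up to shift; subtracting off the contributions of all $\IC(\cO_{2^{j'}1^{2n-2j'+1}},\bC)$ with $j<j'\leq i$ (which are known: their stalks at $x_j$ agree with the $\mathfrak{sp}$-analogues by the outer inductive hypothesis together with Lemma~\ref{stalks are equal} applied to $i'<i$, and for $j'=i$ by the inner hypothesis combined with the fact that, once all $t^i_{j'k}$ for $j'>j$ are matched, $\mathcal H^l_{x_j}\IC(\cO_{2^i1^{2n-2i+1}},\bC)\cong\mathcal H^l_{x'_j}\IC(\cO'_{2^i1^{2n-2i}},\bC)$ follows from the same bootstrapping as in Lemma~\ref{stalks are equal}), one is left with $\bigoplus_k \IC(\cO_{2^j1^{2n-2j+1}},\bC^{t^i_{jk}})[\pm k]$ restricted to $x_j$, i.e.\ $\bigoplus_k \bC^{t^i_{jk}}[\pm k]$ placed in the appropriate degrees. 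The identical computation on the $\mathfrak{sp}$ side yields $\bigoplus_k\bC^{(t^i_{jk})'}[\pm k]$, and since both equal the same piece of $H^*(\sigma_i^{-1}(x_j),\bC)\cong H^*(\tau_i^{-1}(x'_j),\bC)$, we get $t^i_{jk}=(t^i_{jk})'$.

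The main obstacle I anticipate is organizing the nested inductions cleanly: one must be careful that the appeal to ``stalks of $\IC(\cO_{2^i1^{2n-2i+1}},\bC)$ agree with $\IC(\cO'_{2^i1^{2n-2i}},\bC)$'' at intermediate strata is not circular — it is exactly the content of Lemma~\ref{stalks are equal}, which requires $t^{i'}_{jk}=(t^{i'}_{jk})'$ only for $i'\leq i$, and within level $i$ only needs the decomposition data for strata deeper than the one being examined. So the logically correct order is: outer induction on $i$; inside, simultaneously establish (by descending induction on $j$) both $t^i_{jk}=(t^i_{jk})'$ and the stalk identity $\mathcal H^l_{x_j}\IC(\cO_{2^i1^{2n-2i+1}},\bC)\cong\mathcal H^l_{x'_j}\IC(\cO'_{2^i1^{2n-2i}},\bC)$. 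The geometric input \eqref{property1} (that $\sigma_i^{-1}(x_j)\cong\on{OGr}(i-j,2n-2j+1)$ and $\tau_i^{-1}(x'_j)\cong\on{SpGr}(i-j,2n-2j)$ have isomorphic cohomology, which holds since $\on{OGr}(k,2m+1)$ and $\on{SpGr}(k,2m)$ have the same Poincaré polynomial) is what makes the two sides match, and $A(x_j)$, $A'(x'_j)$ acting trivially \eqref{property2} ensures no nontrivial local systems intrude into these particular decompositions.
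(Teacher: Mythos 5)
Your overall strategy---outer induction on $i$, inner descending induction on $j$, and comparison of the two decompositions through the isomorphism $H^*(\sigma_i^{-1}(x),\bC)\cong H^*(\tau_i^{-1}(x'),\bC)$ of \eqref{property1}---is the same as the paper's, but the step where you extract $t^i_{jk}$ at the stratum $\cO_{2^j1^{2n-2j+1}}$ is circular as written. You subtract from $H^*(\sigma_i^{-1}(x_j),\bC)$ the contributions of all $\IC(\cO_{2^{j'}1^{2n-2j'+1}},\bC)$ with $j<j'\le i$, and for the term $j'=i$ you justify knowing $\mH^l_{x_j}\IC(\cO_{2^i1^{2n-2i+1}},\bC)\cong\mH^l_{x'_j}\IC(\cO'_{2^i1^{2n-2i}},\bC)$ by ``the same bootstrapping as in Lemma \ref{stalks are equal}'', using only the already matched multiplicities $t^i_{j'k}$ with $j'>j$. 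But that bootstrapping determines the stalk of $\IC(\cO_{2^i1^{2n-2i+1}},\bC)$ at $x_j$ from the fiber cohomology at $x_j$ together with the multiplicities $t^i_{j'k}$ for \emph{all} $j\le j'<i$, including $j'=j$, which is exactly what you are trying to prove; matching the multiplicities over the strata with $j'>j$ gives no information about the local singularity of $\bar\cO_{2^i1^{2n-2i+1}}$ along $\cO_{2^j1^{2n-2j+1}}$. The missing idea, which is the paper's key observation, is a degree separation that makes the top IC irrelevant at this stage: by the support condition, $\IC(\cO_{2^i1^{2n-2i+1}},\bC)|_{x_j}$ is concentrated in degrees $<-\dim\cO_{2^j1^{2n-2j+1}}$, while the summands $\IC(\cO_{2^j1^{2n-2j+1}},\bC^{t^i_{jk}})[-k]$, $k\ge 0$, restrict to $x_j$ in degrees $\ge-\dim\cO_{2^j1^{2n-2j+1}}$. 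Hence in that degree range the stalk of $(\sigma_i)_*\bC[-]$ involves only the fiber cohomology, the stalks of the intermediate $\IC(\cO_{2^{j'}1^{2n-2j'+1}},\bC)$ for $j<j'<i$ (matched by the outer hypothesis via Lemma \ref{stalks are equal}, with multiplicities matched by the inner hypothesis), and the numbers $t^i_{jk}$; together with the $[\pm k]$ symmetry of the decomposition this pins down $t^i_{jk}=(t^i_{jk})'$ without ever using the stalk of the top IC at $x_j$. The stalk identity for the top IC at $x_j$ is then a \emph{consequence} (it is Lemma \ref{stalks are equal}), not an ingredient. Your closing assertion that within level $i$ one ``only needs the decomposition data for strata deeper than the one being examined'' is precisely what fails without this degree argument.

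Separately, your base case is computed incorrectly: for $i=1$ and $x=0$ one has $\sigma_1^{-1}(0)\cong\on{OGr}(1,2n+1)$, a smooth quadric of dimension $2n-1$, and $\tau_1^{-1}(0)\cong\on{SpGr}(1,2n)\cong\bP^{2n-1}$, not points; accordingly $t^1_{0k}=(t^1_{0k})'=1$ for even $k$ in $[0,2(n-1)]$ and $0$ otherwise, rather than $\delta_{k,0}$. The conclusion of the base case still holds because the odd-dimensional quadric and $\bP^{2n-1}$ have the same Betti numbers, but your justification as written is false.
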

\begin{proof}We argue by induction on $i$. The case when $i=1$ is easy to check, i.e. we have that $t^1_{0,k}=(t^1_{0,k})'=1$ when $k\in[0,s_{1j}]$ and $k$ is even, and $t^1_{0k}=(t^1_{0k})'=0$ otherwise. 

So by induction hypothesis we can assume that for all $s< i$, 
$t^s_{jk}=(t^s_{jk})'$. By lemma \ref{stalks are equal}, we have for $s<i$,
\beq\label{equality for stalks 1)}
\mathcal H_x^k\IC(\cO_{2^{s}1^{2n-2s+1}},\bC)\is
\mathcal H_{x'}^k\IC(\cO'_{2^{s}1^{2n-2s}},\bC)
\eeq
for 
$x\in\mO_{2^j1^{2n-2j+1}}$, $x'\in\mO'_{2^j1^{2n-2j}}$ and $j\leq s$. 

We show that $t^i_{jk}=(t^i_{jk})'$ by induction on $j$. The case $j=i$ is clear.
So by induction, we can assume that $t_{j'k}^{i}=(t_{j'k}^{i})'$ holds for 
$j<j'\leq i$.  Then, for $x_j\in\mO_{2^j1^{2n-2j+1}}$, we have 
\beq\label{decomposition sigma}
(\sigma_i)_*\bC[-]|_{x_j}=
\IC(\cO_{2^i1^{2n-2i+1}},\bC)|_{x_j}\oplus
\bigoplus_{j\leq j'<i} 
\IC(\cO_{2^{j'}1^{2n-2j'+1}},\bC^{t_{j'k}^i})[\pm k]|_{x_j}.
\eeq
Since the stalk
$\IC(\cO_{2^i1^{2n-2i+1}},\bC)|_{x_j}$ is concentrated in degree 
$<-\dim\mO_{2^j1^{2n-2j+1}}$ 
and \linebreak
$\IC(\cO_{2^{j}1^{2n-2j+1}},\bC^{t_{jk}^i})[-k]$
is concentrated in degree $\geq-\dim\mO_{2^j1^{2n-2j+1}}$, 
the decomposition in (\ref{decomposition sigma})
implies the multiplicity numbers
$t_{jk}^i$ are uniquely determined by
the cohomology of $\sigma_i^{-1}(x_j)$ and the stalks 
$\IC(\cO_{2^{j'}1^{2n-2j'+1}},\bC^{t_{j'k}^i})|_{x_j}$
for $j<j'<i$. The numbers $(t_{jk}^i)'$ are determined, in the same manner, by the corresponding data.
Now since $t_{j'k}^i=(t_{j'k}^i)'$ for $j'>j$ (by induction)
the 
desired equality $t_{jk}^i=(t_{jk}^i)'$ follows from 
(\ref{equality for stalks 1)}).
\end{proof}

\section{Cohomology of Fano varieties}
\label{sec-Fano}
In this section we  compute the cohomology of Fano varieties of 
$k$-planes in  the smooth complete intersection of two quadrics in $\bP^{2n}$. We  denote these Fano varieties by $\on{Fano}_{k}^{2n}$.  Note that $\on{Fano}_{0}^{{2n}}$ is the smooth complete intersection of two quadrics in $\bP^{2n}$.

Let $\on{Gr}(k,n)$ denote the Grassmannian variety of $k$-dimensional subspaces in $\bC^n$. Let
$$g_{k,n}(q):=\sum \dim\,H^{2l}(\on{Gr}(k,n),\bC)\,q^l=\frac{\prod_{l=n-k+1}^n(1-q^l)}{\prod_{l=1}^k(1-q^l)}$$
be the Poincare polynomial of  $\on{Gr}(k,n)$.

Recall the monodromy representations  $ L_i$ which were defined in \S\ref{sec-the local systems}.
 The cohomology of the Fano variety $\on{Fano}_{i-1}^{2n}$ is described as follows.

 \begin{thm}\label{thm-fano}
We have
\beq\label{odd coho}
H^{2k+1}(\on{Fano}_{i-1}^{2n},\bC)=0,
\eeq
\beq\label{even coho}
H^{2k}(\on{Fano}_{i-1}^{2n},\bC)\cong\bigoplus_{j=0}^i L_j^{\oplus M_i(k,j)},
\eeq
{where $M_i(k,j)$ is the coefficient of $q^{k-j(n-i)}$ in $g_{i-j,2n-i-j}(q)$.}
\end{thm}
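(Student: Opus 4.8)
The plan is to realize $\on{Fano}_{i-1}^{2n}$ as the generic fiber $F_a$ of the proper map $\pi: F\to \fg_1$ introduced in \S\ref{non-trivial on 2^n} (taking $x=a\in\ars$), so that computing $H^*(\on{Fano}_{i-1}^{2n},\bC)$ amounts to computing the stalk at $a$ of $\pi_*\IC(F,\bC)$. Here one must be slightly careful: the parameter $n$ in \S\ref{non-trivial on 2^n} is tied to the orbit $\cO_{2^n1}$, whereas for general $i$ one works with the maps $\check\pi_{2^n1}$-analogues associated to the parabolic whose nilradical gives the relevant family; concretely I would use the family $F$ built from the pencil of quadrics and take $F_a$ to be the Fano variety of $(i-1)$-planes, which by Reeder \cite{Re} fibers over (a torsor for) $\Jac(C_a)$ only when $i=n$ but in general carries a cell-type decomposition. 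So the first step is to set up the correct geometric incidence variety $F^{(i)}$ whose general fiber is $\on{Fano}_{i-1}^{2n}$ and whose image under the projection is a nilpotent orbit closure of order-two type, exactly as in the general strategy displayed before Theorem~\ref{thm-fano}.

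\textbf{Key steps.} First I would decompose $\pi_*\IC(F^{(i)},\bC)$ via the decomposition theorem; because the image of $\pi$ is an order-two nilpotent orbit closure and by Theorem~\ref{FT of order 2 trivial}, Theorem~\ref{s}, Theorem~\ref{matching}, and Theorem~\ref{non-trivial even} we know precisely the Fourier transforms of all $\IC$-sheaves supported on such orbits, the only summands with full support are the $\IC(\fg_1,\mL_j)$ and $\IC(\fg_1,\mF_j)$. Second, I would apply the Fourier transform $\fF$ to this decomposition: since $\fF$ exchanges the behavior at the generic point with the behavior on the nilpotent cone, $\fF(\pi_*\IC(F^{(i)},\bC))$ is expressed in terms of $\IC(\cO_{2^k1^{2n-2k+1}},\bC)$, $\IC(\cO_{2^k1^{2n-2k+1}},\cE_k)$, and IC-sheaves supported at $0$. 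Third, I would extract the multiplicities: the multiplicity of each full-support $\IC(\fg_1,\mL_j)$ (resp.\ $\IC(\fg_1,\mF_j)$) in $\pi_*\IC(F^{(i)},\bC)$, graded by cohomological shift, is read off from the cohomology of a \emph{fiber over a nilpotent point} of the dual family — and those fibers are (quadric bundles over) isotropic Grassmannians $\on{OGr}$, whose Poincar\'e polynomials are products of the $g_{k,m}(q)$. This is where the Grassmannian Poincar\'e polynomial $g_{i-j,2n-i-j}(q)$ enters, and the shift $q^{k-j(n-i)}$ records the relevant degree offset $\dim\cO_{2^j1^{2n-2j+1}}$ versus the ambient dimension. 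Combining these, $H^{2k}(\on{Fano}_{i-1}^{2n},\bC)=\bigoplus_j L_j^{\oplus M_i(k,j)}$ where $L_j$ is the $\widetilde W$-representation attached to $\mL_j$ (note $\dim L_j = \dim\mL_j=\binom{2n+1}{j}$), and $M_i(k,j)$ is the claimed coefficient; the vanishing of odd cohomology follows because all fibers involved have cohomology concentrated in even degrees (Corollary~\ref{coro odd dimen vanishing} and the fact that $\on{OGr}$ has no odd cohomology).

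\textbf{Main obstacle.} The delicate point is the precise combinatorial bookkeeping of which nilpotent orbits $\cO_{2^j1^{2n-2j+1}}$ contribute and with which cohomological shifts — i.e.\ proving that the multiplicity generating function is exactly $g_{i-j,2n-i-j}(q)$ shifted by $q^{j(n-i)}$ rather than some other product of $q$-binomials. This requires an explicit description of the fibers of the dual map over each nilpotent stratum (an isotropic-Grassmannian computation in the spirit of Lemma~\ref{lem-decomposition pi{2n1}} and the fiber descriptions in \S\ref{matching for non trivial 2^n}), together with a careful matching of the $\cE_j$-summands: by the parity phenomenon in Theorem~\ref{non-trivial even}, the nontrivial local systems only occur for even $j$, and one must verify that these do not pollute the final answer — indeed the statement has $L_j$ for all $j=0,\dots,i$ with $\dim L_j=\binom{2n+1}{j}$, so the $\mF_j$-contributions must reorganize, via the relation $\binom{2n}{j}-\binom{2n}{j-2}+\binom{2n+1}{j-1}=\binom{2n+1}{j}$ used in \S\ref{sec orbits of order 2}, into the $L_j$. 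This bookkeeping, plus confirming that no IC-sheaf supported strictly inside the nilpotent cone (other than at $0$, which does not affect the generic stalk) contributes to the generic fiber, is the crux; the combinatorial identity that the multiplicities assemble correctly is precisely the combinatorial argument the authors credit to Dennis Stanton in the acknowledgements, so I expect that to be the technical heart of the proof.
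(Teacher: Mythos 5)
Your overall strategy (a dual pair of families, the Fourier transform, the decomposition theorem, multiplicities extracted from fibers over nilpotent strata) is indeed the strategy of the paper, but two of your concrete choices would derail the argument. First, the family you start from is the wrong one: the pencil family $F\subset\bP(W)$, $W=V\oplus\bC$, of \S\ref{non-trivial on 2^n} governs Fano varieties of linear spaces in the intersection of two quadrics in $\bP^{2n+1}$ (the odd-dimensional case, treated in~\cite{CVX}), not $\on{Fano}_{i-1}^{2n}\subset\bP^{2n}$; even for $i=n$ the latter is a finite set of $2^{2n}$ points, not a Jacobian torsor. The family actually used is $\check\sigma_i$, whose fiber over a regular semisimple $x$ is exactly $\on{Fano}_{i-1}^{2n}$, and whose Fourier dual is Reeder's resolution $\sigma_i$ of $\bar\cO_{2^i1^{2n-2i+1}}$. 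The fibers of $\sigma_i$ over $\cO_{2^j1^{2n-2j+1}}$ are plain isotropic Grassmannians $\on{OGr}(i-j,2n-2j+1)$ --- not quadric bundles over them --- and the component groups act trivially on their cohomology \eqref{property2}, so only trivial local systems occur in $\sigma_{i*}\bC[-]$ (see \eqref{decomposition for rho_i}); after Fourier transform (Theorem \ref{matching}) only the $\IC(\fg_1,\mL_j)$ appear. Consequently your step of admitting $\IC(\fg_1,\mF_j)$ summands and then ``reorganizing'' them into $L_j$'s via ${2n\choose j}-{2n\choose j-2}+{2n+1\choose j-1}={2n+1\choose j}$ is both unnecessary and unsound: the $\mF_j$ are irreducible local systems with infinite (Zariski-dense) monodromy, so no dimension identity can re-express them through the finite-monodromy $L_j$'s, and had they occurred in $R^\bullet\pi_{i*}\bC$ the theorem as stated would be false. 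The quadric-bundle fibers you describe belong to $\upsilon$ and $\widetilde\sigma_i$, which is precisely why those maps produce the $\cE_j$'s; their generic fibers are not $\on{Fano}_{i-1}^{2n}$.

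Second, the real content of the theorem, which your outline leaves open, is the determination of the multiplicities $t^i_{jk}$ in \eqref{decomposition for rho_i}: the theorem amounts to $\sum_k t^i_{j,2k}\,q^{\pm k}=q^{-(i-j)(n-i)}g_{i-j,2n-i-j}(q)$ together with the vanishing of $t^i_{jk}$ for odd $k$ (Lemma \ref{odd vanish}), which gives \eqref{odd coho}. These numbers cannot simply be ``read off'' from the fibers, because the summands of the decomposition theorem contribute to the same stalks; one has to solve the recursion \eqref{inductive formula} relating the Poincar\'e polynomial of $\on{OGr}(i,2n+1)$ to the unknown multiplicities and to the IC stalks at $0$. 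The paper does this by reducing $t^i_{jk}$ to $t^{i-j}_{0,k}$ for a smaller pair (Lemma \ref{lem-decomposition numbers reduction}), computing the stalk polynomials $f_i(q)=q^{-i(2n-i+1)/2}g_{[i/2],n}(q^2)$ via the $\mathfrak{sp}(2n)$ comparison (Theorem \ref{SO=SP}) and Lusztig's fake degrees \eqref{eqn-fj}, and then invoking the $q$-series identity \eqref{poincare poly1}, proved by a ${}_6\phi_5$ summation --- this is where Stanton's argument enters, to make the Grassmannian polynomials $g_{i-j,2n-i-j}(q)$ come out, not to reconcile $\mF_j$-contributions with $L_j$'s. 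So while you correctly locate the Fourier-transform mechanism and the combinatorial heart, the choice of family, the role assigned to the nontrivial local systems, and the missing stalk/fake-degree computation are genuine gaps.
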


\subsection{Fano varieties and resolutions for $\bar\cO_{2^i1^{2n-2i+1}}$}
We start with the following simple observation, which is a 
direct consequence of Theorem \ref{matching}.

Let $\pi:\Sigma\rightarrow\fg^{rs}_1$ be a family of smooth projective varieties over $\fg_1^{rs}$ and let 
$R^k\pi_*\bC$ be the corresponding local system on $\fg^{rs}_1$. Suppose that 
the Fourier transform of $\IC(\fg_1,R^k\pi_*\bC)$ is supported on $\bar{\cO}_{2^n1}$ and 
is given by 
\[\mathfrak F(\IC(\fg_1,R^k\pi_*\bC))=\bigoplus_{j=0}^n \IC(\cO_{2^j1^{2n-2j+1}},\bC^{ m_{kj}}).\]
Then the cohomology of the fiber $\Sigma_x:=\pi^{-1}(x)$ over $x\in\fg_1^{rs}$ 
satisfies 
\[H^k(\Sigma_x,\bC)\cong\bigoplus_{j=0}^n L_j^{\oplus m_{kj}}.\]
Moreover, the isomorphism above is compatible 
with the monodromy actions.

Let us apply this observation to the following situation. Consider  the maps  
\beqn
\check\sigma_i:\{(x,0\subset V_i\subset V_i^\p\subset \bC^{2n+1})\,|\,x\in\Lg_1,\ xV_i\subset V_i^\p\}\to\Lg_1.
\eeqn
Note that for $x\in\Lg_1^{rs}$, we have
$
\check\sigma_i^{-1}(x)\cong\on{Fano}_{i-1}^{2n},
$
 the Fano variety of $(i-1)$-planes in the smooth complete intersection of two quadrics $Q(v)=0$ and $\langle xv,v\rangle_Q=0$ in $\bP^{2n}$.
 
 Let us consider $\pi_i=\check{\sigma}_i|_{\check{\sigma}_i^{-1}(\fg_1^{rs})}$, which is a smooth family of Fano varieties, and consider the corresponding local system
$R^k\pi_{i*}\bC$. Recall that we have Reeder's resolutions $\sigma_i$  for $\bar\cO_{2^i1^{2n-2i+1}}$ (see  \S\ref{resolution}) and 
 \beq\label{decomposition for rho_i}
\sigma_{i*}\bC[i(2n+1-i)]=\bigoplus_{j=0}^i\bigoplus_{k=0}^{2(i-j)(n-i)} \IC(\cO_{2^{j}1^{2n-2j+1}},\bC^{t^{i}_{jk}})[\pm k].
\eeq
Since the Fourier transform of $\IC(\cO_{2^{j}1^{2n-2j+1}},\bC)$  
is  supported on all of $\fg_1$ for all $j$ (Theorem \ref{matching}), the 
equation
\beqn 
\fF(\check\sigma_{i*}\bC[-])\cong{\sigma}_{i*}\bC[-]
\eeqn
implies that
\begin{eqnarray*}\label{key emu}
&&\fF(\check\sigma_{i*}\bC[-])\cong\bigoplus_{k=0}^{4i(n-i)}\fF(\IC(\fg_1,R^k\pi_{i*}\bC)[-k+2i(n-i)])\\
&&\quad\cong
\bigoplus_{j=0}^i\bigoplus_{k=0}^{2(i-j)(n-i)} \IC(\cO_{2^{j}1^{2n-2j+1}},\bC^{t^{i}_{jk}})[\pm k].\nonumber
\end{eqnarray*}
here 
$
2i(n-i)=\dim\check{\sigma}_i^{-1}(x)=\dim\on{Fano}_{i-1}^{2n},
$ for $x\in\Lg_1^{rs}$.

Hence we see that 
$\fF(\IC(\fg_1,R^k\pi_{i*}\bC))$ is supported on $\bar\cO_{2^i1^{2n-2i+1}}$, and 
has the form 
\[\fF(\IC(\fg_1,R^k\pi_{i*}\bC))=\bigoplus_{j=0}^i \IC(\cO_{2^j1^{2n-2j+1}},\bC^{t^i_{j,|2i(n-i)-k|}}).\]
So by the observation above and Theorem \ref{matching}, we deduce that the cohomology of the Fano varieties $\on{Fano}_{i-1}^{2n}$
is given by 
\beq\label{eqn coho of fano odd}
H^k(\on{Fano}_{i-1}^{2n},\bC)\cong\bigoplus_{j=0}^i L_j^{\oplus t^i_{j,|2i(n-i)-k|}}.
\eeq

\subsection{The numbers $t_{jk}^{i}$}

In this subsection let us again use the notation $(t^{i}_{jk})_n$ for the numbers $(t^i_{jk})$ in \eqref{decomposition for rho_i} to indicate that the ambient symmetric pair is $(SL(2n+1),SO(2n+1))$.

\begin{lemma}\label{odd vanish}
We have $(t^i_{jk})_n=0$ for odd $k$.
\end{lemma}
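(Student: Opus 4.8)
\textbf{Proof proposal for Lemma \ref{odd vanish}.}

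The plan is to exploit the same parity phenomenon that has been used repeatedly in this paper: the resolution $\sigma_i$ has fibers whose odd cohomology vanishes, and one propagates this fact inductively down the chain of IC sheaves appearing in the decomposition of $\sigma_{i*}\bC[-]$. First I would recall from \S\ref{resolution} (equations \eqref{property1} and \eqref{eqn-fibers}) that for $x\in\mO_{2^j1^{2n-2j+1}}$ with $j\le i$ the fiber $\sigma_i^{-1}(x)$ is isomorphic to $\on{OGr}(i-j,2n-2j+1)$, an orthogonal Grassmannian, which has no odd cohomology. Moreover $\dim\mO_{2^i1^{2n-2i+1}} = i(2n+1-i)$ is always even, so the shift $\sigma_{i*}\bC[i(2n+1-i)]$ preserves the parity of cohomological degree. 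Hence $\mH^{\mathrm{odd}}_x(\sigma_{i*}\bC[-]) = H^{\mathrm{odd}}(\sigma_i^{-1}(x),\bC)=0$ for all $x$.

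Next I would run an induction on $i$ (with the base case $i=1$, where $(t^1_{0k})_n = \delta_{k,0}\cdot(\text{something concentrated in even degree})$ is immediate). Assuming the statement for all $i'<i$, so that $\mH^{\mathrm{odd}}_x\IC(\cO_{2^{j}1^{2n-2j+1}},\bC)=0$ for $j<i$ and all $x$ (this is exactly Corollary \ref{coro odd dimen vanishing}(1), or can be re-derived from the inductive hypothesis), I would then carry out a secondary induction on $j$ decreasing from $i$, looking at the stalk at $x_j\in\mO_{2^j1^{2n-2j+1}}$ of the decomposition
\[
(\sigma_i)_*\bC[-]|_{x_j} = \IC(\cO_{2^i1^{2n-2i+1}},\bC)|_{x_j}\oplus\bigoplus_{j\le j'<i}\IC(\cO_{2^{j'}1^{2n-2j'+1}},\bC^{(t^i_{j'k})})[\pm k]|_{x_j}.
\]
By the degree-separation argument already used in the proof of Lemma \ref{decomposition n} (the stalk of $\IC(\cO_{2^i1^{2n-2i+1}},\bC)$ at $x_j$ lives in degrees $<-\dim\mO_{2^j1^{2n-2j+1}}$, while the summand $\IC(\cO_{2^{j}1^{2n-2j+1}},\bC^{t^i_{jk}})[-k]$ contributes in degree $\ge -\dim\mO_{2^j1^{2n-2j+1}}$), the number $(t^i_{jk})_n$ is determined by the cohomology of $\sigma_i^{-1}(x_j)$ together with the stalks $\mH^\bullet_{x_j}\IC(\cO_{2^{j'}1^{2n-2j'+1}},\bC)$ for $j<j'<i$. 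All of these vanish in odd degree — the fiber cohomology by the first paragraph, and the stalks by Corollary \ref{coro odd dimen vanishing}(1). Therefore the contribution of the $\IC(\cO_{2^j1^{2n-2j+1}},\bC^{(t^i_{jk})})[\pm k]$ summands to $\mH^{\mathrm{odd}}_{x_j}(\sigma_{i*}\bC[-])$ must vanish, which forces $(t^i_{jk})_n=0$ whenever $k$ is odd (since $\bC^{(t^i_{jk})}[\pm k]$ placed in the IC-sheaf, whose stalk at its generic point sits in even degree relative to $\dim\mO_{2^j1^{2n-2j+1}}$, would otherwise produce an odd-degree stalk class at $x_j$).

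I expect the only mildly delicate point to be bookkeeping the parities: one must check that $\dim\mO_{2^j1^{2n-2j+1}}=j(2n+1-j)$ is even for every $j$ (it is: $j$ and $2n+1-j$ have opposite parities, so their product is even) so that "odd cohomological degree" is a well-defined notion stable under all the shifts $[\pm k]$ and $[-\dim\mO]$ appearing, and that the shift normalization $\IC(\cdots)=\IC(\bar\cO,\cdot)$ with the local system placed in degree $-\dim\cO$ is applied consistently. Once the parities are pinned down, the argument is a routine instance of the induction machinery already developed in \S\ref{sec-stalks}, and no new geometric input beyond the evenness of $H^*(\on{OGr}(i-j,2n-2j+1),\bC)$ is needed.
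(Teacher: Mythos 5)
Your proposal is correct and rests on exactly the same ingredients as the paper's proof: take the stalk at $x_j\in\cO_{2^j1^{2n+1-2j}}$ of the decomposition \eqref{decomposition for rho_i}, note that $\sigma_i^{-1}(x_j)\cong\on{OGr}(i-j,2n-2j+1)$ has no odd cohomology and that $i(2n+1-i)$ and $j(2n+1-j)$ are even, so a summand $\IC(\cO_{2^j1^{2n-2j+1}},\bC^{t^i_{jk}})[\pm k]$ with $k$ odd would contribute a nonzero odd-degree stalk class, a contradiction. The double induction and the appeal to Corollary \ref{coro odd dimen vanishing}(1) are harmless but superfluous: since the decomposition is a direct sum, no cancellation can occur, and the contradiction at $x_j$ already forces $(t^i_{jk})_n=0$ directly, which is precisely the paper's one-paragraph argument.
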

\begin{proof}
 In the decomposition \eqref{decomposition for rho_i}, we take the stalk $\mH^{l}_{x_j}$ on both sides  for odd $l$, where $x_j\in\cO_{2^j1^{2n+1-2j}}$. Since $i(2n-i+1)$ is even and $H^{\text{odd}}(\sigma_i^{-1}(x_j),\bC)=0$, we have that
$\mH^{l}_{x_j}\sigma_{i*}\bC[i(2n+1-i)]=0$ for all odd $l$. Suppose that there exists $k$ odd such that $t^i_{jk}\neq 0$, then there exists an odd $k$ such that $\mH^{l}_{x_j}\IC(\cO_{2^{j}1^{2n-2j+1}},\bC^{t^{i}_{jk}})[\pm k]\neq 0$ (note that $\mH^{-j(2n-j+1)}_{x_j}\IC(\cO_{2^{j}1^{2n-2j+1}},\bC)\neq 0$). This is a contradiction. Thus the lemma is proved.
\end{proof}

Recall from~\eqref{lem-decomposition numbers reduction} that we have $(t_{l,k}^i)_n=(t_{l-j,k}^{i-j})_{n-j}$ for $j\leq l< i$. This implies, in particular, that $(t^i_{jk})_n=(t^{i-j}_{0,k})_{n-j}$ for $j\geq 1$. Since $(t_{i,k}^i)_n=\delta_{0,k}$, the determination of $(t^{i}_{jk})_n$ are reduced to that of $(t^i_{0,k})_n$. In the following we describe how to determine the latter numbers inductively. The dimensions of the stalks $\mH^{k}_0\IC(\cO_{2^i1^{2n+1-2i}},\bC)$ can be determined simultaneously (in \S\ref{ssec-fj} we determine these dimensions directly, see \eqref{dim stalks}). 

Recall that $\mH^{\text{odd}}_0\IC(\cO_{2^{j}1^{2n-2j+1}},\bC)=0$ (see Corollary \ref{coro odd dimen vanishing}). Note also that \linebreak$\mH^{2k}_0\IC(\cO_{2^{j}1^{2n-2j+1}},\bC)\neq 0$ implies that $-\dim\cO_{2^{j}1^{2n-2j+1}}\leq 2k\leq -1$. Let us write 
$$m_j=(\dim\cO_{2^{j}1^{2n-2j+1}})/2=j(2n-j+1)/2,$$
$$f_j(q)=\sum_{k=-m_j}^{-1}(\dim \mH^{2k}_0\IC(\cO_{2^{j}1^{2n-2j+1}},\bC))\,q^k,$$
$$
\text{ and }\ \ og_{j,2n+1}(q)=\sum_{k=0}^{d_j}\dim H^{2k}(\on{OGr}(j,2n+1),\bC)\,q^k,$$
where $d_j=\dim\on{OGr}(j,2n+1)=j(4n-3j+1)/2$. The polynomials $og_{j,2n+1}(q)$ are known, i.e. 
\beqn
og_{i,2n+1}(q)=\frac{(1-q^{2(n-i+1)})(1-q^{2(n-i+2)})\cdots(1-q^{2n})}{(1-q)(1-q^2)\cdots(1-q^i)}.
\eeqn
Note that $\sigma_i^{-1}(0)\cong\on{OGr}(i,2n+1)$. In view of Lemma \ref{odd vanish}, the decomposition \eqref{decomposition for rho_i} implies that
\beq\label{inductive formula}
og_{i,2n+1}(q)\,q^{-i(2n-i+1)/2}=f_i(q)+\sum_{j=1}^{i-1}f_j(q)\,\sum_{k=0}^{(i-j)(n-i)}(t^i_{j,2k})_n\,q^{\pm k}+\sum_{k=0}^{i(n-i)}(t^i_{0,2k})_n\,q^{\pm k}.
\eeq
Note that $f_0(q)=1$. It is easy to check using \eqref{inductive formula} that 
\beqn
(t^1_{0,2k})_n=1\text{ for }0\leq k\leq n-1,\ (t^1_{0,2k})_n=0\text{ otherwise, and }f_1(q)=q^{-n}. 
\eeqn
This completes the determination of the numbers $(t^i_{jk})_n$ and $f_i(q)$'s for $n=1$, and for all $n$ and $i=1$. By induction on $n$, we can assume that the numbers $(t^i_{jk})_{n'}$ have been determined for all $n'<n$. This implies that $(t^i_{jk})_n$ for $j\geq 1$ have been determined. We determine now the numbers $(t^i_{0k})_n$ by induction on $i$. We can assume that all $f_{i'}(q)$, $i'<i$, has been determined. Note that $f_i(q)$ is concentrated in negative degrees. Thus we can determine the numbers $(t^i_{0,2k})_n$ from \eqref{inductive formula} and then determine $f_i(q)$. We have shown that
\beq\label{uniquely determined}
\text{the equations \eqref{inductive formula} determine $f_i(q)$'s and $(\sum_{k=0}^{i(n-i)}(t^i_{0,2k})_n\,q^{\pm k})$'s uniquely}.
\eeq

\subsection{The functions $f_i(q)$} \label{ssec-fj}In fact, the functions $f_i(q)$ can be determined directly making use of 
our identification of $\mH^k_0\IC(\cO_{2^i1^{2n+1-2i}},\bC)$ with $\mH^k_0\IC(\cO'_{2^i1^{2n-2i}},\bC)$ (see Theorem \ref{SO=SP}), where $\cO'_{2^i1^{2n-2i}}\subset\cN_{\mathfrak{sp}(2n)}$, and the classical Springer correspondence.
\begin{lemma}
We have that
\beq\label{eqn-fj}
f_i(q)=q^{-i(2n-i+1)/2}\,g_{[i/2],n}(q^2),
\eeq
where $[i/2]$ is the integer part of $i/2$. Namely
\beq\label{dim stalks}
\begin{gathered}
\dim\mathcal{H}^{k-i(2n-i+1)}_0\IC({\cO}_{2^i\,1^{2n-2i+1}},\bC)=\dim H^{k/2}(\on{Gr}([\frac{i}{2}],n),\bC)\text{ if }k\equiv 0 
\ (\on{mod} 4),\\
 \dim\mathcal{H}^{k-i(2n-i+1)}_0\IC({\cO}_{2^i\,1^{2n-2i+1}},\bC)=0\text{ otherwise}.
 \end{gathered}
 \eeq
\end{lemma}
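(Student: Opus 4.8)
The plan is to transport the computation to the nilpotent cone of $\mathfrak{sp}(2n)$ by Theorem~\ref{SO=SP} and then to recognize the answer as a fake degree for the Weyl group $W_n$ of type $C_n$. Taking $j=0$ in Theorem~\ref{SO=SP} (so $x=x'=0$) gives $\mH^k_0\IC(\cO_{2^i1^{2n-2i+1}},\bC)\cong\mH^k_0\IC(\cO'_{2^i1^{2n-2i}},\bC)$, where $\cO'_{2^i1^{2n-2i}}\subset\cN_{\mathfrak{sp}(2n)}$, so it is enough to compute the graded dimension of the stalk at $0$ of $\IC(\cO'_{2^i1^{2n-2i}},\bC)$. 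For this I would invoke the classical Springer correspondence for $Sp(2n)$: the fibre of the Springer resolution $\varphi$ over $0$ is the full flag variety $\cB$ of $Sp(2n)$, so taking stalks at $0$ in the decomposition \eqref{sp resolution} yields an isomorphism of graded $W_n$-modules $H^{*}(\cB,\bC)[\dim\cN]\cong\bigoplus_{(\cO,\mL)}\mH^{*}_0\IC(\cO,\mL)\otimes V_{\cO,\mL}$; since distinct Springer pairs carry non-isomorphic irreducibles, $\dim\mH^{j}_0\IC(\cO'_{2^i1^{2n-2i}},\bC)$ equals the multiplicity of $V_{(\cO'_{2^i1^{2n-2i}},\bC)}$ in $H^{j+\dim\cN}(\cB,\bC)$. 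Using $\dim\cN_{\mathfrak{sp}(2n)}=2n^{2}$, this says exactly that $f_i(q)=q^{-n^{2}}R(q)$, where $R(q)=\sum_{\ell}\bigl[\,H^{2\ell}(\cB,\bC):V_{(\cO'_{2^i1^{2n-2i}},\bC)}\,\bigr]q^{\ell}$ is the fake degree polynomial of that representation.

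Next I would make the representation explicit. By \eqref{spc-1}, with $a=[i/2]$, one has $V_{(\cO'_{2^i1^{2n-2i}},\bC)}=(1^a)(1^{n-a})$ if $i$ is even and $=(1^{n-a})(1^a)$ if $i$ is odd. Hence the lemma reduces to the combinatorial identity that the fake degree of the $W_n$-representation $(1^a)(1^{n-a})$ (resp.\ $(1^{n-a})(1^a)$) equals $q^{\,n^{2}-i(2n-i+1)/2}\,g_{a,n}(q^{2})$, that is, a power of $q$ times the Poincar\'e polynomial of $\on{Gr}(a,n)$ in the variable $q^{2}$. I would establish this either from the classical product formula for fake degrees of irreducibles of $W_n$ specialized to bipartitions of the form $(1^{a})(1^{n-a})$, or by working directly in the coinvariant algebra of $W_n$ (for instance $\prod_{p<r}(x_p^{2}-x_r^{2})$, of degree $n^{2}-n$, realizes the linear character $(1^{n})(\emptyset)$, which is the $i=1$ case); in both approaches the crux is a routine but somewhat intricate $q$-binomial identity of the kind used elsewhere in this section. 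The formula passes the obvious checks: for $i=0$ the representation is the sign character $(\emptyset)(1^{n})$, with fake degree $q^{n^{2}}$, so $f_0(q)=1$; for $i=1$ one gets $f_1(q)=q^{-n}$; and setting $q=1$ gives $f_i(1)=\binom{n}{a}=\dim V_{(\cO'_{2^i1^{2n-2i}},\bC)}$, matching the Euler characteristic \eqref{eu1}. Unwinding the grading shift then gives $f_i(q)=q^{-i(2n-i+1)/2}g_{[i/2],n}(q^{2})$, and reading off coefficients gives \eqref{dim stalks}, the vanishing outside cohomological degrees $\equiv -i(2n-i+1)\pmod{4}$ being just the fact that $g_{a,n}(q^{2})\in\bZ[q^{2}]$ together with the parity of $i(2n-i+1)/2$.

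The step I expect to be the main obstacle is the fake degree calculation itself: extracting the clean closed form $q^{\bullet}g_{a,n}(q^{2})$ from the general type-$C$ fake degree formula for the entire family $(1^{a})(1^{n-a})$ is a genuine $q$-binomial identity, and pinning down the exponent of $q$ so that it equals $n^{2}-i(2n-i+1)/2$ uniformly in both parities of $i$ is where the bookkeeping is most delicate. A self-contained alternative would be to compute $\mH^{*}_0\IC(\cO'_{2^i1^{2n-2i}},\bC)$ from the resolution $\tau_i$ of \S\ref{resolution}, whose fibre over $0$ is $\on{SpGr}(i,2n)$, by solving the resulting decomposition-theorem recursion (the symplectic analogue of \eqref{inductive formula}); but this only trades one hyperoctahedral identity for an equivalent one. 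In any case, once $f_i(q)$ is established, the uniqueness statement \eqref{uniquely determined} then pins down the remaining numbers $(t^i_{0,2k})_n$ via \eqref{inductive formula}.
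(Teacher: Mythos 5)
Your proposal is correct and follows essentially the same route as the paper: reduce to the stalk at $0$ of $\IC(\cO'_{2^i1^{2n-2i}},\bC)$ via Theorem~\ref{SO=SP}, interpret its graded dimensions as multiplicities in $H^{*}(\cB,\bC)$ (fake degrees) via the Springer decomposition \eqref{sp resolution} and \eqref{spc-1}, and then identify the fake degree of $(1^{[i/2]})(1^{n-[i/2]})$ (resp.\ its transpose for odd $i$) with $q^{n^{2}-i(2n-i+1)/2}g_{[i/2],n}(q^{2})$. The only difference is that the paper obtains this closed form by citing Lusztig's explicit fake-degree computation \cite{Lu4}, whereas you leave it as a $q$-binomial identity to be checked; with that citation your argument coincides with the paper's.
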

\begin{proof}
We have
\begin{eqnarray*}
&&\dim\mathcal{H}^{k}_0\IC({\cO}_{2^i\,1^{2n-2i+1}},\bC)=\dim\mathcal{H}^{k}_0\IC({\cO}_{2^i\,1^{2n-2i}}',\bC)\stackrel{\eqref{sp resolution}}=[V_{(\cO_{2^i1^{2n-2i}}',\bC)}:H^{k+2n^2}(\mB,\bC)],
\end{eqnarray*} 
where $V_{(\cO_{2^i1^{2n-2i}}',\bC)}$ is the representation of $W_n$ attached to the pair $(\cO_{2^i1^{2n-2i}}',\bC)$ under Springer correspondence (see \S\ref{sec-proof of proposition cc}). The numbers 
$[V_{(\cO_{2^i1^{2n-2i}}',\bC)}:H^{k+2n^2}(\mB,\bC)]$ are the so-called fake degrees and they have been computed explicitly by Lusztig in \cite{Lu4}. In fact, let us write
\beqn
P_i(q)=\sum[V_{(\cO_{2^i1^{2n-2i}}',\bC)}:H^{2k}(\mB,\bC)]\,q^k.
\eeqn
Using \eqref{spc-1} and \cite{Lu4}, we see that
\beqn
P_i(q)=q^{n^2-ni+i(i-1)/2}g_{[i/2],n}(q^2).
\eeqn
Now $\dim\mathcal{H}^{k-i(2n-i+1)}_0\IC({\cO}_{2^i\,1^{2n-2i+1}},\bC)$ is the coefficient of $q^{\frac{k-i(2n-i+1)+2n^2}{2}}$ in $P_i(q)$, which is the coefficient of $q^{k/2}$ in $g_{[i/2],n}(q^2)$. The equation \eqref{dim stalks} follows. 
\end{proof}

\subsection{Proof of  Theorem \ref{thm-fano}}

The equation \eqref{odd coho} in the theorem follows from \eqref{eqn coho of fano odd} and  Lemma \ref{odd vanish}. The equation \eqref{even coho} in the theorem follows from \eqref{eqn coho of fano odd} and the following statement about the numbers $t^i_{j,2k}$.

\begin{proposition}
We have
$$\sum_{k=0}^{(i-j)(n-i)}(t^{i}_{j,2k})_{n}q^{\pm k}=q^{-(i-j)(n-i)}g_{i-j,2n-i-j}(q).$$
\end{proposition}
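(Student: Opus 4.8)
The plan is to prove the proposition by induction on $i$ (with $n$ allowed to vary), reducing it first to the case $j=0$ and then to a single closed-form identity among Gaussian binomials, which is checked combinatorially.

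\emph{Step 1: reduction to $j=0$.} By Lemma~\ref{lem-decomposition numbers reduction} we have $(t^i_{j,k})_n=(t^{i-j}_{0,k})_{n-j}$ for $1\le j\le i-1$, while $(t^i_{i,k})_n=\delta_{0,k}$. For $j=i$ the asserted formula reads $1=g_{0,2n-2i}(q)$, which is trivially true. For $1\le j\le i-1$ the formula for the triple $(i,j,n)$ coincides with the formula for the triple $(i-j,0,n-j)$: indeed $(t^i_{j,2k})_n=(t^{i-j}_{0,2k})_{n-j}$, and since $(i-j)\big((n-j)-(i-j)\big)=(i-j)(n-i)$ and $2(n-j)-(i-j)=2n-i-j$, we get $q^{-(i-j)((n-j)-(i-j))}\,g_{i-j,\,2(n-j)-(i-j)}(q)=q^{-(i-j)(n-i)}\,g_{i-j,2n-i-j}(q)$. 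As $i-j<i$, induction on $i$ (the base case $i\le 1$ being the direct computation recorded right after \eqref{inductive formula}) reduces the whole proposition to the single statement
\[ \sum_{k=0}^{i(n-i)}(t^i_{0,2k})_n\,q^{\pm k}=q^{-i(n-i)}\,g_{i,2n-i}(q). \]

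\emph{Step 2: reduction to a $q$-identity.} In the recursion \eqref{inductive formula} substitute the known value $f_j(q)=q^{-j(2n-j+1)/2}g_{[j/2],n}(q^2)$ from \eqref{eqn-fj}, the explicit product formula for $og_{i,2n+1}(q)$, and, for $1\le j\le i-1$, the inductive value $\sum_k(t^i_{j,2k})_n q^{\pm k}=q^{-(i-j)(n-i)}g_{i-j,2n-i-j}(q)$ from Step~1. Multiplying \eqref{inductive formula} by $q^{i(2n-i+1)/2}$ and simplifying the powers of $q$ — a short computation shows the exponent attached to the $j$-th term collapses to $\binom{i-j+1}{2}$ and the one attached to the desired term $g_{i,2n-i}(q)$ to $\binom{i+1}{2}$ — one finds that the $j=0$ statement of Step~1 is equivalent to
\[ og_{i,2n+1}(q)=\sum_{j=0}^{i} q^{\binom{i-j+1}{2}}\,g_{[j/2],n}(q^2)\,g_{i-j,\,2n-i-j}(q), \]
where the $j=i$ summand is $g_{[i/2],n}(q^2)$ and the $j=0$ summand is $q^{\binom{i+1}{2}}g_{i,2n-i}(q)$. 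By the uniqueness recorded in \eqref{uniquely determined} it suffices to verify this identity.

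\emph{Step 3: proof of the $q$-identity.} Writing $1-q^{2l}=(1-q^l)(1+q^l)$ in the product formula for $og_{i,2n+1}(q)$ gives $og_{i,2n+1}(q)=g_{i,n}(q)\prod_{l=n-i+1}^{n}(1+q^l)$. Interpreting $g_{a,b}(q)$ as the generating function $\sum_{\mu\subseteq a\times(b-a)}q^{|\mu|}$ for partitions in a box, $g_{[j/2],n}(q^2)$ as the same with all parts doubled, and $\prod_{l=n-i+1}^{n}(1+q^l)$ as the generating function for subsets of $\{n-i+1,\dots,n\}$, one proves the identity by a weight-preserving bijection that splits a pair (partition in an $i\times(n-i)$ box, subset $S$) according to a suitable invariant $j=j(S,\mu)$; alternatively one argues directly by induction on $i$, isolating the contribution of the index $n$ and applying the Pascal relations $\binom{m}{a}_q=\binom{m-1}{a}_q+q^{m-a}\binom{m-1}{a-1}_q$. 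This is the combinatorial argument supplied by Stanton, and it is the main obstacle: all the geometric and representation-theoretic input — the recursion \eqref{inductive formula}, the fibers of $\sigma_i$, and the stalk values $f_j$ — is already available, so the whole difficulty is concentrated in this single $q$-identity. Once it is established, reversing the algebra of Step~2 yields $\sum_k(t^i_{0,2k})_n q^{\pm k}=q^{-i(n-i)}g_{i,2n-i}(q)$, and then Step~1 gives the full proposition.
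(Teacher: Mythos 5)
Your Steps 1 and 2 are correct and are exactly the paper's own reduction: using Lemma \ref{lem-decomposition numbers reduction} together with the uniqueness statement \eqref{uniquely determined}, the proposition is equivalent to checking that the claimed closed forms, together with the value of $f_j(q)$ from \eqref{eqn-fj}, satisfy the recursion \eqref{inductive formula}, and your exponent bookkeeping (the collapse to $q^{\binom{i-j+1}{2}}$, respectively $q^{\binom{i+1}{2}}$) correctly identifies this with the identity \eqref{poincare poly1}. Up to that point you have reproduced the paper's argument.

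The gap is in Step 3, which is precisely where all the content lies (as you yourself note). You do not prove \eqref{poincare poly1}: the ``weight-preserving bijection'' is never constructed -- the invariant $j(S,\mu)$ is not defined, and it is not clear what it should be, since the summand pairs $g_{[j/2],n}(q^2)$ (which is the same for two consecutive values of $j$) with the triangular powers $q^{\binom{i-j+1}{2}}$ and with boxes $g_{i-j,2n-i-j}$ of constant width $2n-2i$; and the alternative ``induction on $i$ via Pascal relations'' is asserted but not carried out, and is not obviously routine for the same reason (one must first merge the even and odd $j$ contributions before anything telescopes). The paper's proof of \eqref{poincare poly1} is a genuine computation: after combining the $j$ even/odd terms into a single sum over $j\leq[i/2]$, the sum is recognized as a $c\to\infty$ limit of a terminating very-well-poised ${}_6\phi_5$ series, which is then evaluated by the summation formula (II.21) of Gasper--Rahman; this is the input credited to Stanton. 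So while your reduction is sound, the key $q$-identity remains unproven in your write-up, and the proposal as it stands does not constitute a complete proof.
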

In view of \eqref{uniquely determined},   
the proposition above follows from \eqref{inductive formula}, \eqref{eqn-fj}, and the following equation
\beq\label{poincare poly1}
og_{i,2n+1}(q)=
\sum_{j=0}^{i}q^{(i-j)(i-j+1)/2}\,g_{[j/2],n}(q^2)\,g_{i-j,2n-i-j}(q),\text{ where $g_{0,n}(q)=1$.}
\eeq

\begin{proof}[Proof of \eqref{poincare poly1}]
The proof given here was kindly supplied to us by Dennis Stanton.

Define $(A;q)_l:=\prod_{k=0}^{l-1}(1-Aq^k)$ and
$
\displaystyle{{n\brack j}_q:=\frac{(q;q)_n}{(q;q)_j(q;q)_{n-j}}.}
$
Let us write $$S=\sum_{j=0}^{i}q^{(i-j)(i-j+1)/2}\,g_{[j/2],n}(q^2)\,g_{i-j,2n-i-j}(q).$$
We have 
$$S=\sum_{j=0}^{[i/2]}q^{(i-2j-1)(i-2j)/2}\,{n\brack j}_{q^2}(\,{2n-i-2j-1\brack i-2j-1}_q+q^{i-2j}{2n-i-2j\brack i-2j}_q)
$$
$$=\frac{q^{\binom{i}{ 2}}\,(q;q)_{2n-i-1}}{(q;q)_{2n-2i}\,(q;q)_i}\,\sum_{j=0}^{[i/2]}\,(-1)^j\,\frac{(q^{-2n};q^2)_j\,(q^{-i};q)_{j}\,(q^{1-i};q)_j\,q^{j(j+2i-2n+4)}}{(q^2;q^2)_j\,(q^{-2n+i+1};q^2)_{j}\,(q^{-2n+i+2};q^2)_{j}}\,(1-q^{2n-4j}).$$
Consider the terminating very well-poised basic hypergeometric series which, by definition, is given by
$$
{}_6\phi_5\left(\begin{array}{cc}a,\,q^2a^{1/2},\,-q^2a^{1/2},\ b,\ c,\ q^{-2m}\\\ a^{1/2},\ -a^{1/2},\,aq^2/b,\ aq^2/c,\ aq^{2m+2}\end{array};\, q^2,\ \frac{aq^{2m+2}}{bc}\right)
$$
$$
=\sum_{j=0}^{\infty}\frac{(a;q^2)_j\,(q^2a^{1/2};q^2)_j\,(-q^2a^{1/2};q^2)_j\, (b;q^2)_j\,( c;q^2)_j\, (q^{-2m};q^2)_j}{(q^2;q^2)_j\, (a^{1/2};q^2)_j\,( -a^{1/2};q^2)_j\,(aq^2/b;q^2)_j\, (aq^2/c;q^2)_j\,( aq^{2m+2};q^2)_j}(\frac{aq^{2m+2}}{bc})^j.
$$
Note that if $m>0$, then $(q^{-2m};q^2)_j=0$ for $j>m$ and thus the  sum above is finite.  
Take 
$$a=q^{-2n};\  m=i/2\text{ and }b=q^{1-i}\text{ for even }i; \text{ and }m=(i-1)/2,\ b=q^{-i}\text{ for odd }i.$$
One checks that 
$$S=(1-q^{2n})\frac{q^{\binom{i}{ 2}}(q;q)_{2n-i-1}}{(q;q)_{2n-2i}\,(q;q)_i}\,\lim_{c\to\infty}{}_6\phi_5\left(\begin{array}{cc}a,\,q^2a^{1/2},\,-q^2a^{1/2},\ b,\ c,\ q^{-2m}\\\ a^{1/2},\ -a^{1/2},\,aq^2/b,\ aq^2/c,\ aq^{2m+2}\end{array};\, q^2,\ \frac{aq^{2m+2}}{bc}\right).
$$
Now use the summation formula (see \cite[Appendix (II.21)]{GR})
\beqn
{}_6\phi_5\left(\begin{array}{cc}a,\,q^2a^{1/2},\,-q^2a^{1/2},\ b,\ c,\ q^{-2m}\\\ a^{1/2},\ -a^{1/2},\,aq^2/b,\ aq^2/c,\ aq^{2m+2}\end{array};\, q^2,\ \frac{aq^{2m+2}}{bc}\right)=\frac{(aq^2;q^2)_m\,(aq^2/bc;q^2)_m}{(aq^2/b;q^2)_m\,(aq^2/c;q^2)_m}
\eeqn
we get
\beqn
\lim_{c\to\infty}{}_6\phi_5\left(\begin{array}{cc}a,\,q^2a^{1/2},\,-q^2a^{1/2},\ b,\ c,\ q^{-2m}\\\ a^{1/2},\ -a^{1/2},\,aq^2/b,\ aq^2/c,\ aq^{2m+2}\end{array};\, q^2,\ \frac{aq^{2m+2}}{bc}\right)=\frac{(aq^2;q^2)_m}{(aq^2/b;q^2)_m}.
\eeqn
Thus 
\beqn
S=(1-q^{2n})\,\frac{q^{\binom{i}{ 2}}\,(q;q)_{2n-i-1}\,(aq^2;q^2)_m}{(q;q)_{2n-2i}\,(q;q)_i\,(aq^2/b;q^2)_m}={og}_{i,2n+1}(q).
\eeqn
\end{proof}

\begin{example}[Cohomology of $\on{Fano}_1^{2n}$]

The cohomology of $\on{Fano}_1^{2n}$, the Fano variety of lines in the smooth complete intersection of two quadrics in $\bP^{2n}$,  
can be described as follows: $$H^{2k+1}(\on{Fano}_1^{2n},\bC)=0$$
\beqn
H^{2(4n-8-k)}(\on{Fano}_1^{2n},\bC)=H^{2k}(\on{Fano}_1^{2n},\bC)=\left\{\begin{array}{ll}\bC^{[\frac{k+2}{2}]}&\text{ if }0\leq k\leq n-3\\\bC^{[\frac{k+2}{2}]}\oplus L_1&\text{ if }n-2\leq k\leq 2n-5\\\bC^{[\frac{k+2}{2}]}\oplus L_1\oplus L_2&\text{ if }k=2n-4.\end{array}\right.
\eeqn

\end{example}

\end{document}